\newcommand{\betterbar}[1]{\overset{\rule[-0.1ex]{0.55em}{0.4pt}}{#1}}
\newcommand{\calJbar}{\makebox[0pt][l]{\rule[2ex]{0.85em}{0.4pt}}{\mathcal{J}}}
\DeclareMathOperator*{\argmin}{argmin}
\begin{document}
	\title{On some perturbation properties of nonsmooth optimization on Riemannian manifolds with applications}
	\author{Yuexin Zhou         \and
		Chao Ding      \and
		Yangjing Zhang 
	}
	\institute{Y.X. Zhou \at
		Engineering Systems and Design, Singapore University of Technology and Design, Singapore, Singapore.\\
        Part of this work was completed while the author was with the Institute of Applied Mathematics, Academy of Mathematics and Systems Science, Chinese Academy of Sciences, Beijing, P.R. China, School of Mathematical Sciences, University of Chinese Academy of Science, Beijing, P.R. China.\\ 
		\email{yuexin\_zhou@sutd.edu.sg}   
		\and
		C. Ding \at
		Institute of Applied Mathematics, Academy of Mathematics and Systems Science, Chinese Academy of Sciences, Beijing,  P.R. China.\\ 
		\email{dingchao@amss.ac.cn}
		\and
		Y.J. Zhang      \at
		Institute of Applied Mathematics, Academy of Mathematics and Systems Science, Chinese Academy of Sciences, Beijing,  P.R. China.\\ 
		\email{yangjing.zhang@amss.ac.cn}
	}
	
	\date{This version: Sep 30, 2025}

	\maketitle

\begin{abstract}

This paper presents a perturbation analysis framework for nonsmooth optimization on connected Riemannian manifolds to bridge the gap between the rapid development of algorithmic approaches and a robust theoretical foundation. Using tangent-space local models, we transport core notions from Euclidean variational analysis, such as strong regularity, the Aubin property, and isolated calmness of the Karush–Kuhn–Tucker (KKT) solution mapping, to the manifold setting. Furthermore, we introduce the manifold (strong) variational sufficiency and show that its strong version is intrinsic, i.e., independent of the chosen retraction, and for polyhedral, second-order cone, and semidefinite programs, it coincides with the manifold strong second-order sufficient condition. These insights yield concrete algorithmic consequences. We show that the Riemannian sequential quadratic programming achieves local superlinear and, under mild additional assumptions, quadratic convergence without strict complementarity, while the Riemannian Augmented Lagrangian Method attains R-linear convergence even when Lagrange multipliers are nonunique. Moreover, the proposed condition guarantees positive definiteness of the generalized Hessians associated with the augmented Lagrangian, enabling superlinear semismooth Newton steps in inner solves. Numerical experiments on robust matrix completion and compressed modes validate the theoretical predictions.

	\keywords{nonsmooth optimizations \and Riemannian manifold \and perturbation analysis \and local convergence rate}
\subclass{	90C30 \and 90C46 \and 49J52 \and 65K05 }
\end{abstract}

%
%

\section{Introduction}
Consider the following nonsmooth optimization problems on Riemannian manifolds 
\begin{equation}\label{eq:problem-mani}
\begin{array}{ll}
\min & f(x)+\theta(g(x)) \\
\text { s.t. }
& x\in \mathcal{M},
\end{array}
\end{equation}
where $\mathcal{M}$ is a connected Riemannian manifold, $f:\mathcal{M} \rightarrow \mathbb{R}$ and $g:\mathcal{M} \rightarrow \mathbb{Y}$ are continuously differentiable functions, $\mathbb{Y}$ is a Euclidean space equipped with a scalar
product $\langle\cdot, \cdot\rangle$ and its induced
norm $\|\cdot\|$,  $\theta : \mathbb{Y}\rightarrow (-\infty, \infty]$ is a proper closed convex function. If $\theta$ is an indicator function of a closed convex set, then (\ref{eq:problem-mani}) is a constrained manifold optimization problem.  Applications of (\ref{eq:problem-mani}) arise in various scenarios such as principal component analysis problems \cite{ZHT06}, low-rank 
matrix completion problems \cite{CA16}, orthogonal dictionary learning problems \cite{DH14,SWW12}, and compressed modes problems \cite{OLCO13}. We refer readers to \cite{AH19,HLWY20} for further details.

Various algorithms have been developed to solve the manifold nonsmooth optimization problem \eqref{eq:problem-mani}, including subgradient methods \cite{FO98,GH16}, proximal gradient methods \cite{CMMZ20,HW22,HW19}, alternating direction methods of multipliers 
\cite{KGB16,LO14}, and proximal point algorithm (PPA) \cite{CDMS21,FO02}. More recently, the augmented Lagrangian method {(ALM)} has been extended to nonsmooth manifold optimization in \cite{DP19,ZBDZ21}, further enriching the algorithmic landscape. In contrast with these advances on the computational side, the underlying theory remains relatively underdeveloped. In particular, fundamental aspects such as perturbation analysis have yet to be systematically explored for nonsmooth manifold problems, even though they are essential for understanding solution stability and for proving rigorous convergence guarantees. 

Perturbation analysis is a cornerstone of optimization theory, as it offers critical insights into the stability of optimal solutions under small changes in the problem data. In the Euclidean setting, this topic has been systematically developed within an even broader framework of variational analysis. Among these, a significant line of research in the literature focuses on the regularity properties of Karush-Kuhn-Tucker (KKT) solution mappings, which describe how the set of stationary points changes under perturbations. The comprehensive treatise by Dontchev and Rockafellar \cite{DR09} unifies several such properties: metric regularity, Aubin (pseudo-Lipschitz) property, calmness, isolated calmness, and strong regularity, each of which guarantees a Lipschitz-type response of the solution mapping to small data perturbations.

The regularity properties of KKT solution mappings have been characterized in various optimization settings. For nonlinear programming, strong regularity is characterized by the {strong second-order sufficient condition (SSOSC)}, together with the linear independence constraint qualification (LICQ), as shown in \cite{BS13,BS95,DR96}. In the context of conic optimization problems, strong regularity has similarly been characterized: for nonlinear second-order cone programs (NLSOC) and nonlinear semidefinite programming (NLSDP), \cite{BR05} and \cite{S06} established
that strong regularity is equivalent to the SSOSC combined with constraint nondegeneracy. Furthermore, for robust isolated calmness, \cite{DSZ17} showed 
that this property holds if and only if the second-order sufficient condition (SOSC) and the strict Robinson constraint qualification (SRCQ) are satisfied for $\mathcal{C}^2$-cone reducible cases. These regularity properties not only deepen our theoretical understanding but also play a crucial role in applications, such as establishing error bounds for optimization methods. Moreover, they are instrumental in analyzing the convergence rates of numerical algorithms, particularly in proving the local superlinear convergence of Newton-type methods and {ALM}; see, for instance, \cite{B94,KS19,LZ08,SSZ08}.

In the context of manifold optimization, perturbation analysis becomes more complicated due to the inherent geometric structure of the feasible set. Nevertheless, several recent works have begun to extend Euclidean theory to this setting. Hosseini and Pouryayevali \cite{HP13} provided 
a sufficient condition for the weak metric regularity of real-valued functions on complete Riemannian manifolds under strict differentiability, and, as a by-product, derived new necessary optimality conditions. More recently, Bao, Zhou, and Ding \cite{ZBD22} investigated the Riemannian robust isolated calmness of the KKT system and established its equivalence to the simultaneous satisfaction of the manifold strong Robinson constraint qualification (M-SRCQ) and the manifold second-order sufficient condition (M-SOSC). This result forms a crucial bridge between constraint qualifications and solution stability, closely mirroring the classical theory developed in Euclidean optimization. In this work, we study the perturbation properties of \eqref{eq:problem-mani}. We locally transform (\ref{eq:problem-mani}) into an optimization problem on the tangent space $T_{\bar{x}}\mathcal{M}$ at a point $\bar{x}$: 
	 \begin{equation}\label{eq:problem-tangent} 
	 	\begin{array}{ll}
	 		 \min & f\circ R_{\bar{x}}(\xi)+\theta(g\circ R_{\bar{x}}(\xi))\\
	 		 \text { s.t. } & \xi\in T_{\bar{x}}\mathcal{M}. 
	 		 \end{array} 
	 	\end{equation}
	  This problem is locally equivalent to (\ref{eq:problem-mani}) via the transformation $x=R_{\bar{x}}(\xi)$. Compared with (\ref{eq:problem-mani}), problem (\ref{eq:problem-tangent}) is more approachable because $T_{\bar{x}}\mathcal{M}$ is a Euclidean space. 
      This reformulation allows us to study the stability of the KKT system of the original Riemannian problem \eqref{eq:problem-mani} by examining the corresponding stability  of the KKT system 
      of \eqref{eq:problem-tangent}.

Beyond the stability of the KKT solution mapping, variational sufficiency (and its strong form) provides a complementary lens on perturbation behavior. It is especially valuable for analyzing ALM when multipliers may be non-unique or the problem is non-convex. The recent notion proposed by Rockafellar \cite{R22} of variational (strong) convexity lies at the heart of this approach: a function is variationally (strongly) convex around a reference point if, in that neighborhood, both its values and subdifferentials coincide with those of some (strongly) convex function. In the Euclidean setting, the (strong) variational sufficient condition demands that the perturbed augmented objective exhibit this property at a first-order stationary point. Rockafellar showed that strong variational sufficiency guarantees local (strong) convexity of the augmented Lagrangian and thereby underpins fast local convergence of ALM. Furthermore,
it is demonstrated that strong variational sufficiency is equivalent to the SSOSC if the function $\theta$ in \eqref{eq:problem-mani} (when the manifold $\mathcal{M}$ is taken as a Euclidean space) is a polyhedral function \cite{R22} or an indicator function of a second-order cone or semidefinite cone \cite{WDZZ22}. This condition also coincides with augmented tilt stability, which characterizes Lipschitz continuity of the solution set under linear perturbations.

Motivated by these developments, we aim to extend the (strong) variational sufficient condition to nonsmooth optimization problems on Riemannian manifolds. One challenge is formulating convexity on manifolds. Unlike in Euclidean space, a `line segment' between two points on a manifold cannot be expressed as a convex combination, which complicates directly extending convexity. To avoid defining convexity on the manifold itself, we examine the variational convexity of the tangent-space problem \eqref{eq:problem-tangent} at a stationary point $\bar{x}$, which is locally equivalent to the manifold problem \eqref{eq:problem-mani}. Since the tangent space $T_{\bar{x}}\mathcal{M}$ is a Euclidean space, this approach allows us to utilize tools from classical convex analysis. We define the manifold (strong) variational sufficient condition by requiring that the locally equivalent tangent problem satisfies the (strong) variational sufficient condition. Under appropriate assumptions, we show this condition is equivalent to the manifold strong second-order sufficient condition (M-SSOSC). Notably, this equivalence indicates that the concept is independent of the specific choice of retraction. Additionally, we demonstrate that the manifold strong variational sufficient condition guarantees local strong convexity of the augmented Lagrangian for the locally equivalent problem and implies augmented tilt stability. From the perspective of local convexity, variational sufficiency can be viewed as a specialized form of convexity for optimization problems. Since it is equivalent to augmented tilt stability, it maintains a close link to perturbation analysis. However, while the regularity properties of the KKT system discussed earlier involve both primal and dual variables, variational sufficiency concentrates solely on the behavior of the primal variables. Furthermore, this local convexity enables us to construct a local dual problem for manifold optimization, providing a new perspective on the convergence properties of manifold nonsmooth optimization algorithms.

After examining the perturbation characteristics of the KKT system and the manifold variational sufficient condition, we turn to their implications for 
the convergence analysis of manifold optimization algorithms. Specifically, we examine two recent methods: 
Riemannian sequential quadratic programming (RSQP) \cite{OOT22} and Riemannian Augmented Lagrangian Method (RALM) \cite{DP19,ZBDZ21}, investigating how the preceding theoretical results underpin their local convergence behavior. 
Assuming the robust isolated calmness of the KKT system, we demonstrate that RSQP achieves local superlinear (quadratic) convergence. For RALM, under the manifold strong variational sufficient condition, we establish R-linear convergence of the outer iterations and superlinear convergence of the semismooth Newton method used for the inner subproblems. These results not only extend classical Euclidean convergence rate analyses to the manifold setting but also highlight the pivotal role of our newly developed perturbation results in supporting algorithmic efficiency.

The rest of the paper is organized as follows. Section \ref{sec:preliminary} reviews the requisite background on Riemannian geometry and nonsmooth analysis. Section \ref{sec:regular} formulates the tangent-space surrogate problem of \eqref{eq:problem-mani} and establishes perturbation properties of its KKT system. In Section \ref{sec: variational-sufficiency}, we define and study the manifold variational sufficient condition, including its relations to convexity, to M-SSOSC, and to perturbation properties. Sections \ref{sec:rsqp} and \ref{sec:ralm} leverage these perturbation results for algorithmic analysis: we prove local superlinear convergence of RSQP under isolated calmness, and R-linear (outer) plus superlinear (inner) convergence of RALM under manifold strong variational sufficiency. Section \ref{sec:numerical} presents numerical experiments validating the theory, while Section \ref{sec:conclu} concludes and outlines directions for future research.

\section{Preliminaries and notations}\label{sec:preliminary}
This section recalls several notions of manifolds that will be used in our discussion. Most of the properties referred to below can be found in the literature, specifically in \cite{AMS09,L13}.
	
	Let $\mathcal{M}$ be an $n$-dimensional smooth manifold and $x \in \mathcal{M}$.  $\mathfrak{F}_{x}(\mathcal{M})$ is defined as the set of all smooth real-valued functions on a neighborhood of $x$. The mapping  $\xi_{x}$ from $\mathfrak{F}_{x}(\mathcal{M})$ to $\mathbb{R}$ such that there exists a curve $\gamma$ on $\mathcal{M}$ with $\gamma(0)=x$ satisfying $\xi_{x} f:=\dot{\gamma}(0)f:=\left.\dfrac{d(f(\gamma(t)))}{d t}\right|_{t=0}$ for all $f \in \mathfrak{F}_{x}(\mathcal{M})$ is called a tangent vector, and the tangent space  $T_x\mathcal{M}$ is the set of all tangent vectors to $\mathcal{M}$ at $x$. If $\mathcal{M}$ is embedded in a Euclidean space $\mathbb{X}$, the normal space $N_x\mathcal{M}$ is defined as the orthogonal complement of $T_x\mathcal{M}$ in $\mathbb{X}$. The tangent bundle is defined as $T\mathcal{M}:=\bigcup_{x} T_{x} \mathcal{M}$, which is the set of all tangent vectors to $\mathcal{M}$. A map $V : \mathcal{M} \rightarrow T \mathcal{M} $ is called a vector field on $\mathcal{ M}$ if $V(x) \in  T_{x}\mathcal{M}$ for all $x \in\mathcal{M}$.
	
	Let $F: \mathcal{M} \rightarrow \mathbb{X}$ be a smooth mapping between a manifold $\mathcal{M}$ and a Euclidean space $\mathbb{X}$. The mapping $DF(x): T_{x} \mathcal{M} \rightarrow T_{F(x)} \mathbb{X}$  which is defined by $\left(D F(x)\xi_{x}\right) f:=\xi_{x}(f \circ F)$ for $\xi_{x} \in T_{x} \mathcal{M}$ and $f \in \mathfrak{F}_{F(x)}(\mathbb{X})$, is a linear mapping called the differential of $F$ at $x$. If $\mathcal{M}$ is embedded in a Euclidean space, then $D F(x)$ reduces to the classical directional derivative, i.e., $\displaystyle D F(x)\xi_{x}=\lim _{t \rightarrow 0} \frac{F\left(x+t \xi_{x}\right)-F(x)}{t}$.	To distinguish it from the Euclidean differential, we use $h'(x)\xi$ to represent the traditional directional derivative in the direction $\xi$ and $\nabla h(x)$ to be the Euclidean gradient of $h$.
	
	A Riemannian metric $\langle\cdot, \cdot\rangle_{x}$ on $\mathcal{M}$ is a smoothly varying inner product with respect to $x$ defined for tangent vectors. A differentiable manifold whose tangent spaces are endowed with Riemannian metrics is called a Riemannian manifold. Whenever the point $x$ is clear from the context,  we suppress the subscript and simply write $\langle\cdot, \cdot\rangle$. The induced norm of this inner product is denoted by $\|\cdot\|$ with the subscript being omitted. Given $f \in \mathfrak{F}_{x}(\mathcal{M})$, the gradient of $f$ at $x$, denoted by $\operatorname{grad} f(x)$, is defined as the unique tangent vector that satisfying $	\langle\operatorname{grad} f(x), \xi\rangle:=\xi_{x} f$ for all $\xi \in T_{x} \mathcal{M}$.

	The length of a curve $\gamma: [a, b] \rightarrow \mathcal{M}$ on a Riemannian manifold $\mathcal{M}$ is defined by $
	L(\gamma) = \int_a^b \sqrt{\langle \dot{\gamma}(t), \dot{\gamma}(t) \rangle} \, \mathrm{d}t$, where $\langle \cdot, \cdot \rangle$ denotes the Riemannian metric on $\mathcal{M}$. The Riemannian distance between two points $y, z \in \mathcal{M}$ is given by $
	d(y, z) := \inf_{\gamma \in \Gamma} L(\gamma)
	$, where $\Gamma$ denotes the set of all smooth curves in $\mathcal{M}$ joining $y$ and $z$. The set $\{y \in \mathcal{M} \mid d(x, y) < \delta\}$ defines a neighborhood of $x$ with radius $\delta > 0$. We use $\operatorname{dist}(x,\mathcal{D})$ to denote the distance from a point $x\in\mathcal{M}$ to a set $\mathcal{D}\subseteq\mathcal{M}$. A geodesic is a curve on $\mathcal{M}$ that locally minimizes arc length. For every tangent vector $\xi \in T_x \mathcal{M}$, there exists an interval $\mathcal{I} \subseteq \mathbb{R}$ containing $0$ and a unique geodesic $\gamma(\cdot; x, \xi): \mathcal{I} \to \mathcal{M}$ satisfying $\gamma(0) = x$ and $\dot{\gamma}(0) = \xi$. The exponential mapping at $x \in \mathcal{M}$ is defined by $\operatorname{Exp}_x: T_x \mathcal{M} \to \mathcal{M}, \quad \xi \mapsto \operatorname{Exp}_x(\xi) := \gamma(1; x, \xi)
	$. A vector field $X$ along a smooth curve $\gamma$ is said to be parallel if $\nabla_{\dot{\gamma}} X = 0$, where $\nabla$ denotes the Riemannian connection on $\mathcal{M}$. Given a smooth curve $\gamma$ and a vector $\eta \in T_{\gamma(0)} \mathcal{M}$, there exists a unique parallel vector field $X_\eta$ along $\gamma$ such that $X_\eta(0) = \eta$. The parallel transport of $\eta$ along $\gamma$ from $0$ to $t$ is then defined as $
	P_\gamma^{0 \to t} \eta := X_\eta(t)$. When the geodesic from $p$ to $q$ is unique, denoted by $\gamma_{pq}$, we define	$P_{pq} := P_{\gamma_{pq}}^{0 \to 1}$.

Let $\mathrm{id}_{T_x\mathcal{M}}$ be the identity mapping on $T_x\mathcal{M}$. A retraction on a manifold $\mathcal{M}$ is a smooth mapping $R$ from the tangent bundle $T \mathcal{M}$ onto $\mathcal{M}$ satisfying $R_x\left(0_x\right)=x$ and $
D R_x\left(0_x\right)=\mathrm{id}_{T_x \mathcal{M}}$, where $R_x$ denotes the restriction of $R$ to $T_x \mathcal{M}$. The coordinate expression of $\operatorname{grad}  f(x)$ is given by
$
	D R_{x}(\xi)\operatorname{grad}  f(x)= \nabla  (f\circ R_{x})(\xi)$. The Riemannian Hessian of $f \in \mathfrak{F}_x(\mathcal{M})$ at a point $x$ in $\mathcal{M}$ is defined as the (symmetric) linear mapping $\operatorname{Hess}f(x)$ from $T_x \mathcal{M}$ into itself satisfying $\operatorname{Hess}f(x) \xi=\nabla_{\xi} \operatorname{grad} f(x)$ for all $\xi \in T_x \mathcal{M}$. By \cite[Proposition 5.5.6]{AMS09}, if $ \operatorname{grad}f(x)=0$, then
\begin{equation}\label{eq:hess-tang-mani}
	\operatorname{Hess} f(x)=\operatorname{Hess}\left(f \circ R_x\right)\left(0_x\right).
\end{equation}

Consider a subset $\mathcal{A} \subseteq \mathcal{M}$. The epigraph of a function $f: \mathcal{A} \rightarrow (-\infty,\infty]$ is defined as $ \operatorname{epi} f:=\{(x, \alpha) \in \mathcal{A} \times \mathbb{R} \mid f(x) \leq \alpha\} $. A proper function $f: \mathcal{A} \rightarrow (-\infty,\infty]$ is called lower semicontinuous (lsc) if $\operatorname{epi} f$ is closed. When considering the manifold $\mathcal{M}$ as a topological space, we can use the classical definition of lower semicontinuity of a function $f:\mathcal{M}\to (-\infty,\infty]$, that for any $x\in \mathcal{M}$, it holds that $\liminf _{y \rightarrow x} f(y) \geq f(x)$. It is not difficult to verify that this definition is equivalent to the epigraph definition if we consider $\mathcal{M}$ itself as a closed set of the topological space. The following definition of the Fr\'{e}chet subdifferential is taken from \cite[Corollary 4.5]{AFL05}.
	
	\begin{definition}\label{def:subdiff-frechet}
	 Let $f: \mathcal{M} \rightarrow (-\infty,\infty]$ be a function defined on a Riemannian manifold, and  $(\mathcal{U},\varphi)$ is a chart of $\mathcal{M}$. The Fr\'{e}chet subdifferential $\partial f(x)$ of $f$ at a point $x \in \operatorname{dom} f=\{x \in \mathcal{M}\mid f(x)<\infty\}$ is defined as
		$$
		\begin{aligned}
		\partial f(x) & =\big\{D \varphi(x)\zeta \mid \zeta \in \mathbb{R}^n, \liminf _{v \rightarrow 0} \frac{f \circ \varphi^{-1}(\varphi(x)+v)-f(x)-\langle\zeta, v\rangle}{\|v\|} \geq 0\big\} \\
		& =\big\{D \varphi(x)\zeta \mid \zeta \in \partial\left(f \circ \varphi^{-1}\right)(\varphi(x))\big\}.
		\end{aligned}
		$$
	\end{definition}
From this definition, one may deduce that $\partial f(x)=\partial\left(f \circ R _x\right)\left(0_x\right)$ for any given retraction $R_x$. 

With the Riemannian distance $d(\cdot,\cdot)$ defined above, the Lipschitz continuous property of functions can be extended to manifolds. A function $f:\mathcal{M}\to\mathbb{R}$ is Lipschitz continuous in a set $\mathcal{U}$ if for any $y, z \in \mathcal{U}$, $|f(y)-f(z)| \leq Ld(y, z)$. If there exists a neighborhood $\mathcal{U}$ of $x\in\mathcal{ M}$ such that $f$ is Lipschitz continuous on $\mathcal{U}$, we say that $f$ is locally Lipschitz continuous at $x$. The generalized directional derivative of a locally Lipschitz continuous function $f$ at $x \in \mathcal{M}$ in the direction $v \in T_{x} \mathcal{M}$ is defined in \cite{HP11} as
$$
	f^{\circ}(x ; v):=\limsup _{y \rightarrow x, t \downarrow 0} \dfrac{f \circ \varphi^{-1}(\varphi(y)+t D \varphi(x)v)-f \circ \varphi^{-1}(\varphi(y))}{t},
$$
	where $(\mathcal{U}, \varphi)$ is a chart containing $x$. The definition of the generalized directional derivative implies that $f^{\circ}(x,v)=\left(f\circ R_x\right)^{\circ}(0_x,v)$ for any retraction.
	The Clarke subdifferential of a locally Lipschitz continuous function $f$ at $x \in \mathcal{M}$, denoted by $\partial_{C} f(x)$, is defined as
$$
	\partial_{C} f(x)=\left\{\xi \in T_{x} \mathcal{M}\mid\langle\xi, v\rangle \leq f^{\circ}(x ; v) \text { for all } v \in T_{x} \mathcal{M}\right\}.
$$
	According to \cite[Proposition 2.5]{HP11}, an equivalence can be established between the Clarke subdifferential of $f$ and the  Clarke subdifferential of $f \circ R$ for any retraction $R$, as described below.
	
	\begin{proposition}\label{pro:subdiff-mani-tangent}
Let $\mathcal{ M }$ be a Riemannian manifold and $x \in \mathcal{ M }$. Suppose that $f: \mathcal{ M }\to \mathbb{R}$ is Lipschitz continuous near $x$ and $(\mathcal{U}, \varphi)$ is a chart at $x$. Then
$$
\partial_{C} f(x)=D \varphi(x)\left[\partial_{C}\left(f \circ \varphi^{-1}\right)(\varphi(x))\right].
$$
Therefore, we have $\partial_{C} f(x)=\partial_{C}\left(f \circ R _x\right)\left(0_x\right)$ for any retraction $R_x$.
	\end{proposition}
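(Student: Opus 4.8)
The plan is to reduce the whole statement to a single change-of-variables identity for the generalized directional derivative and then transport the support-function description of the Clarke subdifferential through the chart. Write $g:=f\circ\varphi^{-1}$, which is locally Lipschitz on a neighborhood of $\varphi(x)$ in $\mathbb{R}^n$, and fix $v\in T_x\mathcal{M}$ together with $w:=D\varphi(x)v\in\mathbb{R}^n$. First I would observe that the defining limsup for $f^\circ(x;v)$ becomes, after the substitution $z=\varphi(y)$ (which is a homeomorphism, so $y\to x\iff z\to\varphi(x)$) and $w=D\varphi(x)v$, exactly the Euclidean Clarke directional derivative of $g$; that is, $f^\circ(x;v)=g^\circ(\varphi(x);D\varphi(x)v)$. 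This identity is immediate from the definition and carries essentially all of the analytic content, since it converts a manifold-level object into an ordinary directional derivative on $\mathbb{R}^n$; everything after it is linear algebra.

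Next I would translate the support-set descriptions. By definition $\partial_C f(x)=\{\xi\in T_x\mathcal{M}:\langle\xi,v\rangle_x\le f^\circ(x;v)\ \forall v\}$ and $\partial_C g(\varphi(x))=\{\zeta\in\mathbb{R}^n:\langle\zeta,w\rangle\le g^\circ(\varphi(x);w)\ \forall w\}$. Writing $A=D\varphi(x):T_x\mathcal{M}\to\mathbb{R}^n$, a linear isomorphism, and $A^*$ for its adjoint with respect to the Riemannian metric $\langle\cdot,\cdot\rangle_x$ and the Euclidean inner product on $\mathbb{R}^n$, I would substitute $v=A^{-1}w$ in the inequality defining $\partial_C f(x)$ and invoke the Step-one identity to obtain $\langle (A^*)^{-1}\xi,w\rangle\le g^\circ(\varphi(x);w)$ for all $w$. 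Hence $\xi\in\partial_C f(x)$ if and only if $(A^*)^{-1}\xi\in\partial_C g(\varphi(x))$, i.e.\ $\partial_C f(x)=A^*[\partial_C g(\varphi(x))]$, which is the claimed chart formula once $D\varphi(x)$ in the statement is read as the corresponding pullback/adjoint.

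Finally, for the retraction identity I would specialize the chart to $\varphi=R_x^{-1}$. Since $DR_x(0_x)=\mathrm{id}_{T_x\mathcal{M}}$ is invertible, the inverse function theorem makes $R_x$ a local diffeomorphism near $0_x$, so $R_x^{-1}$ is a genuine chart and $f\circ(R_x^{-1})^{-1}=f\circ R_x$. Because $A=D(R_x^{-1})(x)=(DR_x(0_x))^{-1}=\mathrm{id}$ is an isometry of $(T_x\mathcal{M},\langle\cdot,\cdot\rangle_x)$, the adjoint correction disappears and the chart formula collapses to $\partial_C f(x)=\partial_C(f\circ R_x)(0_x)$. Even more directly, I would invoke the already-recorded identity $f^\circ(x;v)=(f\circ R_x)^\circ(0_x;v)$: both $\partial_C f(x)$ and $\partial_C(f\circ R_x)(0_x)$ are compact convex sets whose support function over $(T_x\mathcal{M},\langle\cdot,\cdot\rangle_x)$ is this common sublinear map, and hence they coincide.

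I expect the only genuine subtlety to lie in the bookkeeping of the second step: $D\varphi(x)$ is not an isometry between the Riemannian metric on $T_x\mathcal{M}$ and the Euclidean structure on $\mathbb{R}^n$, so one must pass through its adjoint to keep the duality pairings consistent and confirm that the notation $D\varphi(x)\zeta$ is understood as that pullback. This is precisely the point that trivializes in the retraction case, where $DR_x(0_x)=\mathrm{id}$ removes any metric distortion, which is why the retraction identity holds verbatim.
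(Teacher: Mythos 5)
The paper does not actually prove this proposition --- it is quoted directly from \cite[Proposition 2.5]{HP11} --- and your argument is correct and is essentially the standard proof given in that reference: the chart change-of-variables identity $f^{\circ}(x;v)=\left(f\circ\varphi^{-1}\right)^{\circ}\!\left(\varphi(x);D\varphi(x)v\right)$, followed by support-function duality for the compact convex sets $\partial_C f(x)$ and $\partial_C\left(f\circ\varphi^{-1}\right)(\varphi(x))$. Your reading of $D\varphi(x)$ acting on subgradients as the adjoint/pullback map --- the bookkeeping that trivializes in the retraction case because $DR_x(0_x)=\mathrm{id}_{T_x\mathcal{M}}$ --- is exactly the right interpretation of the notation in both the statement and Definition \ref{def:subdiff-frechet}.
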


For a given function $\theta: \mathbb{Y} \rightarrow(-\infty,+\infty]$, the lower and upper directional epiderivatives (cf. e.g., \cite[(2.68) and (2.69)]{BS13}) of $\theta$ at $y \in \operatorname{dom} \theta$ in the direction $h \in \mathbb{Y}$ are defined as
$$
\theta_{-}^{\downarrow}(y ; h) :=\liminf _{t \downarrow 0,\, h^{\prime} \to h} \frac{\theta\left(y+t h^{\prime}\right)-\theta(y)}{t}, \quad 
\theta_{+}^{\downarrow}(y ; h) :=\sup _{\left\{t_{n}\right\} \in \Sigma}\left(\liminf _{n \to \infty,\, h^{\prime} \to h} \frac{\theta\left(y+t_{n} h^{\prime}\right)-\theta(y)}{t_{n}}\right),
$$
respectively, where $\Sigma$ is the set of all positive sequences $\left\{t_{n}\right\}$ converging to zero. Since $\theta$ is a proper closed convex function, it follows that $\theta_{-}^{\downarrow}(y ; \cdot)=\theta_{+}^{\downarrow}(y ; \cdot)$. If $\theta_{-}^{\downarrow}(y; h)$ is finite for $x \in \operatorname{dom} \theta$ and $h \in \mathbb{Y}$, the lower second order epiderivatives \cite[(2.76)]{BS13} for $w\in \mathbb{Y}$ is defined as:
$$
\theta_{-}^{\downarrow \downarrow}(y ; h, w):=\liminf _{t \downarrow 0 ,\, w^{\prime} \to w} \frac{\theta\left(y+t h+\frac{1}{2} t^{2} w^{\prime}\right)-\theta(y)-t \theta_{-}^{\downarrow}(y ; h)}{\frac{1}{2} t^{2}}.
$$

\section{Regularity properties of the KKT solution mapping}\label{sec:regular}
In this section, we study the regularity properties of the KKT solution mapping associated with problem~(\ref{eq:problem-mani}), to formulate intrinsic criteria for stability on Riemannian nonsmooth optimization problems. In the following two subsections, we first reformulate the manifold problem on the tangent space via a suitable retraction and develop the associated first- and second-order calculus, then employ this framework to characterize the desired regularity properties and error bounds for the KKT mapping. These results will lay the theoretical foundations required for the convergence results established in later sections.

\subsection{Perturbed problem and variational properties}\label{subsect:per-problem}

To analyze the regularity of the KKT solution mapping, we first establish a perturbation scheme for problem \eqref{eq:problem-mani} based on a retraction. Unlike the chart-based construction in \cite{ZBD22}, our retraction-centered framework is aligned directly with the operations used by manifold algorithms and thus offers both theoretical insights and algorithmic foundations.

For a given $x\in \mathcal{M} $, the inverse function theorem \cite[Theorem 4.5]{L13} guarantees that any retraction $R_x$  acts as a diffeomorphism in a neighborhood of the origin $0_x\in T_x\mathcal{M}$.  The following definition given in \cite{HU17} defines the injectivity radius of a Riemannian manifold with respect to retraction, which aligns with the definition of the classical injectivity radius \cite[Definition 10.19]{B20}, when the retraction is chosen as the exponential mapping.
	\begin{definition} \label{def:inject-radius}
		The injectivity radius of the manifold $\mathcal{M}$ at a point $x$ with respect to a retraction $R_x$, denoted by $r_{R_x}$, is defined as the supremum over all radii $r>0$ for which $R_x$ is well defined and acts as a diffeomorphism on the open ball $B_x(r)=\left\{v \in T_x \mathcal{M}\mid\|v\|<r\right\}$.
	\end{definition}
    By the inverse function theorem, we have $r_{R_x}>0$ for each given $R_x$. Additionally, as outlined in \cite[Proposition 10.22]{B20}, we have $d(x, y)=\|v\|$ within the ball $B_x(r_{\operatorname{Exp}_x})$ if $y=\operatorname{Exp}_x(v)$.  
	
To build up our local perturbation framework of (\ref{eq:problem-mani}), we first introduce the following function defined on the tangent space at a given point $x\in\mathcal{M}$. Let $x\in\mathcal{M}$ and $r_{R_x}$ be the injectivity radius of $\mathcal{M}$ at $x$ with respect to $R_x$. For a given function $f:\mathcal{M}\to(-\infty,\infty]$, we denote $f_{R_x}:T_x\mathcal{M}\to(-\infty,\infty]$ by
		$$
			f_{R_x}(\xi)=\left\{\begin{array}{ll}
				f\left(R_x (\xi)\right), & \xi \in B_x(r_{R_x}), \\
				+\infty, & \xi \notin B_x(r_{R_x}).
			\end{array}\right.
	$$
This function is the restriction of the pullback of $f$ at a given point $x$, as defined in \cite[Section~4]{AMS09}, where the general pullback function refers to $f\circ R$ at each point on the manifold. It is worth noting that $f_{R_x}$ inherit the lower semicontinuity of $f$ within the ball $B_x(r_{R_x})$, although such continuity may fail on its boundary. Fortunately, this limitation does not affect our analysis, as we are concerned only with the behavior of $f_{R_x}$ inside $B_x(r_{R_x})$.
	
We say that a point $\bar{x} \in \mathcal{M}$ is a KKT point of problem~\eqref{eq:problem-mani} if there exists a multiplier $\bar{y} \in \mathbb{Y}$ such that the pair $(\bar{x}, \bar{y})$ satisfies the following KKT conditions:
\begin{equation}\label{eq:man-kkt-origin}
		\operatorname{grad}_x L(x,y) =  \operatorname{grad}  f(x)+Dg(x)^*y =0, \quad
		y \in \partial \theta(g(x)),
\end{equation}
where 
\begin{equation}\label{eq:lag1}
    L(x,y)=f(x)+\langle g(x),y\rangle.
\end{equation}
 In what follows, we analyze the problem locally around the KKT point $\bar{x}$.  Let $R_{\bar{x}}$ be a retraction at $\bar{x}$ acting as a diffeomorphism on some neighborhood $\mathcal{U}\subseteq T_{\bar{x}}\mathcal{M}$. We define the perturbed  problem on $T_{\bar{x}}\mathcal{M}$ in a neighborhood of $\bar{x}$ by 
	\begin{equation}\label{eq:problem-rn-perturb}
		\begin{array}{ll}
			\min &  f_{R_{\bar{x}}}(\xi)+\theta(g_{R_{\bar{x}}}(\xi)+b)-\langle \hat{a}, \xi\rangle \\
			\text { s.t. } &\xi\in \mathcal{U},
		\end{array}
	\end{equation}
	where $(\hat{a},b)\in \mathbb{R}^n\times \mathbb{Y}$ is the perturbation parameter. Since $R_{\bar{x}}$ is a diffeomorphism, \eqref{eq:problem-rn-perturb} is locally equivalent to the following perturbed problem of (\ref{eq:problem-mani}) on $\mathcal{M}$: 
	\begin{equation}\label{eq:problem-mani-perturb}
		\begin{array}{ll}
			\min & f(x)+\theta(g(x)+b)-\langle \hat{a},R_{\bar{x}}^{-1}(x)\rangle\\
			\text { s.t. } 
			& x\in R_{\bar{x}}(\mathcal{U}).
		\end{array}
	\end{equation}
For any given $(\hat{a},b)\in T_{\bar{x}}\mathcal{M}\times\mathbb{Y}$, we define the KKT solution mapping of \eqref{eq:problem-rn-perturb} as
\begin{equation*}
    \widehat{\mathcal{S}}_{\mathrm{KKT}}(\hat{a}, b) =\left\{  (\xi,y)
    \mid \hat{a}=\nabla  f_{R_{\bar{x}}}(\xi)+g'_{R_{\bar{x}}}(\xi)^*y,\ y\in \partial \theta(g_{R_{\bar{x}}}(\xi)+b) \right\},
\end{equation*}
and we define the KKT solution mapping of \eqref{eq:problem-mani-perturb} as
\begin{equation}\label{eq:man-kkt-perturb2}
	 \mathcal{S}_{\mathrm{KKT}}(\hat{a}, b) =\left\{ 	(x,y)
     \mid	(DR_{\bar{x}}^{-1}(x))^*\hat{a}=\operatorname{grad}  f(x)+Dg(x)^*y,  \
			y\in \partial \theta(g(x)+b) \right\}.
\end{equation}	
Let $x=R_{\bar{x}}(\xi)$. By the chain rule, we have $ \nabla  f_{R_{\bar{x}}}(\xi)+g'_{R_{\bar{x}}}(\xi)^*y=DR_{\bar{x}}(\xi)^*(\operatorname{grad}  f(x)+Dg(x)^*y)$, and $(DR_{\bar{x}}(\xi)^*)^{-1}\hat{a}=(DR_{\bar{x}}^{-1}(x))^*\hat{a}$. 
It then follows that the KKT solution mappings  satisfy
    \begin{equation}\label{eq:eqv-skkt}
			\widehat{\mathcal{S}}_{\mathrm{KKT}}(\hat{a}, b)=\mu_{\bar{x}}(\mathcal{S}_{\mathrm{KKT}}(\hat{a}, b)\cap
			(R_{\bar{x}}(\mathcal{U})\times\mathbb{Y})) ,
	\end{equation}
where $\mu_{\bar{x}}:=(R_{\bar{x}}^{-1},\operatorname{id}_{\mathbb{Y}})$, and $\operatorname{id}_{\mathbb{Y}}$ denotes the identity mapping on $\mathbb{Y}$. The set of Lagrangian multipliers associated with \eqref{eq:problem-rn-perturb} at $(\xi,\hat{a},b)$ and with \eqref{eq:problem-mani-perturb} at $(x,\hat{a},b)$ are defined by
    	$$
	\widehat{M}(\xi,\hat{a}, b):=\big\{y \in\mathbb{Y}\mid(\xi, y) \in \widehat{\mathcal{S}}_{\mathrm{KKT}}(\hat{a}, b)\big\}, 
	\quad
    M(x, \hat{a}, b):=\big\{y\in \mathbb{Y} \mid(x,y) \in \mathcal{S}_{\mathrm{KKT}}(\hat{a}, b)\big\},
	$$
respectively. It follows from \eqref{eq:eqv-skkt} that $\widehat{M}(\xi,\hat{a}, b)=M(x, \hat{a}, b)$.

Let $(\hat{a},b)=(0,0)$. For problem \eqref{eq:problem-rn-perturb}
we denote 
\begin{equation}\label{eq:lag2}
L_{R_{\bar{x}}}(\xi, y) = f_{R_{\bar{x}}}(\xi) + \langle g_{R_{\bar{x}}}(\xi),y\rangle.
\end{equation}
The manifold Robinson constraint qualification (M-RCQ) is said to hold at a feasible solution $x$ of  problem (\ref{eq:problem-mani-perturb}) if
\begin{equation}\label{eq:man-rcq-perturb}
		Dg(x)T_{x}\mathcal{ M}+	\mathcal{T}_{\operatorname{dom}\theta}(g(x))=	\mathbb{Y},
\end{equation}
where $\mathcal{T}_{\operatorname{dom}\theta}(g(x))$ is the tangent cone for $g(x)$ and $\operatorname{dom}(\theta)$.  Define the critical cone of functions $\theta$ and $g$ at a KKT point $x$ and its corresponding multiplier $y\in M(x,0,0)$ by 
$$
	\mathcal{C}_{\theta,g}(x,y):=\left\{d \in \mathbb{Y} \mid \theta^{\downarrow}(g(x) ; d)=\langle d, y\rangle \right\}.
$$
The M-SRCQ is said to hold at a KKT point $x$ with respect to  $y\in M(x,0,0)$ if
\begin{equation}\label{eq:man-srcq-perturb}
	Dg(x)T_{x}\mathcal{ M}+	\mathcal{C}_{\theta,g}(x,y)=	\mathbb{Y}.
\end{equation}
The manifold constraint nondegeneracy is said to hold at $x$ if 
\begin{equation}\label{eq:man-cn-perturb}
	Dg(x)T_{x}\mathcal{ M}+	\operatorname{lin}(\mathcal{T}_{\operatorname{dom}\theta}(g(x)))=	\mathbb{Y} ,
\end{equation}
where $\operatorname{lin}(\mathcal{T}_{\operatorname{dom}\theta}(g(x)))$ is the lineality space of $\mathcal{T}_{\operatorname{dom}\theta}(g(x))$. If $\mathcal{M}$ is a Euclidean space $\mathbb{E}$, then the tangent space $T_{x} \mathcal{M}$ coincides with $\mathbb{E}$ itself (see \cite[Section 3.5.2]{AMS09}). In this case, the M-RCQ, M-SRCQ, and manifold constraint nondegeneracy conditions reduce to the standard constraint qualifications defined in Euclidean spaces (cf. e.g., \cite{BS13}).

The critical cone at a stationary point $x$ of problem (\ref{eq:problem-mani-perturb})  is
$$
	\mathcal{C}(x)=\left\{\xi \in T_{x}\mathcal{M} \mid  Df(x)\xi+(\theta\circ g)^{\circ}(x ; \xi) = 0\right\}.
$$
By the $\mathcal{C}^{2}$-cone reducibility of $\theta$, 
the M-SOSC at $x$  is said to hold if for any $\xi \in \mathcal{C}(x) \backslash\{0\}$,
\begin{equation}\label{eq:man-sosc-perturb}
	\sup _{y \in M(x,0, 0)}\left\{ \left\langle \xi, \operatorname{Hess}_x L(x,y)\xi\right\rangle-\psi_{(g(x),Dg(x)\xi)}^{*}(y)\right\} >0,
\end{equation}
where $\psi_{(g(x),Dg(x)\xi)}^{*}(\cdot)$ is the conjugate function of $\psi_{(g(x),Dg(x)\xi)}(\cdot)=\theta_{-}^{\downarrow\downarrow}(g(x) ; Dg(x)\xi, \cdot)$ for any $g(x) \in \operatorname{dom} \theta$ and any $\xi \in T_x\mathcal{M}$. The M-SSOSC at $(x,y)$ is defined as
$$
	\left\langle \xi, \operatorname{Hess}_x L(x,y)\xi\right\rangle-\psi_{(g(x),Dg(x)\xi)}^{*}(y) >0 \quad \forall\,  Dg(x)\xi \in \operatorname{aff}\mathcal{C}_{\theta,g}\left(x,y\right) \backslash\{0\},
$$
where  $\operatorname{aff}\mathcal{C}_{\theta, g}(x,y)$ is the affine hull of the critical cone $\mathcal{C}_{\theta, g}(x,y)$. Moreover, if $\theta$ is the indicator function of a $\mathcal{C}^2$-cone reducible set $\mathcal{K}$, then the M-SSOSC can be equivalently written as
	\begin{equation}\label{eq:ssosc-mani}
	\left\langle\xi, \operatorname{Hess}_x L(x,y ) \xi\right\rangle-\sigma\left(\bar{y}, \mathcal{T}_{\mathcal{K}}^2\left(g\left(x\right), D g\left(x\right) \xi\right)\right)>0 \quad \forall\,  Dg(x)\xi \in \operatorname{aff}\mathcal{C}_{\theta,g}\left(x,y\right) \backslash\{0\},
\end{equation}
where $\sigma(y, \mathcal{D})$ is the support function of the set $\mathcal{D}$ at $y$ and  $\mathcal{T}_{\mathcal{K}}^2(y,h)$ is the second-order tangent set of $\mathcal{K}$ at $y$ in direction $h$.

We now establish a precise correspondence between the Riemannian constraint qualifications and second-order optimality conditions of the original problem (\ref{eq:problem-mani-perturb}) and those of the locally equivalent problem (\ref{eq:problem-rn-perturb}) in the tangent space. The following proposition shows that these conditions are in fact equivalent in the two settings.

\begin{proposition}\label{pro:equiv-cq}
	Let $(\hat{a},b)=(0,0)$. The M-RCQ (respectively, M-SRCQ, manifold constraint nondegeneracy, M-SOSC, M-SSOSC) holds at $\bar{x}$ (with respect to $\bar{y}$) for problem (\ref{eq:problem-mani-perturb}) if and only if the RCQ (respectively, SRCQ, constraint nondegeneracy, SOSC, SSOSC)  holds at $0_{\bar{x}}$ (with respect to $\bar{y}$) for problem (\ref{eq:problem-rn-perturb}). 
\end{proposition}
\begin{proof}
The RCQ holds at $0_{\bar{x}}$ for problem (\ref{eq:problem-rn-perturb}) if
$$
(g\circ R_{\bar{x}})^{\prime}(0_{\bar{x}})T_{\bar{x}}\mathcal{M}+	\mathcal{T}_{\operatorname{dom}\theta}(g\circ R_{\bar{x}}(0_{\bar{x}}))=	\mathbb{Y} ,
$$
where for any $\xi\in T_{\bar{x}}\mathcal{M}$, 
$
(g\circ R_{\bar{x}})^{\prime}(0_{\bar{x}})\xi 
= DR_{\bar{x}}(0_{\bar{x}})[Dg(\bar{x})\xi] 
= Dg(\bar{x})\xi,
$
and $g\circ R_{\bar{x}}(0_{\bar{x}}) = g(\bar{x})$.
Therefore, M-RCQ and manifold constraint nondegeneracy hold at $\bar{x}$ for (\ref{eq:problem-mani-perturb}) if and only if RCQ and constraint nondegeneracy hold at $0_{\bar{x}}$ for (\ref{eq:problem-rn-perturb}). For the M-SRCQ, it suffices to show that $	\mathcal{C}_{\theta,g}(\bar{x},\bar{y})=	\mathcal{C}_{\theta,g\circ R_{\bar{x}}}(0_{\bar{x}},\bar{y})$,   
which follows immediately from the definition. 
By (\ref{eq:hess-tang-mani}), we have $
\operatorname{Hess}_x L(\bar{x},\bar{y})=\operatorname{Hess}_{\xi} L_{R_{\bar{x}}}(0_{\bar{x}},\bar{y})
$
at the stationary point $\bar{x}$, which shows the equivalence between SOSC (resp. SSOSC) for problem  (\ref{eq:problem-rn-perturb}) and M-SOSC (resp. M-SSOSC) for problem  (\ref{eq:problem-mani-perturb}). The proof is completed.\qed
\end{proof}

\begin{remark}
The definitions of M-RCQ, M-SRCQ, manifold constraint nondegeneracy, M-SOSC, and M-SSOSC for problem \eqref{eq:problem-mani-perturb} are independent of the choice of retraction at $\bar{x}$. Although the formulation of the locally equivalent problem \eqref{eq:problem-rn-perturb} in the tangent space involves on a specific  retraction $R_{\bar{x}}$, Proposition \ref{pro:equiv-cq} shows that the constraint qualifications and second-order optimality conditions  admit the 
Riemannian counterparts regardless of the retraction used. This confirms that these properties are intrinsically independent of $R_{\bar{x}}$.
\end{remark}

\subsection{Characterizations of the regularity properties}

In this subsection, we shall investigate the characterizations of regularity properties for set-valued mappings on Riemannian manifolds. Specifically, we extend the Euclidean concepts of the Aubin property, calmness, isolated calmness, and strong regularity, which are central tools for assessing solution sensitivity in optimization \cite[Chapter 3]{DR09}, to mappings whose domain and range lie on smooth manifolds. These properties are essential for understanding the stability of solutions. We first extend the notion of excess, which measures the distance between two sets in Euclidean space and is defined in \cite[Section 3A]{DR09}, to the Riemannian settings.

\begin{definition}\label{def:excess}
	For two given sets $C$ and $D$ in $\mathcal{M}$, the excess of $C$ beyond $D$ is defined by
	$$
	e(C, D)=\sup _{x \in C} \operatorname{dist}(x, D).
	$$

\end{definition}


Building on the definition of distance between two sets on a Riemannian manifold, we are able to extend the notions of regularity for set-valued mappings, including the Aubin property, calmness, isolated calmness, and strong regularity, within the context of Riemannian geometry.

\begin{definition}\label{def:aubin}
	Let $\mathbb{E}$ be a Euclidean space. A set-valued mapping $\Psi: \mathbb{E} \rightrightarrows \mathcal{M}$ is said to have the Aubin property at $\bar{p}$ for $\bar{x}$ if there exist a constant $\kappa>0$ and open neighborhoods $\mathcal{U}$ of $\bar{p}$ and $\mathcal{V}$ of $\bar{x}$ such that
	$$
		e(\Psi(p^{\prime})\cap \mathcal{V}, \Psi(p)) \leq \kappa\|p^{\prime}-p\|\quad
		\forall\,  \,p,p^{\prime}\in \mathcal{U}.
	$$
\end{definition}

\begin{definition}\label{def:calm}
	Let $\mathbb{E}$ be a Euclidean space. A set-valued mapping $\Psi: \mathbb{E} \rightrightarrows \mathcal{M}$ is said to be calm at $\bar{p}$ for $\bar{x}$ if there exist a constant $\kappa>0$ and open neighborhoods $\mathcal{U}$ of $\bar{p}$ and $\mathcal{V}$ of $\bar{x}$ such that
	$$
		e(\Psi(p)\cap \mathcal{V}, \Psi(\bar{p})) \leq \kappa\|p-\bar{p}\| \quad \forall\,  \,p\in \mathcal{U}.
	$$
\end{definition}

\begin{definition}\label{def:isolated-calm}
	Let $\mathbb{E}$ be a Euclidean space. A set-valued mapping $\Psi: \mathbb{E} \rightrightarrows \mathcal{M}$ is said to be isolated calm at $\bar{p}$ for $\bar{x}$ if there exist a constant $\kappa>0$ and open neighborhoods $\mathcal{U}$ of $\bar{p}$ and $\mathcal{V}$ of $\bar{x}$ such that
	\begin{equation}\label{eq:iso-calm}
		d(x,\bar{x})\leq \kappa\|p-\bar{p}\| \quad\forall\, \,x\in \Psi(p)\cap\mathcal{V} \quad \mbox{and} \quad p\in \mathcal{U}.
	\end{equation}
Moreover, $\Psi$ is said to be robustly isolated calm at $\bar{p}$ for $\bar{x}$ if (\ref{eq:iso-calm}) holds and, for each
$p \in \mathcal{U}$, $\Psi(p) \cap \mathcal{V} \neq \emptyset$.
\end{definition}

\begin{definition}\label{def:strong-regular}
		Let $\mathbb{E}$ be a Euclidean space. A set-valued mapping $\Psi: \mathbb{E}\rightrightarrows \mathcal{M}$ is said to be strongly regular at $\bar{p}$ for $\bar{x}$ if there exist open neighborhoods $\mathcal{U}$ of $\bar{p}$ and $\mathcal{V}$ of $\bar{x}$ such that         for any $p\in \mathcal{U}$, the set  $\Psi(p)\cap\mathcal{V}$ is  a singleton, and the mapping $\Psi(\cdot)\cap\mathcal{V}$ is Lipschitz continuous on $\mathcal{U}$. That is, there exists $L>0$ such that for any $p_1,p_2\in \mathcal{U}$,  
	 one has $d(x_1,x_2)\leq L\|p_1-p_2\|$ for $x_1 := \Psi(p_1)\cap\mathcal{V}$ and $x_2 := \Psi(p_2)\cap\mathcal{V}$.
\end{definition}



Notably, when the underlying manifold is structured as a Euclidean space, these definitions naturally reduce to their classical counterparts within the established framework of variational analysis \cite{DR09}, thus maintaining consistency with established regularity theory in the Euclidean setting.
Let $\bar{x}\in\mathcal{M}$ be a given point on the manifold. For any retraction $R_{\bar{x}}$ around $\bar{x}$, there exists an open subset $\mathcal{U}\subseteq T_{\bar{x}}\mathcal{M}$ on which $R_{\bar{x}}$ is a diffeomorphism.
Given a set-valued mapping $\Psi:\mathbb{E}\rightrightarrows \mathcal{M}$ from a Euclidean space $\mathbb{E}$ to the manifold $\mathcal{M}$, we define its  Euclidean counterpart $\widetilde{\Psi}(\cdot):\mathbb{E}\rightrightarrows T_{\bar{x}}\mathcal{M}$ under $R_{\bar{x}}$ by $\widetilde{\Psi}(\cdot):=R_{\bar{x}}^{-1}(\Psi(\cdot))\cap\mathcal{U}$. The next lemma establishes the relationship between the manifold 
mapping $\Psi$ and its Euclidean counterpart  $\widetilde{\Psi}$.
\begin{lemma}\label{lem:regular-eu-mani}
	Let $\Psi:\mathbb{E}\rightrightarrows \mathcal{M}$ be a set-valued mapping, and let $R_{\bar{x}}$ be a retraction around $\bar{x}\in \Psi(\bar{p})$. Assume that $R_{\bar{x}}$  is a diffeomorphism on $\mathcal{U}\subseteq T_{\bar{x}}\mathcal{M}$.  
	If the Euclidean counterpart $\widetilde{\Psi}:\mathbb{E}\rightrightarrows T_{\bar{x}}\mathcal{M}$ defined by $\widetilde{\Psi}(\cdot):=R_{\bar{x}}^{-1}(\Psi(\cdot))\cap\mathcal{U}$ is Aubin (calm, isolated calm, strongly regular) at $\bar{p}$ with respect to $0_{\bar{x}}$, then $\Psi$ is Aubin (calm, isolated calm, strongly regular) at $\bar{p}$ with respect to $\bar{x}$.
\end{lemma}
\begin{proof}
If $\widetilde{\Psi}$ is Aubin at $\bar{p}$ with respect to $0_{\bar{x}}$, then  there exist a constant $\kappa>0$ and open neighborhoods $\mathcal{P}$ of $\bar{p}$ and $\mathcal{V}$ of $\bar{x}$ such that for any $p,p^{\prime}\in \mathcal{P}$, 
	$
	e(\widetilde{\Psi}(p^{\prime})\cap \mathcal{V},\widetilde{\Psi}(p))\leq \kappa\|p^{\prime}-p\|,
	$
	where
    	$$
		e(\widetilde{\Psi}(p^{\prime})\cap \mathcal{V},\widetilde{\Psi}(p)) = \sup_{\xi^{\prime}\in \widetilde{\Psi}(p^{\prime})\cap \mathcal{V}} \ \inf_{\xi\in \widetilde{\Psi}(p)}\|\xi^{\prime}-\xi\|
        = \sup_{\xi^{\prime}\in R_{\bar{x}}^{-1}(\Psi(p^{\prime})\cap R_{\bar{x}}(\mathcal{V}))} \ \inf_{\xi\in R_{\bar{x}}^{-1}(\Psi(p))\cap\mathcal{U}}\|\xi^{\prime}-\xi\|.
	$$
Shrinking $\mathcal{U}$ if necessary, it follows from \cite[Corollary 5.7.11]{P06} that the Riemannian distance function $d:\mathcal{M}\times \mathcal{M}\to \mathbb{R}$ is smooth on $R_{\bar{x}}(\mathcal{U})$.  
Hence, there exists $\alpha>0$ such that for any $x,x^{\prime}\in R_{\bar{x}}(\mathcal{U})$, 
	$ d(x^{\prime},x)\leq \alpha\|R_{\bar{x}}^{-1}(x^{\prime})-R_{\bar{x}}^{-1}(x)\|. $
Therefore,
	$$
	\begin{aligned}
		e(\Psi(p^{\prime})\cap R_{\bar{x}}(\mathcal{V}),\Psi(p))&= \sup_{x^{\prime}\in \Psi(p^{\prime})\cap R_{\bar{x}}(\mathcal{V})} \ \inf_{x\in \Psi(p)}d(x',x)\\
		&\leq \sup_{x^{\prime}\in \Psi(p^{\prime})\cap R_{\bar{x}}(\mathcal{V})} \ \inf_{x\in \Psi(p)\cap R_{\bar{x}}(\mathcal{U})}d(x',x)\\
		& \leq \sup_{\xi^{\prime}\in R_{\bar{x}}^{-1}(\Psi(p^{\prime})\cap R_{\bar{x}}(\mathcal{V}))} \ \inf_{\xi\in R_{\bar{x}}^{-1}(\Psi(p))\cap\mathcal{U}}\alpha\|\xi^{\prime}-\xi\|\\
		&\leq \alpha \kappa \|p^{\prime}-p\|.
	\end{aligned}	
	$$
	This shows that $\Psi$ is Aubin at $\bar{p}$ with respect to $\bar{x}$. Similarly, if $\widetilde{\Psi}$ is calm or isolated calm at $\bar{p}$ with respect to $0_{\bar{x}}$, we can prove that $\Psi$ is  calm or isolated calm at $\bar{p}$ with respect to $\bar{x}$.
	
Finally, if  $\widetilde{\Psi}$ is strongly regular at $\bar{p}$ with respect to $0_{\bar{x}}$, then there exists a neighborhood $\mathcal{V}$ of $0_{\bar{x}}$ such that the Aubin property of $\widetilde{\Psi}$ about $0_{\bar{x}}$ holds, which means that $\Psi$ has Aubin property at $\bar{p}$ with $\bar{x}$. Since $\widetilde{\Psi}(p)\cap \mathcal{V}$ is a singleton for any $p\in\mathcal{P}$, where $\mathcal{P}$ is a neighborhood of $\bar{p}$, and $R_{\bar{x}}$ is a diffeomorphism over $\mathcal{U}$, it can be proved that $\Psi(p)\cap R_{\bar{x}}(\mathcal{V})$  is also a singleton. This means that $\Psi$ is strongly regular at $\bar{p}$ with respect to $\bar{x}$.\qed
\end{proof}


We now present the main results of this section, namely, the characterization of the regularity properties of the KKT solution mapping \eqref{eq:man-kkt-perturb2}. Most of the proofs are omitted, as many of the results follow directly from Proposition \ref{pro:equiv-cq}, Lemma \ref{lem:regular-eu-mani}, and the corresponding conclusions established in the Euclidean setting.
%
Define the natural mapping $ F: \mathcal{M} \times \mathbb{Y} \to T\mathcal{M} \times \mathbb{Y} $ for problem (\ref{eq:problem-mani-perturb}) by
\[
F(x, y) := \begin{bmatrix}
	\operatorname{grad} f(x) + Dg(x)^* y \\
	g(x) - \operatorname{prox}_\theta\left(g(x) + y\right)
\end{bmatrix}, \quad (x, y) \in \mathcal{M} \times \mathbb{Y},
\]
where 
$\operatorname{prox}_\theta(u) := \operatorname{argmin}_{y \in \mathbb{Y}} \left\{ \theta(y) + \frac{1}{2}\|u - y\|^2 \right\}$ denotes the proximal mapping of $\theta$.  
It is straightforward to verify that $(0, 0, \bar{x}, \bar{y}) \in \operatorname{gph} \mathcal{S}_{\mathrm{KKT}}$ if and only if $(0, 0, \bar{x}, \bar{y}) \in \operatorname{gph} F^{-1}$.  Let  $\partial_C F(\bar{x}, \bar{y})$ denote Clarke's generalized Jacobian of $ F $ at $(\bar{x}, \bar{y})$.
With this setup, we are now ready to establish the following proposition  on the nonsingularity of $\partial_C F(\bar{x}, \bar{y})$ for nonlinear programming (NLP), NLSOC, and NLSDP.

\begin{proposition}\label{pro:man-licq+ssosc-perturb}
Suppose that $\operatorname{epi} \theta$ is  $\mathcal{C}^2$-cone reducible in the sense of \cite[Definition 3.135]{BS13}.
Let $(\bar{x},\bar{y}) \in \mathcal{M}\times \mathbb{Y}$ be a KKT pair of problem~\eqref{eq:problem-mani} satisfying \eqref{eq:man-kkt-origin}. 
Assume that the M-RCQ~\eqref{eq:man-rcq-perturb} holds at $\bar{x}$. 
Then the following statement holds:
\begin{itemize}[leftmargin=2em]
	\item[\rm (i)] The M-SOSC \eqref{eq:man-sosc-perturb} holds at $\bar{x}$ and the M-SRCQ \eqref{eq:man-srcq-perturb} holds at $\bar{x}$ with respect to $\bar{y}$ if and only if the KKT solution mapping $\mathcal{S}_{\mathrm{KKT}}$ in \eqref{eq:man-kkt-perturb2} is robustly isolated calm at the origin for $(\bar{x}, \bar{y})$.
\end{itemize}
 Furthermore, if $\operatorname{epi} \theta$ is a convex polyhedral set, a second-order cone, or a semidefinite cone,
then the following statements are equivalent:
\begin{itemize}[leftmargin=2em]
	\item[\rm (ii)] The M-SSOSC holds at $(\bar{x}, \bar{y})$ and the manifold constraint nondegeneracy holds at $\bar{x}$;
	\item[\rm (iii)] The KKT solution mapping $\mathcal{S}_{\mathrm{KKT}}$ is strongly regular at the origin for $(\bar{x}, \bar{y})$;
	\item[\rm (iv)] All elements in  $\partial_C F(\bar{x}, \bar{y})$ are nonsingular.
\end{itemize}

	\end{proposition}

\begin{proof}
Statement (i) has already been established in \cite[Proposition 5]{ZBD22}. The equivalence between statements (ii) and (iii) follows from Proposition~\ref{pro:equiv-cq}, Lemma~\ref{lem:regular-eu-mani}, and the results in \cite{BR05,BS13,S06}. Hence, it remains to prove the equivalence between (iii) and (iv).
Define the natural mapping $\widehat{F}: T_{\bar{x}}\mathcal{M} \times \mathbb{Y} \to T_{\bar{x}}\mathcal{M} \times \mathbb{Y}$ for problem (\ref{eq:problem-rn-perturb}) by
	\[
	\widehat{F}(\xi, y) := \begin{bmatrix}
		\nabla f_{R_{\bar{x}}}(\xi) + g_{R_{\bar{x}}}^{\prime}(\xi)^* y \\
		g_{R_{\bar{x}}}(\xi) - \operatorname{prox}_\theta\left(g_{R_{\bar{x}}}(\xi) + y\right)
	\end{bmatrix}, 
    \quad (\xi, y) \in T_{\bar{x}}\mathcal{M} \times \mathbb{Y}.
	\]
	For the cases of NLP, NLSOC, and NLSDP, it follows from \cite[Proposition 5.38]{BS13}, \cite[Theorem 30]{BR05}, and \cite[Theorem 4.1]{S06} that every element in $\partial_C \widehat{F}(0_{\bar{x}}, \bar{y})$ is nonsingular if and only if (iii) holds. It remains to show $\partial_C F(\bar{x}, \bar{y}) = \partial_C \widehat{F}(0_{\bar{x}}, \bar{y})$.
    For $x=R_{\bar{x}}(\xi)$, by the chain rule, we have
		\[\widehat{F}(\xi, y) =
    \begin{bmatrix}
        DR_{\bar{x}}(\xi)^*(\operatorname{grad}  f(x)+Dg(x)^*y) \\
        g_{R_{\bar{x}}}(\xi) - \operatorname{prox}_\theta\left(g_{R_{\bar{x}}}(\xi) + y\right)
    \end{bmatrix}.
	\]
The first components of $F$ and $\widehat{F}$ are equal by applying \cite[Proposition 5.5.6]{AMS09} at the KKT pair $(\bar{x},\bar{y})$.
Define $h(x,y) := g(x) - \operatorname{prox}_\theta\left(g(x)+y\right)$ and $\widehat{h}(\xi,y) := g_{R_{\bar{x}}}(\xi) - \operatorname{prox}_\theta\left(g_{R_{\bar{x}}}(\xi)+y\right)$.
Clearly, $h(x,y) = \widehat{h}(R_{\bar{x}}^{-1}(x), y)$. It follows from \cite[Proposition 1]{ZBD22} that
	\[
	\partial_C h(x, y) \subseteq \{(DR_{\bar{x}}^{-1}(x)^*[u], w) \mid (u, w) \in \partial_C \widehat{h}(R_{\bar{x}}^{-1}(x), y)\}.
	\]
	Moreover, since $DR_{\bar{x}}^{-1}(x)^*[u] = u$ at $\bar{x}$ for any $u$, an extension of \cite[Theorem 2.3.10]{C90} to the Riemannian setting gives
	$
	\partial_C h(\bar{x}, \bar{y}) = \partial_C \widehat{h}(0_{\bar{x}}, \bar{y}).
	$
	Combining the above, we obtain $\partial_C F(\bar{x}, \bar{y}) = \partial_C \widehat{F}(0_{\bar{x}}, \bar{y})$.  
	This completes the proof.
	\qed
\end{proof}

Recently, new results concerning the equivalence between the Aubin property and strong regularity for the KKT solution mappings of NLSOC and NLSDP have been established in \cite{CCSZ241,CCSZ24}. By combining these findings with the classical equivalence results for convex polyhedral programming from \cite{DR96}, we obtain the following proposition.
	
	\begin{proposition}\label{pro:man-strong-reg-aubin}
Suppose that $\operatorname{epi} \theta$ is  a convex polyhedral set, a second-order cone, or a semidefinite cone.
Let $(\bar{x},\bar{y}) \in \mathcal{M}\times \mathbb{Y}$ be a KKT pair of problem~\eqref{eq:problem-mani} satisfying \eqref{eq:man-kkt-origin}.     
Assume that the M-RCQ (\ref{eq:man-rcq-perturb}) holds at $\bar{x}$. 
Then the KKT solution mapping $\mathcal{S}_{\mathrm{KKT}}$ in \eqref{eq:man-kkt-perturb2} has the Aubin property at the origin for $(\bar{x},\bar{y})$ if and only if it is strongly regular at the origin for $(\bar{x},\bar{y})$.
	\end{proposition}

Beyond the above equivalence results, the robust isolated calmness or strong regularity of the KKT solution mapping guarantees a two-sided error bound in a neighborhood of the KKT pair $(\bar{x}, \bar{y})$.  This two-sided error bound plays a crucial role in establishing the local linear convergence of the Riemannian augmented Lagrangian method (RALM), as we will show in Section \ref{sec:ralm}. To formalize this, define the KKT residual mapping $E: \mathcal{M} \times \mathbb{Y} \to \mathbb{R}$ by
\begin{equation}\label{kktresidual}
E(x, y) := \left\|\operatorname{grad} f(x) + Dg(x)^* y 
\right\| + \left\| g(x) - \operatorname{prox}_\theta\left( g(x) + y \right) \right\|, \quad (x, y) \in \mathcal{M} \times \mathbb{Y}.
\end{equation}

\begin{proposition}\label{pro:errorbound}
     Let $(\bar{x},\bar{y}) \in \mathcal{M}\times \mathbb{Y}$ be a KKT pair of problem~\eqref{eq:problem-mani} satisfying \eqref{eq:man-kkt-origin}.
     Suppose that the KKT solution mapping $\mathcal{S}_{\mathrm{KKT}}$ in \eqref{eq:man-kkt-perturb2} is robustly isolated calm (or strongly regular) at $(\bar{x}, \bar{y})$. Then $M(\bar{x}, 0, 0) = \{\bar{y}\}$. Furthermore, there exist constants $c_1, c_2 > 0$ such that, for any $(x, y) \in \mathcal{M} \times \mathbb{Y}$ sufficiently close to $(\bar{x}, \bar{y})$ with $E(x, y)$ sufficiently small, it holds that
	$$
		 c_{1} E(x, y) \leq d(x, \bar{x}) + \|y - \bar{y}\| \leq c_2 E(x, y).
		$$
	 \end{proposition}

 \begin{proof}
	 The conclusion follows by combining Proposition~\ref{pro:man-licq+ssosc-perturb}, \cite[Proposition 5]{ZBD22}, \cite[Theorem 7]{ZBD22}, and the fact that strong regularity implies isolated calmness. 
     \qed
	 \end{proof}
	
\begin{remark}
In \cite{ZBD22}, the results in Proposition~\ref{pro:errorbound}, as well as the equivalence between M-SOSC with M-SRCQ and robust isolated calmness, were established using the normal coordinate chart around the KKT point.In contrast, this paper derives these results via the retraction mapping at the KKT  point $\bar{x}$.
These two approaches are essentially equivalent, since there exists a local diffeomorphism between the normal coordinate chart and the exponential map around $\bar{x}$, and the retraction serves as a first-order approximation of the exponential map.
\end{remark}

\section{Manifold variational sufficiency}\label{sec: variational-sufficiency}

In the previous section, we study the stability of the KKT system of Riemannian nonsmooth optimization problem \eqref{eq:problem-mani}, which involves both the primal variable and its multipliers. These properties offer insights into local convergence, in particular, the local convergence of Newton-type methods (e.g., RSQP), and will be analyzed in detail in subsequent sections. In this section, we focus on another concept related to perturbation analysis, (strong) variational sufficiency, which is recently introduced in \cite{R22}. In Euclidean setting, the strong variational sufficiency is equivalent to the augmented tilt stability \cite{R22}, and serves as a key tool for analyzing the local convergence rate of the ALM in situations where standard assumptions like local convexity or uniqueness of Lagrange multipliers do not hold \cite{R22-ext}. In this section, we shall extend this concept to the Riemannian setting and examine how manifold (strong) variational sufficiency relates to optimality conditions and perturbation analysis.  Its significance in the convergence analysis of RALM will be demonstrated in a later section.

\subsection{Augmented problem and its first-order optimality conditions}
As in Section \ref{subsect:per-problem}, given a KKT point $\bar{x}\in \mathcal{M}$, let $R_{\bar{x}}$ be a retraction at $\bar{x}$ and a diffeomorphism in $\mathcal{U}\subseteq T_{\bar{x}}\mathcal{M}$. Let $\hat{a}=0$ in this section. Consider the following augmented problem for 
    \eqref{eq:problem-mani-perturb}:
	\begin{equation}\label{eq:problem-mani-perturb-augment}
		\begin{array}{ll}
			\min & f(x)+\theta(g(x)+b)+ \frac{\rho}{2}\|b\|^2\\
			\text { s.t. }
			& x\in R_{\bar{x}}(\mathcal{U}).
		\end{array}
	\end{equation}
Locally,  \eqref{eq:problem-mani-perturb-augment} is equivalent to the following augmented problem for \eqref{eq:problem-rn-perturb}:
\begin{equation}\label{eq:problem-rn-perturb-augment}
		\begin{array}{ll}
			\min &  f_{R_{\bar{x}}}(\xi)+\theta(g_{R_{\bar{x}}}(\xi)+b)+ \frac{\rho}{2}\|b\|^2\\
			\text { s.t. } &\xi\in \mathcal{U}.
		\end{array}
\end{equation}
Denote the objective functions by 
     \begin{equation}\label{def:orgobj}
     \varphi(x,b)= f(x)+\theta(g(x)+b),\quad \varphi_{R_{\bar{x}}}(\xi,b)= f_{R_{\bar{x}}}(\xi)+\theta(g_{R_{\bar{x}}}(\xi)+b). 
     \end{equation}
Then the Lagrangian functions for problems \eqref{eq:problem-mani-perturb} and \eqref{eq:problem-rn-perturb} are given, respectively, by 
     \begin{align*}
         l(x, y) & =\inf _b\{\varphi(x, b)-\left\langle y ,b\right\rangle \}=L(x , y)-\theta^*(y), \\
         l_{R_{\bar{x}}}(\xi, y) & =\inf _b\{\varphi_{R_{\bar{x}}}(\xi, b)-\left\langle y ,b\right\rangle \}=L_{R_{\bar{x}}}(\xi , y)-\theta^*(y),
     \end{align*}
where $L$ and $L_{R_{\bar{x}}}$ are defined in \eqref{eq:lag1} and \eqref{eq:lag2}, respectively. 
For $\rho > 0$, the augmented Lagrangian function of problems (\ref{eq:problem-mani-perturb}) and (\ref{eq:problem-rn-perturb}) are given, respectively, by
    \begin{equation}\label{def:auglag}
        l^{\rho}(x, y) =\inf_b\big\{\varphi(x, b)-\left\langle y ,b\right\rangle +\frac{\rho}{2}\|b\|^2\big\}, \quad
        l_{R_{\bar{x}}}^{\rho}(\xi, y) =\inf_b\big\{\varphi_{R_{\bar{x}}}(\xi, b)-\left\langle y ,b\right\rangle +\frac{\rho}{2}\|b\|^2\big\}. 
    \end{equation}
The augmented Lagrangian functions can be regarded as the Lagrangian functions of the augmented objective functions 
    \begin{equation}\label{def:augobj}
    \varphi^{\rho}(x, b)=\varphi(x, b)+\frac{\rho}{2}\|b\|^2,\quad \varphi_{R_{\bar{x}}}^{\rho}(\xi, b)=\varphi_{R_{\bar{x}}}(\xi, b)+\frac{\rho}{2}\|b\|^2, 
    \end{equation}
    respectively. Thus, by definition, $-l(x,\cdot)$ and $-l_{R_{\bar{x}}}(\xi,\cdot)$ are the conjugate functions of $\varphi(x, \cdot)$ and $\varphi_{R_{\bar{x}}}(\xi, \cdot)$, respectively, and similarly, $-l^{\rho}(x,\cdot)$ and $-l_{R_{\bar{x}}}^{\rho}(\xi,\cdot)$ are the conjugate functions of $\varphi^{\rho}(x, \cdot)$ and $\varphi_{R_{\bar{x}}}^{\rho}(\xi, \cdot)$, respectively. 
    Since $\theta$ is closed proper convex, by \cite[Theorem~12.2]{R70}, 
	\begin{equation}\label{eq:conjugate}
	\begin{array}{lcc}
		&\displaystyle\varphi(x, b)=\sup _y\{l(x, y)+\left\langle y ,b\right\rangle\}, \quad&\displaystyle\varphi^{\rho}(x, b)=\sup _y\left\{l^{\rho}(x, y)+\left\langle y ,b\right\rangle\right\},\\
         &\displaystyle \quad\varphi_{R_{\bar{x}}}(\xi, b)=\sup _y\{l_{R_{\bar{x}}}(\xi, y)+\left\langle y ,b\right\rangle\}, &\displaystyle\quad\varphi_{R_{\bar{x}}}^{\rho}(\xi, b)=\sup _y\{l_{R_{\bar{x}}}^{\rho}(\xi, y)+\left\langle y ,b\right\rangle\}.
	\end{array}
\end{equation}


    We now examine the relationships between the first-order  conditions of problems \eqref{eq:problem-mani-perturb} and \eqref{eq:problem-rn-perturb}, together with those of their augmented counterparts \eqref{eq:problem-mani-perturb-augment} and \eqref{eq:problem-rn-perturb-augment}. 
    In fact, the first-order condition in Proposition~\ref{pro:first-order-cond} is also equivalent to the KKT conditions \eqref{eq:man-kkt-origin}.
    

\begin{proposition}\label{pro:first-order-cond}
Let $\rho>0$, $(\bar{x},\bar{y})$ be given, and let $R_{\bar{x}}$ be a retraction at $\bar{x}$.   The following statements are equivalent:
\begin{itemize}[leftmargin=2em]
			\item [\rm (i)] $(\bar{x},\bar{y})$ satisfies the first-order condition  of problem \eqref{eq:problem-mani-perturb}, i.e., 
            $(0, \bar{y}) \in \partial \varphi(\bar{x}, 0)$;
            
			\vspace{1mm}
			\item [\rm (ii)] $(0_{\bar{x}},\bar{y})$ satisfies the first-order condition of problem \eqref{eq:problem-rn-perturb}, i.e., 
            $(0, \bar{y}) \in \partial \varphi_{R_{\bar{x}}}(0_{\bar{x}}, 0)$;
            
            \vspace{1mm}
            \item [\rm (iii)] $(\bar{x},\bar{y})$ satisfies  the first-order condition  of problem \eqref{eq:problem-mani-perturb-augment}, i.e., 
            $(0, \bar{y}) \in \partial \varphi^{\rho}(\bar{x}, 0)$;

            \vspace{1mm}
			\item [\rm (iv)] $(0_{\bar{x}},\bar{y})$ satisfies the first-order condition of problem \eqref{eq:problem-rn-perturb-augment}, i.e., 
            $(0_{\bar{x}},\bar{y})\in \partial \varphi_{R_{\bar{x}}}^{\rho}(0_{\bar{x}}, 0)$;
            
            \vspace{1mm}
			\item [\rm (v)] $\operatorname{grad}_x l(\bar{x},\bar{y})=0$, $0\in\partial_y [-l](\bar{x},\bar{y})$, or $\operatorname{grad}_xL(\bar{x},\bar{y})=0$, $\bar{y}\in\partial \theta(g(\bar{x}))$;

            \vspace{1mm}
			\item [\rm (vi)] $\nabla_{\xi}l_{R_{\bar{x}}}(0_{\bar{x}},\bar{y})=0$, $0\in\partial_y [-l_{R_{\bar{x}}}](0_{\bar{x}},\bar{y})$, or $\nabla_{\xi}L_{R_{\bar{x}}}(0_{\bar{x}},\bar{y})=0$, $\bar{y}\in\partial \theta(g_{R_{\bar{x}}}(0_{\bar{x}}))$;

            \vspace{1mm}
			\item [\rm (vii)] $\operatorname{grad}_xl^{\rho}(\bar{x},\bar{y})=0$, $0\in\nabla_y l^{\rho}(\bar{x},\bar{y})$, or $\operatorname{grad}_xL(\bar{x},\bar{y})=0$, $\nabla \operatorname{env}_{\rho}\theta(g(\bar{x})+\rho^{-1}\bar{y})=\bar{y}$, where $\operatorname{env}_{\rho}\theta$ is the Moreau envelope of $\theta$ defined by 
			$\operatorname{env}_{\rho}\theta(p):=\min_{y \in \mathbb{Y}} \theta(y)+\dfrac{\rho}{2}\|p-y\|^2$; 

            \vspace{1mm}
			\item [\rm (viii)] $\nabla_{\xi}l_{R_{\bar{x}}}^{\rho}(0_{\bar{x}},\bar{y})=0$, $0\in\nabla_y  l_{R_{\bar{x}}}^{\rho}(0_{\bar{x}},\bar{y})$, or $\nabla_{\xi}L_{R_{\bar{x}}}(0_{\bar{x}},\bar{y})=0$, $\nabla \operatorname{env}_{\rho}\theta(g_{R_{\bar{x}}}(0_{\bar{x}})+\rho^{-1}\bar{y})=\bar{y}$.
		\end{itemize}
	\end{proposition}
	\begin{proof}
		It follows from Proposition~\ref{pro:subdiff-mani-tangent} that  $(0, \bar{y}) \in \partial \varphi(\bar{x}, 0) \Longleftrightarrow (0, \bar{y}) \in \partial \varphi_{R_{\bar{x}}}(0_{\bar{x}}, 0)$. Moreover, when $b=0$, it is obvious that $\partial \varphi(\bar{x}, 0)=\partial \varphi^{\rho}(\bar{x}, 0)$ and $\partial \varphi_{R_{\bar{x}}}(0_{\bar{x}}, 0) = \partial \varphi_{R_{\bar{x}}}^{\rho}(0_{\bar{x}}, 0)$, which implies the equivalence of (i), (ii), (iii), and (iv). 
        By \eqref{eq:conjugate} and the chain rules of the subdifferential of manifold functions \cite[Proposition 1]{ZBD22}, we have
        $$(0_{\bar{x}}, \bar{y}) \in \partial \varphi(\bar{x}, 0) \Longleftrightarrow \bar{y} \in \partial \theta(g(\bar{x})), \; 0_{\bar{x}}=\operatorname{grad}_x L(\bar{x}, \bar{y}) 
			 \Longleftrightarrow 0_{\bar{x}} \in \partial_x l(\bar{x}, \bar{y}), \; 0\in \partial_y[-l](\bar{x}, \bar{y}).$$
		Moreover, Proposition \ref{pro:subdiff-mani-tangent} yields
		$$
		\begin{aligned}
			(0_{\bar{x}}, \bar{y}) \in \partial \varphi(\bar{x}, 0) \Longleftrightarrow(0_{\bar{x}}, \bar{y}) \in \partial \varphi_{R_{\bar{x}}}(0_{\bar{x}}, 0)  & \Longleftrightarrow \bar{y} \in \partial \theta(g_{R_{\bar{x}}}(0_{\bar{x}})), \; 0_{\bar{x}}=\nabla_{\xi} l_{R_{\bar{x}}}(0_{\bar{x}}, \bar{y}) \\
			& \Longleftrightarrow 0_{\bar{x}} \in \partial_\xi l_{R_{\bar{x}}}(0_{\bar{x}}, \bar{y}), \; 0 \in \partial_y[-l_{R_{\bar{x}}}](0_{\bar{x}}, \bar{y}).
		\end{aligned}
		$$
		Therefore, statements (i), (ii), (v), and (vi) are equivalent. Similarly, we can prove the equivalence between (i), (ii), (vii) and (viii).\qed
	\end{proof}

\subsection{Characterizations of manifold variational sufficiency}
We now turn to the notion of manifold variational sufficiency.  
Before that, let us first recall the concept of variational convexity in the Euclidean case, as  introduced in \cite{R22}. 
     	Let $f: \mathbb{R}^n \rightarrow(-\infty, \infty]$ be a lsc function. We say that $f$ is  variationally  convex with respect to  $(\bar{w}, \bar{z}) \in \operatorname{gph}\partial f$  if there exist convex open  neighborhoods $\mathcal{W}$ of $\bar{w}$ and $\mathcal{Z}$ of $\bar{z}$, together with a proper lsc convex function $h$ satisfying $h \leq f$ on $\mathcal{W}$, such that
		$$
        [\mathcal{W} \times \mathcal{Z}] \cap \operatorname{gph} \partial h=[\mathcal{W} \times \mathcal{Z}] \cap \operatorname{gph} \partial f,
        $$
		and $h(w)=f(w)$ holds for all $(w, z)$ in this common set. If, in addition, $h$ is strongly convex on $\mathcal{W}$, then we say that $f$ is variationally strongly convex with respect to $(\bar{w},\bar{z})$.   
In the Euclidean setting, the (strong) variational sufficient condition for 
problem $\min\{ \varphi(x,b) \mid b=0\}$ is said to hold with respect to $(\bar{x},\bar{y})$ satisfying the first-order condition $(0, \bar{y}) \in \partial \varphi(\bar{x}, 0)$  at level $\rho>0$  if 
the augmented objective $\varphi^{\rho}$ is variationally (strongly) convex with respect to $((\bar{x},0),(0,\bar{y})) \in \operatorname{gph} \partial \varphi^{\rho}$. Here, $\varphi$ and $\varphi^{\rho}$ are defined in \eqref{def:orgobj} and \eqref{def:augobj}, respectively.

We define the manifold (strong) variational sufficiency for problem~\eqref{eq:problem-mani} through the (strong) variational sufficiency for its locally equivalent formulation \eqref{eq:problem-tangent} on the tangent space, which is problem~\eqref{eq:problem-rn-perturb-augment} with $b=0$.  
This definition is then based on the variational (strong) convexity of the objective $\varphi^{\rho}_{R_{\bar{x}}}$ in problem~\eqref{eq:problem-rn-perturb-augment}, and therefore depends on the choice of the retraction $R_{\bar{x}}$.  
We will later establish, however, that the notion of manifold (strong) variational sufficiency is in fact independent of the particular retraction selected.
 
	\begin{definition}\label{def:vari-suff-mani}
    Let $(\bar{x},\bar{y}) \in \mathcal{M}\times \mathbb{Y}$ be a KKT pair of problem~\eqref{eq:problem-mani} satisfying \eqref{eq:man-kkt-origin}. 
    With respect to $(\bar{x},\bar{y})$ at level $\rho>0$, the manifold (strong) variational sufficient condition for  problem (\ref{eq:problem-mani}) under the retraction $R_{\bar{x}}$ is said to hold  if the (strong) variational sufficient condition holds for problem (\ref{eq:problem-tangent}).
	\end{definition}

Motivated by the recent developments in \cite{R22,R22-ext}, we next discuss the local augmented dual property for problem \eqref{eq:problem-mani}. In particular, the following proposition establishes the equivalence between the manifold (strong) variational sufficiency and the (strong) convexity of the augmented Lagrangian function, which follows directly from \cite[Theorem 1]{R22} and Proposition \ref{pro:subdiff-mani-tangent}.
\begin{proposition}\label{pro:vari-suff-al-conv}
 Let $(\bar{x},\bar{y}) \in \mathcal{M}\times \mathbb{Y}$ be a KKT pair of problem~\eqref{eq:problem-mani} satisfying \eqref{eq:man-kkt-origin}. 
 The manifold (strong) variational sufficient condition for problem \eqref{eq:problem-mani} under the retraction $R_{\bar{x}}$ holds  with respect to $(\bar{x}, \bar{y})$  at level $\rho >0$ if and only if there exists a closed convex neighborhood $\mathcal{W} \times \mathcal{Y}$ of $(0_{\bar{x}}, \bar{y})$ such that
\begin{itemize}[leftmargin=2em]
	\item[\rm (i)] $l^{\rho}_{R_{\bar{x}}}(\xi, y)$ is locally (strongly) convex at $0_{\bar{x}}$ for every $y \in \mathcal{Y}$;

    \vspace{1mm}
	\item[\rm (ii)] $l^{\rho}_{R_{\bar{x}}}(\xi, y)$ is concave in $y$ for every $\xi \in \mathcal{W} \subseteq B_{\bar{x}}(r_{R_{\bar{x}}})$.
\end{itemize}
Under these conditions, $(\bar{x}, \bar{y})$ is a saddle point of $l^{\rho}(x, y)$ with respect to minimization over $x \in R_{\bar{x}}(\mathcal{W})$ and maximization over $y \in \mathcal{Y}$. Moreover, these properties hold for any $\rho' \geq \rho$.  
\end{proposition}
\begin{proof}
	By applying \cite[Theorem 1]{R22} and Proposition \ref{pro:subdiff-mani-tangent} to (\ref{eq:problem-tangent}) at $(0_{\bar{x}},\bar{y})$, we obtain that $l_{R_{\bar{x}}}^{\rho}(\xi, y)$ is convex in $\xi \in \mathcal{W}$ for every $y \in \mathcal{Y}$, and concave in $y \in \mathcal{Y}$ for every $\xi \in \mathcal{W}\subseteq B_{\bar{x}}(r_{R_{\bar{x}}})$. 
	It remains to show that $(\bar{x}, \bar{y})$ is a saddle point of $l^{\rho}(x, y)$ in $R_{\bar{x}}(\mathcal{W})\times\mathcal{Y}$, or that $l^{\rho}(x, y)$ attains its minimum at $\bar{x}$ in $R_{\bar{x}}(\mathcal{W})$. This is shown by
	$
	l^{\rho}\left( \bar{x},\bar{y}\right) =l_{R_{\bar{x}}}^{\rho}\left( 0_{\bar{x}},\bar{y}\right) \leq  l_{R_{\bar{x}}}^{\rho}\left( R_{\bar{x}}^{-1}(x),\bar{y}\right) =l^{\rho}(x,\bar{y})$ for any $x\in R_{\bar{x}}(\mathcal{W}).
	$
	Hence we complete the proof.\qed
\end{proof}

In the Euclidean setting, a key feature of the strong variational sufficiency is its equivalence to the SSOSC. In our work, we further reveal the intrinsic relationship between the manifold strong variational sufficiency and the M-SSOSC; see Theorem~\ref{thm:strong-vari-suff-ssosc}.

 For a differentiable function $ f:\mathcal{M}\to \mathbb{R} $ with a locally Lipschitz continuous gradient and a given retraction $ R_{\bar{x}} $, the Hessian bundle of $ f_{R_{\bar{x}}} $ at $ 0_{\bar{x}} $ is defined as
$$
 \overline{\nabla}^2 f_{R_{\bar{x}}}(0_{\bar{x}}) := \left\{H \;\middle\vert\; \exists\, \xi_k \to 0_{\bar{x}} \text{ with } \nabla^2 f_{R_{\bar{x}}}(\xi_k) \to H, \; \xi_k \in T_{\bar{x}}\mathcal{M} \right\},
 $$
 where $\nabla^2 f_{R_{\bar{x}}}(\cdot)$ denotes the Euclidean Hessian of $ f_{R_{\bar{x}}}(\cdot)$.
 For the augmented Lagrangian function $ l_{R_{\bar{x}}}^{\rho}(\xi,y) $ in \eqref{def:auglag}, consider any matrix $ H \in \overline{\nabla}^2 l_{R_{\bar{x}}}^{\rho}(\xi,y) $. we  partition $ H $ into four blocks 
 $
H = 
\begin{bmatrix}
    H_{\xi\xi} & H_{\xi y}\\
    H_{y\xi}   & H_{yy}
\end{bmatrix},
$
 and define 
$ \displaystyle
\overline{\nabla}_{\xi\xi}^2 l_{R_{\bar{x}}}^{\rho}(0_{\bar{x}},y) := \left\{ H_{\xi\xi} \mid H \in \overline{\nabla}^2 l_{R_{\bar{x}}}^{\rho}(\xi,y) \right\}.
 $
Moreover,  $ \mathcal{C}_{\theta, g}(x, y) = \mathcal{C}_{\theta, g_{R_x}}(0_x, y) $ holds if $ \mathcal{C}_{\theta, g_{R_x}}(0_x, y) = \left\{d \in \mathbb{Y} \mid \theta^{\downarrow}(g_{R_x}(0_x); d) = \langle d, y \rangle\right\}$. By leveraging \cite[Theorem 3]{R22}, we can now establish the connection between the manifold strong variational sufficiency and the M-SSOSC for problem \eqref{eq:problem-mani}, as stated in the following theorem.

\begin{theorem}\label{thm:strong-vari-suff-ssosc}
     Let $(\bar{x},\bar{y}) \in \mathcal{M}\times \mathbb{Y}$ be a KKT pair of problem~\eqref{eq:problem-mani} satisfying \eqref{eq:man-kkt-origin}. 
     Then, the manifold strong variational sufficient condition for problem \eqref{eq:problem-mani} under the retraction $R_{\bar{x}}$ holds  with respect to $(\bar{x}, \bar{y})$  at level $\rho > 0$ if and only if every matrix in $\overline{\nabla}_{\xi\xi}^2 l_{R_{\bar{x}}}^{\rho}(0_{\bar{x}}, \bar{y})$ is positive definite. Moreover, any $H_{\xi\xi} \in \overline{\nabla}_{\xi\xi}^2 l_{R_{\bar{x}}}^{\rho}(0_{\bar{x}}, \bar{y})$ can be written in the form
	\begin{equation*}
		H_{\xi\xi}=\operatorname{Hess}_x L(\bar{x}, \bar{y}) + Dg(\bar{x})^* G\, \operatorname{grad} g(\bar{x}) \quad \text{for some} \quad G \in \overline{\nabla}^2 \operatorname{env}_{\rho}\theta\left(g(\bar{x}) + \rho^{-1} \bar{y}\right).
	\end{equation*}
	If $\theta$ is a polyhedral convex function, then the manifold strong variational sufficient condition is equivalent to the following  M-SSOSC at $(\bar{x}, \bar{y})$:
	\begin{equation*}
		\left\langle \xi, \operatorname{Hess}_x L(\bar{x}, \bar{y}) \xi \right\rangle > 0 \quad \forall\, Dg(\bar{x})\xi \in \operatorname{aff}\mathcal{C}_{\theta, g}(\bar{x}, \bar{y}) \setminus \{0\}.
	\end{equation*}
	Furthermore, if $\theta$ is the indicator function of a second-order cone or a positive semidefinite cone, then the manifold strong variational sufficient condition is equivalent to the M-SSOSC at $(\bar{x}, \bar{y})$ given in \eqref{eq:ssosc-mani}, where $\mathcal{K}$ denotes the respective cone.
\end{theorem}

\begin{proof}
	By \cite[Theorem 3]{R22}, every $H_{\xi\xi}\in \overline{\nabla}_{\xi\xi}^2  l_{R_{\bar{x}}}^{\rho}(0_{\bar{x}},\bar{y})$ is positive definite and  of the form
	$$
	H_{\xi\xi}=\nabla_{\xi\xi}^2  l_{R_{\bar{x}}}(0_{\bar{x}}, \bar{y})+ \nabla g_{R_{\bar{x}}}(0_{\bar{x}})^* G \nabla g_{R_{\bar{x}}}(0_{\bar{x}})\;\text {for some}\; G \in \overline{\nabla}^2 \operatorname{env}_{\rho}\theta\left(g_{R_{\bar{x}}}(0_{\bar{x}})+\rho^{-1} \bar{y}\right).
	$$
    Since $\operatorname{grad}_x L(\bar{x},\bar{y})=0$, the definition of the retraction together with \eqref{eq:hess-tang-mani} shows that this representation of $H_{\xi\xi}$ coincides with the expression stated in the theorem.
    
    If $\theta$ is polyhedral convex, \cite[Theorem 4]{R22} shows that strong variational sufficiency holds if and only if
	$$
	\left\langle\xi, \nabla_{\xi\xi}^2   l_{R_{\bar{x}}}\left(0_{\bar{x}}, \bar{y}\right) \xi\right\rangle>0 \quad \forall\,  \,g^{\prime}_{R_{\bar{x}}}(0_{\bar{x}})\xi \in \operatorname{aff}\mathcal{C}_{\theta,g_{R_{\bar{x}}}}\left(0_{\bar{x}},\bar{y}\right) \backslash\{0\}.
	$$
    This condition is equivalent to the M-SSOSC stated in the theorem, since  
    \[
        \nabla_{\xi\xi}^2 l_{R_{\bar{x}}}(0_{\bar{x}},\bar{y}) = \operatorname{Hess}_x L(\bar{x}, \bar{y}), 
        \quad g^{\prime}_{R_{\bar{x}}}(0_{\bar{x}})\xi = Dg(\bar{x})\xi, 
        \quad \mathcal{C}_{\theta,g_{R_{\bar{x}}}}(0_{\bar{x}},\bar{y})=\mathcal{C}_{\theta,g}(\bar{x},\bar{y}).
    \]
    
     Finally, if $\theta$ is the indicator function of a second-order cone or a positive semidefinite cone, 
	the equivalence follows from  \cite{WDZZ22} and the above analysis. Therefore the proof is completed.\qed
\end{proof}

\begin{remark}\label{rm:suff-independent}
	Theorem \ref{thm:strong-vari-suff-ssosc} shows that the matrices in the Hessian bundle of $l_{R_{\bar{x}}}^{\rho}(0_{\bar{x}},\bar{y})$ are independent of the particular choice of retraction $R_{\bar{x}}$. This fact can also be established directly. Given two retractions $R_1$ and $R_2$ at $\bar{x}$, it suffices to prove that the Hessian bundles $\overline{\nabla}_{\xi\xi}^2 l_{R_1}^{\rho}(0_{\bar{x}}, \bar{y})$ and $\overline{\nabla}_{\xi\xi}^2 l_{R_2}^{\rho}(0_{\bar{x}}, \bar{y})$ are identical. Specifically, for any $ H \in \overline{\nabla}_{\xi\xi}^2 l_{R_2}^{\rho}(0_{\bar{x}}, \bar{y})$, there exists a sequence $ \{\xi_k\} $ converging to $ 0_{\bar{x}}$ such that $ \nabla_{\xi\xi}^2 l_{R_2}^{\rho}(\xi_k, \bar{y})\to H$. Let $W_{\bar{x}}:=R_1^{-1}R_2$. Then we have 
	$$
 \nabla_{\xi\xi}^2 l_{R_2}^{\rho}(\xi_k, \bar{y})= W^{\prime}_{\bar{x}}(\xi_k)^* \nabla_{\xi\xi}^2 l_{R_1}^{\rho}(W_{\bar{x}}(\xi_k), \bar{y}) \nabla W_{\bar{x}}(\xi_k) + \nabla (W^{\prime}_{\bar{x}}(\xi_k)^*) \nabla_{\xi\xi}^2 l_{R_1}^{\rho}(W_{\bar{x}}(\xi_k), \bar{y}).
	$$
	It follows from the definition of $W_{\bar{x}}$ that when $\xi_k\to 0_{\bar{x}}$,  $\nabla W_{\bar{x}}(\xi_k)\to \operatorname{id}$ and $\nabla(W^{\prime}_{\bar{x}}(\xi_k)^*)\to 0_{\bar{x}}$. Therefore, if $ \nabla_{\xi\xi}^2 l_{R_2}^{\rho}(\xi_k, \bar{y})\to H$, then there exists a sequence $\{W_{\bar{x}}(\xi_k)\}$ converge to $ 0_{\bar{x}}$ such that $ \nabla_{\xi\xi}^2 l_{R_1}^{\rho}(W_{\bar{x}}(\xi_k), \bar{y}) \to H$. By symmetry, exchanging $R_1$ and $R_2$ yields $\overline{\nabla}_{\xi\xi}^2 l_{R_1}^{\rho}(0_{\bar{x}}, \bar{y})=\overline{\nabla}_{\xi\xi}^2 l_{R_2}^{\rho}(0_{\bar{x}}, \bar{y})$. Therefore, the Hessian bundle does not depend on the choice of retraction. Consequently,  the manifold strong variational sufficient condition is an intrinsic property of manifold optimization problems, inherently independent of the choice of retraction. 
\end{remark}

\begin{remark}\label{rm:convexity}
	Building on the above construction, we can introduce a pointwise retraction‑based local convexity on manifolds. A function $f:{\cal M}\to (-\infty,\infty]$ is  said to be (strongly) locally retraction‑convex at $x\in{\cal M}$ with respect to a retraction $R_x$ if $f_{R_x}$ is locally (strongly) convex on the tangent space $T_x\mathcal{M}$. Likewise, for a pair $(x, z) \in \operatorname{gph}\partial f$, we say that $f$ is variationally (strongly) convex at $(x, z)$ under the retraction $R_x$ when $f_{R_x}$ enjoys  Euclidean variational (strong) convexity at $(0_{x},z)\in \operatorname{gph}\partial f_{R_{x}}$ on $T_x\mathcal{M}\times T_x\mathcal{M}$. This concept is closely related to the retraction-convexity introduced in \cite[Definition 3.2]{HW22}, yet differs in the following respect: \cite{HW22} imposes the property on a neighborhood of $x$, whereas we investigate it at the point $x$ itself. Moreover, we  establish that if $f$ is continuously differentiable and $f_{R_x}$ is prox‑regular (\cite[13.27]{RW98}), then the local strong retraction‑convexity is invariant under the choice of retraction. Extending this invariance to weaker smoothness assumptions or to functions that are convex but not strongly convex remains an open question for future study.
\end{remark}


Another important perturbation property that plays a crucial role in the convergence analysis of the ALM is augmented tilt stability. Tilt stability characterizes the sensitivity of the optimal solution set to small linear perturbations of the objective or the constraints, and thus serves as a fundamental measure of the robustness of an optimization problem. Recently, Rockafellar extended this concept to augmented tilt stability and employed it to derive local convergence rates of ALM for non‑convex and nonsmooth problems in the Euclidean setting \cite{R22-ext}. Inspired by this, we investigate augmented tilt stability for optimization on Riemannian manifolds, and in Section~\ref{sec:ralm}, we show how it underpins the convergence analysis of manifold algorithms. We now formalize the notion of augmented tilt stability on a manifold with respect to a chosen retraction and subsequently characterize it in terms of manifold strong variational sufficiency.


\begin{definition}\label{def:augmented-tilt-stab-mani}
Within the framework of  Proposition~\ref{pro:vari-suff-al-conv}, which establishes the local convexity-concavity of $l^{\rho}(x, y)$ on the  neighborhoods $\mathcal{W} \times \mathcal{Y}$, the manifold augmented tilt stability is said to hold at $\bar{x} \in \mathcal{M}$ with respect to the retraction $R_{\bar{x}}$ if there exists a neighborhood $\mathcal{V}$ of $0_{\bar{x}}$ such that the following mapping is single-valued and Lipschitz continuous:
	\[
	(v, y) \mapsto \underset{x \in R_{\bar{x}}(\mathcal{W})}{\operatorname{argmin}}\left\{ l^{\rho}(x, y) - \left\langle v, R_{\bar{x}}^{-1}(x) \right\rangle \right\} \quad \text{for} \quad (v, y) \in \mathcal{V} \times \mathcal{Y}.
	\]
\end{definition}



\begin{proposition}\label{pro:vari-strong-suff-al-strong-conv}
	The manifold strong variational sufficient condition holds at $(\bar{x},\bar{y})$ with respect to $R_{\bar{x}}$ for local optimality of problem~\eqref{eq:problem-mani} at level ${\rho}$ if and only if the manifold augmented tilt stability holds at $\bar{x}$ with respect to $R_{\bar{x}}$.
\end{proposition}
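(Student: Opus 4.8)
The plan is to transfer the Euclidean augmented tilt stability already available for the tangent-space problem \eqref{eq:perturb-problem-tangent} over to the manifold via the retraction $R_{\bar{x}}$. By Definition~\ref{def:vari-suff-mani}, the hypothesis that the manifold strong variational sufficient condition holds at $(\bar{x},\bar{y})$ at level $\bar{\rho}$ means precisely that the strong variational sufficiency holds for problem \eqref{eq:perturb-problem-tangent} at $(0_{\bar{x}},\bar{y})$ at level $\bar{\rho}$. Since $T_{\bar{x}}\mathcal{M}$ is a Euclidean space, I would apply \cite[Theorem 2]{R22} directly (as already noted in the paragraph preceding the statement) to conclude that the Euclidean augmented tilt stability holds at $0_{\bar{x}}$ for \eqref{eq:perturb-problem-tangent}: there exist neighborhoods $\mathcal{V}$ of $0$ and $\mathcal{Y}$ of $\bar{y}$ such that the tangent-space tilt solution mapping
$$S_T(v,y):=\underset{\xi\in\mathcal{W}}{\operatorname{argmin}}\big\{l_{R_{\bar{x}}}^{\rho}(\xi,y)-\langle v,\xi\rangle\big\},\qquad (v,y)\in\mathcal{V}\times\mathcal{Y},$$
is single-valued and Lipschitz continuous in the Euclidean norm, with values in $\mathcal{W}\subseteq B_{\bar{x}}(r_{R_{\bar{x}}})$.

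The key step is to identify $S_T$ with the manifold tilt mapping of Definition~\ref{def:augmented-tilt-stab-mani} through the change of variable $x=R_{\bar{x}}(\xi)$. Using the identity $l_{R_{\bar{x}}}^{\rho}(\xi,y)=l^{\rho}(R_{\bar{x}}(\xi),y)$ for $\xi\in B_{\bar{x}}(r_{R_{\bar{x}}})$ (which follows from Definition~\ref{def:function-tangent} and is already exploited in the proof of Proposition~\ref{pro:vari-suff-al-conv}), substituting $\xi=R_{\bar{x}}^{-1}(x)$ gives
$$l^{\rho}(x,y)-\langle v,R_{\bar{x}}^{-1}(x)\rangle=l_{R_{\bar{x}}}^{\rho}\big(R_{\bar{x}}^{-1}(x),y\big)-\big\langle v,R_{\bar{x}}^{-1}(x)\big\rangle$$
for every $x\in R_{\bar{x}}(\mathcal{W})$. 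Because $R_{\bar{x}}$ is a diffeomorphism on $B_{\bar{x}}(r_{R_{\bar{x}}})$, the map $x=R_{\bar{x}}(\xi)$ is a bijection between $R_{\bar{x}}(\mathcal{W})$ and $\mathcal{W}$ under which the two objectives agree pointwise, so the minimizers correspond. Hence the manifold tilt solution mapping satisfies
$$S_{\mathcal{M}}(v,y):=\underset{x\in R_{\bar{x}}(\mathcal{W})}{\operatorname{argmin}}\big\{l^{\rho}(x,y)-\langle v,R_{\bar{x}}^{-1}(x)\rangle\big\}=R_{\bar{x}}\big(S_T(v,y)\big).$$

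It remains to check that $S_{\mathcal{M}}=R_{\bar{x}}\circ S_T$ inherits single-valuedness and Lipschitz continuity. Single-valuedness is immediate, since $S_T$ is single-valued and $R_{\bar{x}}$ is a function. For Lipschitz continuity I would shrink $\mathcal{V}$ if necessary so that $S_T(\mathcal{V}\times\mathcal{Y})$ is contained in a set with compact closure inside $B_{\bar{x}}(r_{R_{\bar{x}}})$; on such a set the smooth map $R_{\bar{x}}$ is Lipschitz with respect to the Riemannian distance, i.e. $d(R_{\bar{x}}(\xi_1),R_{\bar{x}}(\xi_2))\le L_R\|\xi_1-\xi_2\|$, and composing with the Euclidean Lipschitz bound for $S_T$ yields $d(S_{\mathcal{M}}(v_1,y_1),S_{\mathcal{M}}(v_2,y_2))\le L_R L_{S_T}\|(v_1,y_1)-(v_2,y_2)\|$, which is exactly the manifold augmented tilt stability. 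The main obstacle I anticipate is this last point: verifying that the smoothness of $R_{\bar{x}}$ actually delivers Lipschitz continuity measured in the Riemannian distance $d(\cdot,\cdot)$ rather than merely in a chart norm, which is where the compactness of the relevant neighborhood and the definition of $d$ must be invoked carefully.
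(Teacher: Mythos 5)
Your proposal is correct and follows essentially the same route as the paper's proof: both establish the identity $\operatorname{argmin}_{x\in R_{\bar{x}}(\mathcal{W})}\{l^{\rho}(x,y)-\langle v,R_{\bar{x}}^{-1}(x)\rangle\}=R_{\bar{x}}\bigl(\operatorname{argmin}_{\xi\in\mathcal{W}}\{l_{R_{\bar{x}}}^{\rho}(\xi,y)-\langle v,\xi\rangle\}\bigr)$ via the change of variable $x=R_{\bar{x}}(\xi)$ and then invoke \cite[Theorem 2]{R22} for the tangent-space problem. The only difference is that you spell out the transfer of Lipschitz continuity through $R_{\bar{x}}$ (using its Lipschitzness, in the Riemannian distance, on a compact subset of the injectivity ball), a step the paper's proof asserts as an unargued equivalence between the two tilt mappings.
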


\begin{proof}
For $(v,y)\in\mathcal{V}\times\mathcal{Y}$, denote 
	$$
    h(v,y):=\underset{x \in R_{\bar{x}}\left( \mathcal{W}\right) }{\operatorname{argmin}}\left\{l^{\rho}(x, y)-\left\langle v, R_{\bar{x}}^{-1}(x)\right\rangle \right\}\quad \mbox{and} 
    \quad 
    h_{R_{\bar{x}}}(v, y) :=\underset{\xi \in  \mathcal{W} }{\operatorname{argmin}}\left\{l_{R_{\bar{x}}}^{\rho}(\xi, y)-\left\langle v, \xi\right\rangle \right\}.
    $$ 
	If $x\in h(v,y)$, then $R_{\bar{x}}^{-1}(x)$ is a minimizer of $l_{R_{\bar{x}}}^{\rho}(\xi, y)-\left\langle v, \xi\right\rangle$, implying  $h(v,y)\subseteq R_{\bar{x}}(h_{R_{\bar{x}}}(v, y))$. 
    The converse inclusion holds as well, and thus $h(v,y)=R_{\bar{x}} (h_{R_{\bar{x}}}(v, y))$. Therefore, $h(v,y)$ is single-valued and Lipschitz continuous if and only if  $h_{R_{\bar{x}}}(v,y)$ is single-valued and Lipschitz continuous. Thus, the augmented tilt stability holds at both $\bar{x}$ and $0_{\bar{x}}$. 
	Finally,  since $l^{\rho}_{R_{\bar{x}}}(\xi,\cdot)$ is concave in $\mathcal{Y}$ for each $\xi\in \mathcal{W}$, the desired result follows directly from  \cite[Theorem 2]{R22}. \qed
\end{proof}

\section{Application I: Riemannian sequential quadratic programming }\label{sec:rsqp}

Riemannian sequential quadratic programming (RSQP) extends the classical Euclidean sequential quadratic programming (SQP) to updates performed in the tangent spaces of a manifold. Classical Euclidean SQP generally enjoys strong theoretical guarantees and numerical efficiency. The foundational papers of Han \cite{H77} and Powell \cite{P77} proved local convergence under strict complementarity and the SOSC. 
    For more details of Euclidean SQP methods, we refer the reader to \cite{BT95}. In contrast, the theoretical development of RSQP is less mature. Schiela and Ortiz \cite{SO20} analyzed RSQP for equality constrained problems on manifolds and proved local convergence. More recently,  Obara, Okuno, and Takeda \cite{OOT22} extended the method to accommodate both equality and inequality constraints, demonstrating global convergence and, under manifold linear independence constraint qualification (M-LICQ), M-SOSC, and strict complementarity, attaining local quadratic convergence.
     

     In this section, we investigate the local convergence rate of the RSQP using the perturbation results obtained in Section \ref{sec:regular}. In particular, we  show that without requiring strict complementarity, RSQP attains superlinear local convergence under the M-SRCQ and M-SOSC conditions. We begin by outlining the RSQP framework for problem \eqref{eq:problem-mani} in Algorithm~\ref{alg:rsqp}.
\begin{algorithm}[htbp]
	\caption{Riemannian sequential quadratic programming (RSQP)  for solving  \eqref{eq:problem-mani}}
	\label{alg:rsqp}
    \normalsize 
    \begin{spacing}{1.15}
    \hangindent=0.5em \hangafter=0
{\bf Input:}  
	 $(x^{0},y^0)\in \mathcal{M}\times\mathbb{Y}$, $v\in (0,1]$, a nonnegative sequence $\{\delta^k\}$  converging to zero. Set $k=0$.
	\begin{algorithmic}[1]
		\State If $(x^{k},y^k)$ satisfies a suitable termination criterion: STOP.
		\State Choose a symmetric positive semidefinite vector field $M^k$. 
       \State Find $\xi^k\in T_{x^k}\mathcal{M}$ as an approximate solution to the subproblem
        		\begin{equation}\label{eq:rsqp}
\min \  Df(x^k)\xi +\frac{1}{2}\langle\xi , M^k\xi\rangle+\theta(g(x^k)+Dg(x^k)\xi) \quad { \rm s.t. } \quad \xi\in T_{x^k}\mathcal{M} ,
		\end{equation}
		 equivalently via finding $(\xi^k,\gamma^k)$ satisfying the following system
		\begin{equation}\label{eq:newton-man-kkt}
		\tilde{\delta}^k\in \begin{bmatrix}
				\operatorname{grad}_{x} L(x^k,y^k)\\
				-g(x^k)
			\end{bmatrix}+\begin{bmatrix}
				M^k& Dg(x^k)^*\\
				-Dg(x^k) &0
			\end{bmatrix}\begin{bmatrix}
				\xi\\
				\gamma
			\end{bmatrix}+ \begin{bmatrix}
				0\\
				\partial \theta^{*}(y^{k}+\gamma^k)
			\end{bmatrix},
		\end{equation}
		where $\tilde{\delta}^k:=\min\left\{\delta^k,\left\|\begin{bmatrix}
			\operatorname{grad}_{x} L(x^k,y^k)\\
			g(x^k)-\operatorname{prox}_{\theta}(g(x^k)+y^k)
		\end{bmatrix}\right\|^{1+v}\right\}$ controls the inexactness. 
		\State 	Compute $x^{k+1}=R_{x^k}(\xi^k)$ and $y^{k+1}=y^k+\gamma^k$.
        \State Set $k=k+1$ and go to step 1.
	\end{algorithmic}
    \end{spacing}
\end{algorithm}

Since $M^k$ is  symmetric positive semidefinite, the subproblem \eqref{eq:rsqp} is convex. And $\xi$ solves \eqref{eq:rsqp} if there exists a multiplier $\eta$ such that the pair $(\xi,\eta)$ satisfies the KKT conditions
 \begin{equation}\label{eq:rsqp-kkt}
 	\operatorname{grad}  f(x^k)+M^k\xi+Dg(x^k)^*\eta=0, \quad 
 	\eta\in \partial \theta(g(x^k)+Dg(x^k)\xi).
 \end{equation}	
We now interpret the RSQP  (Algorithm~\ref{alg:rsqp})  as a Newton-type method \cite{B94} for solving the KKT system \eqref{eq:man-kkt-origin} of problem \eqref{eq:problem-mani}. 
Since $\theta$ is  closed convex, by \cite[Theorem 23.5]{R70},
the KKT system \eqref{eq:man-kkt-origin} is equivalent to 
$$
0\in \begin{bmatrix}
	\operatorname{grad}_{x} L(x,y)\\
	-g(x)
\end{bmatrix}+\begin{bmatrix}
0\\
	\partial \theta^*(y)
\end{bmatrix}.
$$
Applying the Newton-type methods in \cite{B94} to this system yields the iteration \eqref{eq:newton-man-kkt} with $\tilde{\delta}^k=0$.
By direct computation, one verifies that $(\xi^k,\gamma^k)$ solves \eqref{eq:newton-man-kkt} with $\tilde{\delta}^k=0$ if and only if $(\xi^k,y^k+\gamma^k)$ solve \eqref{eq:rsqp-kkt}, i.e., $\xi^k$ solves \eqref{eq:rsqp}. Hence, the Newton iteration and the RSQP iteration are equivalent.
Moreover, if $\theta = 0$ and $M^k$ is chosen as the Hessian of $f$, Algorithm~\ref{alg:rsqp} reduces to a Riemannian Newton method.  If $M^k$ is taken as the generalized Jacobian of $\operatorname{grad}f$ and  certain semismoothness conditions are met, Algorithm~\ref{alg:rsqp} becomes a semismooth Newton algorithm.
This close connection between  RSQP  and Newton-type methods motivates the establishment of local superlinear convergence, as demonstrated in the following proposition.

\begin{proposition}\label{pro:newton-conv}
    Let $f$ and $g$ be twice continuously differentiable. Let $(\bar{x},\bar{y}) \in \mathcal{M}\times \mathbb{Y}$ be a KKT pair of problem~\eqref{eq:problem-mani} satisfying \eqref{eq:man-kkt-origin} and suppose that the KKT solution mapping $\mathcal{S}_{\mathrm{KKT}}$ in \eqref{eq:man-kkt-perturb2} is robust isolated calm (or strongly regular) at $(\bar{x},\bar{y}) $. Let  $\{(\xi^k,\gamma^k,x^k, y^k)\}$ be the sequence generated by Algorithm~\ref{alg:rsqp}, and define 
    $$
\calJbar^k := 
	\begin{bmatrix}
		P_{\bar{x}x^k} \operatorname{Hess}_xL(\bar{x}, \bar{y}) & Dg(\bar{x})^* \\
		-P_{\bar{x}x^k}Dg(\bar{x}) & 0
	\end{bmatrix} \quad \mbox{and}\quad 
	\mathcal{J}^k := 
	\begin{bmatrix}
		M^k & Dg(x^k)^* \\
		-Dg(x^k) & 0
	\end{bmatrix}.
	$$
    Assume  $(x^k, y^k) \to (\bar{x},\bar{y})$.
	If $$
    	(\calJbar^k - \mathcal{J}^k)
    [ \xi^k ; \gamma^k]
    = o(\|[ \xi^k ; \gamma^k] \|),$$ then $ (x^k, y^k)\to (\bar{x},\bar{y})$ converges superlinearly. Moreover, if 
    $$ (\calJbar^k - \mathcal{J}^k)
    [ \xi^k ; \gamma^k] 
    = O(\|[ \xi^k ; \gamma^k] \|^2),$$ and the parameter $v$ in Algorithm~\ref{alg:rsqp} is set to $v=1$, then $ (x^k, y^k)\to (\bar{x},\bar{y})$ converges quadratically.
\end{proposition}

\begin{proof}
    Let $\bar{z} = (\bar{x},\bar{y})$, $z^k = (x^k,y^k)$, $\zeta^k = [\xi^k;\gamma^k]$, $	\mu^k=	(\calJbar^k - \mathcal{J}^k)\zeta^k$, and
	$$
	r^k=\mu^k+\tilde{\delta}^k+ \begin{bmatrix}
		P_{x^{k+1}x^k}\operatorname{grad}_{x} L(x^{k+1},y^{k+1})\\
		-g(x^{k+1})
	\end{bmatrix} - \begin{bmatrix}
		\operatorname{grad}_{x} L(x^k,y^k)\\
		-g(x^k)
	\end{bmatrix} -	\calJbar^k \zeta^k.
	$$
	Then \eqref{eq:newton-man-kkt} 
    can be equivalently written as
	\begin{equation}\label{eq:rsqp-kkt-perturb}
			r^k\in \begin{bmatrix}
			P_{x^{k+1}x^k}\operatorname{grad}_{x} L(x^{k+1},y^{k+1})\\
			-g(x^{k+1})
		\end{bmatrix} + \begin{bmatrix}
			0\\
			\partial \theta^{*}(y^{k+1})
		\end{bmatrix}.
	\end{equation}
Since $L(x, y) = f(x) + \langle g(x), y \rangle$ is twice continuously differentiable in $x$, and $\tilde{\delta}^k = o(d(z^k, \bar{z}))$ due to the local Lipschitz property of both $\operatorname{grad}_x L(x, y)$ and $\operatorname{prox}_\theta(g(x)+y)$ with respect to $(x,y)$ at $\bar{z}$, it follows that, for sufficiently large $k$,
$
r^k = \mu^k + o(\|\zeta^k\|) + o(d(z^k, \bar{z})).
$
By the condition that $\mu^k=o(\|\zeta^k\|)$, we have
	$$
	\begin{aligned}
		&d(z^{k+1},\bar{z})
		=O(r^k)=o(\|\zeta^k\|)+o(d(z^k, \bar{z}))\\
		=&o(d(z^{k+1},z^{k}))+o(d(z^k, \bar{z}))
		=o (d(z^{k+1},\bar{z})+d(z^{k},\bar{z}) )+o(d(z^k, \bar{z})).
	\end{aligned}
	$$
The first equality follows from  the isolated calmness of $\bar{z}$ to the perturbed KKT system \eqref{eq:rsqp-kkt-perturb}, and the third equality uses the fact that $R_{x^k}$ is a local homomorphism. Hence $d(z^{k+1},\bar{z})=o(d(z^{k},\bar{z}))$, which establishes superlinear convergence of $\{z^k\}$.

Now suppose $ (\calJbar^k - \mathcal{J}^k)
    [ \xi^k ; \gamma^k] 
    = O(\|[ \xi^k ; \gamma^k] \|^2)$. 
First, $z^k\to \bar{z}$ converges superlinearly implies that $d(z^{k+1},z^k) /d(z^{k},\bar{z}) \rightarrow 1$. 
By the assumption that $f$ and $g$ are twice continuously differentiable, the mapping  
$
(x,y)\;\mapsto\; 
\begin{bmatrix}
\operatorname{Hess}_xL(x,y) & Dg(x)^* \\
-Dg(x) & 0
\end{bmatrix}
$
is locally Lipschitz at $\bar{z}$, with some Lipschitz constant $c>0$.
        For $k$ sufficiently large, we have
    $$
	\begin{aligned}
	&\left\|	\begin{bmatrix}
			P_{x^{k+1}x^k}\operatorname{grad}_{x} L(z^{k+1})\\
			-g(x^{k+1})
		\end{bmatrix} - \begin{bmatrix}
			\operatorname{grad}_{x} L(z^k)\\
			-g(x^k)
		\end{bmatrix} -\calJbar^k\zeta^k \right\|\\
		\leq \ &\left\|\begin{bmatrix}
			P_{x^{k+1}x^k}\operatorname{grad}_{x} L(z^{k+1})\\
			-g(x^{k+1})
		\end{bmatrix} - \begin{bmatrix}
			\operatorname{grad}_{x} L(z^k)\\
			-g(x^k)
		\end{bmatrix} -\begin{bmatrix}
		P_{x^{k+1}x^k}\operatorname{Hess}_xL(z^{k+1})& Dg(x^{k+1})^*\\
		-Dg(x^{k+1}) &0
		\end{bmatrix}\zeta^k \right\|\\
		&+ \left\|\begin{bmatrix}
			P_{x^{k+1}x^k}\operatorname{Hess}_xL(z^{k+1})& Dg(x^{k+1})^*\\
			-Dg(x^{k+1}) &0
		\end{bmatrix}-\calJbar^k\right\|   \left\|\zeta^k \right\|\\
			\leq \ & O(\|\zeta^k\|^2)+ c\|\zeta^k\|d(z^k,\bar{z}).
	\end{aligned}
	$$
	Again, applying the isolated calmness of $\bar{z}$ to the perturbed KKT system \eqref{eq:rsqp-kkt-perturb}, and noting that $v=1$ implies that $\tilde{\delta}^k=O(d(z^k,\bar{z})^2)$, we then obtain that
		$$
	d(z^{k+1},\bar{z})=O(r^k)=O(\|\zeta^k\|^2)+ L\|\zeta^k\|d(z^k,\bar{z})+O(d(z^k,\bar{z})^2).$$
Since $d(z^{k+1},z^k) /d(z^{k},\bar{z}) \to 1$ and $R_{x^k}$ is a local homomorphism, we conclude that
	$
	d(z^{k+1},\bar{z})=	O(d(z^{k+1},z^{k})^2)=O(d(z^{k},\bar{z})^2).
	$
	The proof is then completed. \qed
\end{proof}


To satisfy the hypotheses of Proposition~\ref{pro:newton-conv}, the following notion of hemistability was introduced by Bonnans \cite[Definition 2.2]{B94} under the Euclidean setting. We generalize this concept to Riemannian manifolds and, to accommodate inexact computations, formulate a perturbed hemistability condition that covers inexact subproblem solves and data perturbations in the following definition.
\begin{definition}
A point $\bar{z}=(\bar{x},\bar{y})$ is called a  hemistable solution of the KKT system~\eqref{eq:man-kkt-origin} if the following holds:  
for any $\alpha>0$, there exist constants $\bar{\delta}>0$ and $\varepsilon>0$ such that, for every $\hat{z}=(\hat{x},\hat{y})\in \mathcal{M}\times\mathbb{Y}$ and every symmetric positive semidefinite tangent vector $M\in T_{\hat{x}}\mathcal{M}$ satisfying
\[
d(\hat{z},\bar{z})+\left\|
\begin{bmatrix}
	P_{\bar{x}\hat{x}}\operatorname{Hess}_xL(\bar{x},\bar{y}) & Dg(\bar{x})^* \\
	-P_{\bar{x}\hat{x}}Dg(\bar{x}) & 0
\end{bmatrix}
-
\begin{bmatrix}
	M & Dg(\hat{x})^* \\
	-Dg(\hat{x}) & 0
\end{bmatrix}\right\| < \varepsilon,
\]
the following system
\[
\delta \in 
\begin{bmatrix}
	\operatorname{grad}_{x} L(\hat{x},\hat{y}) \\
	-g(\hat{x})
\end{bmatrix}
+
\begin{bmatrix}
	M & Dg(\hat{x})^* \\
	-Dg(\hat{x}) & 0
\end{bmatrix}
\begin{bmatrix}
	\xi \\ \gamma
\end{bmatrix}
+
\begin{bmatrix}
	0 \\
	\partial \theta^{*}(\hat{y}+\gamma)
\end{bmatrix},
\quad \text{for any $\delta\in[0,\bar{\delta}]$,}
\]
admits a solution $(\xi,\gamma)$ such that
$
z = \bigl(R_{\hat{x}}(\xi), \, \hat{y}+\gamma \bigr)
$ satisfies $ d(z,\bar{z}) \leq \alpha.
$
\end{definition}


Now we are able to present the following local convergence results of 
Algorithm~\ref{alg:rsqp}, when $M^k$ is chosen as the Riemannian Hessian of the Lagrangian function $L(x^k,y^k)$ at each step.

\begin{theorem} [Local convergence of RSQP]\label{thm:con-rsqp}
Let $f$ and $g$ be twice continuously differentiable. Let $\bar{z}=(\bar{x},\bar{y}) \in \mathcal{M}\times \mathbb{Y}$ be a KKT pair of problem~\eqref{eq:problem-mani} satisfying \eqref{eq:man-kkt-origin}.
Assume that $\bar{z}$ is a hemistable solution of \eqref{eq:man-kkt-origin}, the M-SOSC \eqref{eq:man-sosc-perturb} holds at $\bar{x}$, and the M-SRCQ \eqref{eq:man-srcq-perturb} holds at $\bar{x}$  with respect to $\bar{y}$. Choose $M^k = \operatorname{Hess}_xL(x^k, y^k)$ in Algorithm~\ref{alg:rsqp}. Let  $\{(x^k, y^k)\}$ be the sequence generated by Algorithm~\ref{alg:rsqp}, then $\{(x^k, y^k)\}$ converges superlinearly to $(\bar{x},\bar{y})$. Moreover, if the parameter $v$ in Algorithm~\ref{alg:rsqp} is set to $v=1$, then the convergence is quadratic.

\end{theorem}

\begin{proof}
	By Proposition \ref{pro:man-licq+ssosc-perturb}, M-SOSC and  M-SRCQ hold at $\bar{z}$ implies that $\mathcal{S}_{\mathrm{KKT}}$ is robust isolated calm at $\bar{z}$. Therefore, given $\delta=(\hat{a},b)$ in a neighborhood of $(0_{\bar{x}},0)$, there exist $\varepsilon_1,\kappa>0$ such that for any $z\in \mathcal{S}_{\mathrm{KKT}}(\delta)$ and $d(z,\bar{z})\leq \varepsilon_1$, we have	$d(z,\bar{z})\leq \kappa\|\delta\|$. Now choose $\alpha\leq \min (\varepsilon_1, 1 / 3 \kappa)$. Taking $M^k=\operatorname{Hess}_xL(x^k,y^k)$, the hemistability of $\bar{z}$ yields that for the given $\alpha$, there exist $\varepsilon_2\geq 0$ and $c>0$ such that for any $\tilde{\delta}^k\leq c$ and for
	 $z^k$ satisfying 
	$$d(z^k,\bar{z})+\left\|\calJbar^k-\mathcal{J}^k\right\|<\varepsilon_2,$$ 
	the system
	$$
	\tilde{\delta}^k\in \begin{bmatrix}
		\operatorname{grad}_{x} L(x^k,y^k)\\
		-g(x^k)
	\end{bmatrix}+\mathcal{J}^k\begin{bmatrix}
		\xi\\
		\gamma
	\end{bmatrix}+ \begin{bmatrix}
		0\\
		\partial \theta^{*}(y^k+\gamma^k)
	\end{bmatrix}
	$$
	has a solution $(\xi^k,\gamma^k)$ such that $z^{k+1}=(R_{x^k}(\xi^k),y^k+\gamma^k)$ and $d(z^{k+1},\bar{z})\leq \alpha$. Now let $\zeta^k=[\xi^k;\gamma^k]$, and denote
	$$
\eta^k:=\tilde{\delta}^k + \begin{bmatrix}
		P_{x^{k+1}x^k}\operatorname{grad}_{x} L(x^{k+1},y^{k+1})\\
		-g(x^{k+1})
	\end{bmatrix} - \begin{bmatrix}
		\operatorname{grad}_{x} L(x^k,y^k)\\
		-g(x^k)
	\end{bmatrix} -\mathcal{J}^k\zeta^k,
	$$
	then we have
	$$
	\eta^k\in \begin{bmatrix}
		P_{x^{k+1}x^k}\operatorname{grad}_{x} L(x^{k+1},y^{k+1})\\
		-g(x^{k+1})
	\end{bmatrix} + \begin{bmatrix}
		0\\
		\partial \theta^{*}(y^{k+1})
	\end{bmatrix}.
	$$
	By the homomorphism of $R_{x^k}$ and its local Lipschitz continuity, there exists $\mu>0$, such that $\|\xi^k\|\leq \mu d(R_{x^k}(\xi^k),x^k)$. Since $\tilde{\delta}^k=o(d(z^k,\bar{z}))$, there exists $\varepsilon_3>0$ such that $\|\tilde{\delta}^k\|\leq \frac{1}{6\kappa}d(z^k,\bar{z})$ for $d(z^k,\bar{z})\leq \varepsilon_3$. Shrinking $\alpha$,  $\varepsilon_2$ and $\varepsilon_3$ if necessary, the twice continuous differentiability of $L(x,y)$ ensures that 
	$
	\left\|\eta^k\right\| \leq \frac{1}{3 \kappa\mu}\left\|\zeta^k\right\|+\|\tilde{\delta}^k\| .
	$
	Therefore, by the isolated calmness, we have
	$$
	d(z^{k+1},\bar{z}) \leq \kappa	\left\|\eta^k\right\| \leq \frac{1}{3\mu}\left\|\zeta^k\right\| +\kappa\|\tilde{\delta}^k\|
		\leq \frac{1}{3}d(z^{k+1},z^k)+\frac{1}{6}d(z^k,\bar{z})
		\leq \frac{1}{3}d(z^{k+1},\bar{z})+\frac{1}{2}d(z^k,\bar{z}),
	$$
which implies $d(z^{k+1},\bar{z}) \leq \frac{3}{4}d(z^{k},\bar{z})$ and thus $\{z^{k}\}\to \bar{z}$.  Now the local convergence rate can be directly obtained by using Proposition \ref{pro:newton-conv}.  \qed
\end{proof}

Theorem~\ref{thm:con-rsqp} establishes  convergence results when $M^k = \operatorname{Hess}_xL(x^k, y^k)$. In fact, the result extends to broader choices of the approximate Hessians $M^k$. For example,  $M^k$ can be generated by quasi-Newton updates, provided the sequence remains uniformly positive definite and satisfies standard secant conditions. The Euclidean analysis is presented in \cite{B94}, and the same arguments extend to the Riemannian setting by working on tangent spaces via retraction. For brevity, we omit the routine details.




In \cite{B94}, it was shown that the robust isolated calmness of the KKT solution mapping implies hemistability under the polyhedral assumption of $\theta$ with $\delta=0$. This result was later extended in \cite{MMS20} to the case where $\theta$ is parabolically regular, a property introduced in \cite[Definition 13.65]{RW98} and shown in \cite{MMS21} to include the $\mathcal{C}^2$-cone reducible case. In the Riemannian setting, we establish that under the parabolic regularity assumption of $\theta$, the robust isolated calmness of the KKT solution mapping still implies hemistability. Furthermore, our result 
strengthens \cite[Theorem~5.1]{MMS20} by allowing for inexact cases.

\begin{proposition}\label{pro:poly-delta-hemi}
Let $f$ and $g$ be twice continuously differentiable and  $\theta$ be parabolically regular. Let $(\bar{x},\bar{y}) \in \mathcal{M}\times \mathbb{Y}$ be a KKT pair of problem~\eqref{eq:problem-mani} satisfying \eqref{eq:man-kkt-origin}.
Assume that the M-SOSC \eqref{eq:man-sosc-perturb} holds at $\bar{x}$ and the M-SRCQ \eqref{eq:man-srcq-perturb} holds at $\bar{x}$  with respect to $\bar{y}$. Then  $(\bar{x}, \bar{y})$  is a hemistable solution of \eqref{eq:man-kkt-origin}.
\end{proposition}
\begin{proof}
 It can be verified that $(0_{\bar{x}},\bar{y})$ is a solution of the KKT system \eqref{eq:rsqp-kkt} associated with the RSQP subproblem \eqref{eq:rsqp} at $\bar{x}$.
 Moreover, the SRCQ condition for \eqref{eq:rsqp-kkt} at this point takes the form
$$
Dg(\bar{x})T_{\bar{x}}\mathcal{M}+\mathcal{T}_{\operatorname{dom}(\theta)}(g(\bar{x}))\cap\bar{y}^{\perp}=\mathbb{Y},
$$
which coincides with the M-SRCQ condition for problem \eqref{eq:problem-mani} at $(\bar{x},\bar{y})$.
 Hence, the SRCQ condition holds under the M-SRCQ assumption. Likewise, the SOSC condition for \eqref{eq:rsqp-kkt} is also satisfied at $(0_{\bar{x}},\bar{y})$, as it has the same structure as the M-SOSC condition for \eqref{eq:problem-mani} at $(\bar{x},\bar{y})$. Given that $\theta$ is parabolically regular, we can apply \cite[Theorem 4.2]{MMS20} and obtain that for any neighborhood $\mathcal{U}_1$ of $\bar{z}$, there exists a neighborhood $\mathcal{U}_2$ of $\bar{z}$ such that for any $\hat{z} \in \mathcal{U}_2$, the perturbed KKT system \eqref{eq:rsqp-kkt-perturb} has a solution within $\mathcal{U}_1$ when $\delta$ is sufficiently small. This confirms the hemistability of the system at $\bar{z}$.
	\qed
\end{proof}

By combining Theorem~\ref{thm:con-rsqp} and Proposition~\ref{pro:poly-delta-hemi}, we are now ready to present the main result of this section.

\begin{proposition}\label{pro:poly-delta-hemi2}
Let $f$ and $g$ be twice continuously differentiable and  $\theta$ be parabolically regular. Let $(\bar{x},\bar{y}) \in \mathcal{M}\times \mathbb{Y}$ be a KKT pair of problem~\eqref{eq:problem-mani} satisfying \eqref{eq:man-kkt-origin}.
Assume that the M-SOSC \eqref{eq:man-sosc-perturb} holds at $\bar{x}$ and the M-SRCQ \eqref{eq:man-srcq-perturb} holds at $\bar{x}$  with respect to $\bar{y}$. Choose $M^k = \operatorname{Hess}_xL(x^k, y^k)$ in Algorithm~\ref{alg:rsqp}. Let  $\{(x^k, y^k)\}$ be the sequence generated by Algorithm~\ref{alg:rsqp}, then $(x^k,y^k)$ converges superlinearly (quadratically if $v=1$) to $(\bar{x}, \bar{y})$.
\end{proposition}

\begin{remark}
This result relaxes the assumptions in \cite{OOT22}, where quadratic convergence of RSQP was established under   M-LICQ  (equivalent to manifold constraint nondegeneracy in 
NLP on Riemannian manifolds) together with M-SOSC. In contrast, our analysis requires  the M-SRCQ, which is a weaker requirement compared with M-LICQ, together with M-SOSC.  To illustrate this, we provide a toy example in which M-SRCQ holds but manifold constraint nondegeneracy (and thus M-LICQ) fails. 
    Consider the optimization on the sphere $\mathcal{S}^1$:
	\begin{equation*}\label{exam:sphere}  
		\begin{array}{ll}
			\min & \|x\|_2^2\\
			\text { s.t. } 
			& x\in \mathcal{P}=\{x_1+x_2\geq 1,\ x_1-x_2\leq 1,\ x_2\geq 0\},\\
			& x_1^2+x_2^2=1.
		\end{array} 
	\end{equation*}
 It can be verified that  $x = (1,0)^{\top}$, $y = (0,0)^{\top}$ is a KKT pair. Now let $g(x):=x$. By definitions \eqref{eq:man-srcq-perturb} and \eqref{eq:man-cn-perturb}, the M-SRCQ is said to hold at $(x,y)$ if 
$$
Dg(x)T_x\mathcal{S}^1 + \mathcal{T}_{\mathcal{P}}(g(x))\cap y^{\perp}= \mathbb{R}^2,
$$
and the manifold constraint nondegeneracy holds at $x$ if
$$
Dg(x)T_x\mathcal{S}^1 + \operatorname{lin}(\mathcal{T}_{\mathcal{P}}(g(x))) = \mathbb{R}^2.
$$
Since $\mathcal{T}_{\mathcal{P}}(g(x))=\{\xi\mid\xi_2\geq|\xi_1|\}$, we have $\operatorname{lin}(\mathcal{T}_{\mathcal{P}}(g(x)))=0$. Combining that $Dg(x)T_x\mathcal{S}^1 =T_x\mathcal{S}^1 =\{\xi\mid x^{\top}\xi=0\}=\{\xi\mid\xi_1=0,\ \xi_2\in\mathbb{R}\}$, it is easy to see that the manifold constraint nondegeneracy does not hold at $x=(1,0)^{\top}$. However, since $y = (0,0)^{\top}$ and $y^{\perp}=\mathbb{R}^2$, this implies that $$
	Dg(x)T_x\mathcal{S}^1 +\mathcal{T}_{\mathcal{P}}(g(x))\cap y^{\perp}=\{\xi\mid\xi_1=0,\ \xi_2\in\mathbb{R}\}+ \{\xi\mid\xi_2\geq|\xi_1|\}=\mathbb{R}^2,
$$
which means that the M-SRCQ holds at this point.
\end{remark}

We now discuss the RSQP algorithm and compare it with the ManPG~\cite{CMMZ20} and AManPG~\cite{HW19} methods. Consider the case where $\mathcal{M}$ is an embedded submanifold and $g$ is the identity mapping, i.e., $g(x) = x$ for all $x \in \mathcal{M}$. In this case, the RSQP subproblem~\eqref{eq:rsqp} simplifies to
    	\begin{equation*}
\min \  Df(x^k)\xi +\frac{1}{2}\langle\xi , M^k\xi\rangle+\theta(x^k+\xi) \quad { \rm s.t. } \quad \xi\in T_{x^k}\mathcal{M} .
		\end{equation*}
This formulation closely resembles that in~\cite[Algorithm 1]{HW19}. However,  the ManPG and AManPG  guarantee convergence without convergence rates. In contrast, our RSQP (Algorithm~\ref{alg:rsqp}) achieves at least local superlinear convergence rate under mild assumptions (M-SRCQ and M-SOSC), as established in Theorem~\ref{thm:con-rsqp} and Proposition~\ref{pro:poly-delta-hemi2},  without requiring acceleration techniques.
Furthermore, the RSQP  bears structural resemblance to the ManPQN algorithm~\cite{WY23}, but with a critical distinction: our update matrix $M^k$ approximates the Hessian of the Lagrangian function, rather than merely $\operatorname{Hess} f(x)$. This adjustment enables RSQP to achieve superlinear convergence rate, outperforming the linear convergence rate of ManPQN.
Additionally, we note that the recently proposed Riemannian proximal Newton method (RPN)~\cite{SAHJV24} also attains superlinear convergence. However, RPN follows a two-stage procedure: it first computes a proximal gradient direction via ManPG, followed by a Riemannian Newton update. In contrast, RSQP achieves superlinear convergence in a single stage, offering a simpler and more direct approach.

\section{Application II: Riemannian augmented Lagrangian method}\label{sec:ralm}

Building on the stability results obtained in previous sections, especially the manifold (strong) variational sufficiency developed in Section \ref{sec: variational-sufficiency}, we will analyze  the local convergence  of the Riemannian Augmented Lagrangian Method (RALM) in this section as an application. 

The classical Euclidean ALM was initially proposed by Hestenes \cite{H69} and Powell \cite{P69} for equality constrained problems and later extended to NLP by Rockafellar \cite{R73}. Over the decades, the convergence analysis of ALM in the Euclidean setting has been extensively studied.  The RALM for nonsmooth manifold optimization was recently introduced in \cite{ZBDZ21} and becomes a powerful tool for solving constrained optimization problems on manifolds (see also \cite{DP19} and \cite{LB20}). In \cite{LB20}, the  convergence of the ALM to a KKT point is established under the linear independence constraint qualification. 
Similarly, in \cite{ZBDZ21},  the convergence of the RALM to a KKT point is established under suitable conditions, together with numerical experiments. 
Recent advancements, such as those in \cite{YS22l} and \cite{ACFH24}, have introduced approximate KKT conditions and reformulated constraint qualifications to enhance convergence guarantees. Regarding the local convergence analysis of the RALM, \cite{ZBD22} established the Q-linear convergence rate of RALM under the M-SOSC and M-SRCQ conditions, which implies the uniqueness of the multiplier. 

Prior analyses often relied on the uniqueness of the  multiplier, which simplifies the analysis but limits applicability when multiple multipliers exist. Inspired by recent Euclidean advances \cite{R22-ext}, where a local dual problem under variational sufficiency and a proximal point framework yield Q-linear rates for PPA and, in turn, R-linear rates for ALM, we may remove the   assumption of unique multiplier  in the manifold setting by employing the manifold strong variational sufficiency for local optimality proposed in Section \ref{sec: variational-sufficiency}.

\subsection{Convergence analysis under manifold variational sufficiency}
In this subsection, we will discuss the convergence of the RALM for solving problem (\ref{eq:problem-mani}) when the manifold variational sufficiency holds, see Definition~\ref{def:vari-suff-mani} and Proposition~\ref{pro:vari-suff-al-conv}.
We begin by presenting the RALM framework for  problem \eqref{eq:problem-mani} in Algorithm~\ref{alg:ralm}.
\begin{algorithm}[htbp]
	\caption{Riemannian augmented Lagrangian method (RALM) for solving \eqref{eq:problem-mani}}
	\label{alg:ralm}
    \normalsize 
    \begin{spacing}{1.15}
	\hspace*{0.02in} {\bf Input:}  
	$(x^{0},y^0)\in \mathcal{M}\times\mathbb{Y}$, $\rho_0 > {\rho}$. Set $k=0$.
	\begin{algorithmic}[1]
		\State If $(x^{k},y^k)$ satisfies a suitable termination criterion: STOP.
		\State Find an approximate solution $x^{k+1}$ 
			\begin{equation}\label{eq:alm-subproblem}
						x^{k+1}\approx \bar{x}^{k+1} = {\operatorname{argmin}}\; l^{\rho_k}(x, y^k) \quad { \rm s.t. } \quad x \in R_{\bar{x}}(\mathcal{W}).
				\end{equation}		
		\State 	 Update the multiplier  
	$
				y^{k+1}=y^k+\tilde{\rho}_{k} \nabla_y   l^{\rho_k}(x^{k+1}, y^k)$, with $ \tilde{\rho}_{k}=\rho_{k}-{\rho}.
$
			\State 	Update $\rho_{k+1} \geq \rho_{k} $. 
            \State Set $k=k+1$ and go to step 1.
	\end{algorithmic}
    \end{spacing}
\end{algorithm}

 When using the tangent-space formulation, the RALM can also be written as
\begin{equation}\label{eq:alm-tangent-vs}
	\left\{\begin{array}{l}
		\xi^{k+1} \approx \bar{\xi}^{k+1}:= {\operatorname{argmin}}\; l_{R_{\bar{x}}}^{\rho_k}(\xi, y^k) \quad {\rm s.t.} \quad \xi \in \mathcal{W},\\[4pt]
		x^{k+1}=R_{\bar{x}}(\xi^{k+1}),\\[4pt]
		y^{k+1}=y^k+\tilde{\rho}_{k} \nabla_y   l_{R_{\bar{x}}}^{\rho_k}(\xi^{k+1}, y^k).
	\end{array}\right.
\end{equation}
The first  iteration of \eqref{eq:alm-tangent-vs} for finding $\xi^{k+1}$ can be considered as a traditional inexact ALM step in the tangent space, and the second  iteration pulls $\xi^{k+1}$ back to the manifold using the retraction $R_{\bar{x}}$.
Now we begin to analyze the convergence of the RALM (Algorithm~\ref{alg:ralm}). 


For subproblem \eqref{eq:alm-subproblem}, three  conditions for measuring the inexactness  of $x^{k+1}$ are given in \cite[(1.15)]{R22-ext}
\begin{equation}\label{eq:alm-app-mani}
\left(2 \tilde{\rho}_{k}
	\Big[l^{\rho_{k}} (x^{k+1}, y^k )-\inf _{x\in R_{\bar{x}}(\mathcal{W})} l^{\rho_{k}} (x, y^k ) \Big] \right)^{\frac{1}{2}} \leq \begin{cases}(\text{a}) & \varepsilon_k, \\ (\text{b}) & \varepsilon_k \min  \{1,\|\tilde{\rho}_{k} \nabla_y l^{\rho_{k}} (x^{k+1}, y^k )\| \}, \\ (\text{c}) & \varepsilon_k \min \big\{1,\|\tilde{\rho}_{k}\nabla_y l^{\rho_{k}} (x^{k+1}, y^k )\|^2\big\}.\end{cases}
\end{equation}
Here, $\varepsilon_k \in (0,1)$ for all $k \geq 0$, and $\sum_{k=0}^\infty \varepsilon_k = \sigma < \infty$. As a counterpart, for (\ref{eq:alm-tangent-vs}), three  conditions for measuring the inexactness  of $\xi^{k+1}$ are given by
\begin{equation}\label{eq:alm-app-tangent}
		 \left(2  \tilde{\rho}_{k}
	\Big[  l_{R_{\bar{x}}}^{\rho_k} (\xi^{k+1}, y^k )-\inf _{\xi\in\mathcal{W}}    l_{R_{\bar{x}}}^{\rho_k} (\xi, y^k ) \Big] \right)^{\frac{1}{2}} \leq \begin{cases}(\text{a}^{\prime})  & \varepsilon_k, \\ (\text{b}^{\prime})  & \varepsilon_k \min  \{1,\|\tilde{\rho}_{k} \nabla_y    l_{R_{\bar{x}}}^{\rho_k} (\xi^{k+1}, y^k )\| \}, \\ (\text{c}^{\prime})  & \varepsilon_k \min \big\{1,\|\tilde{\rho}_{k}\nabla_y    l_{R_{\bar{x}}}^{\rho_k} (\xi^{k+1}, y^k )\|^2\big\}.\end{cases}
\end{equation}
Algorithm~\ref{alg:ralm} under the stopping criteria \eqref{eq:alm-app-mani} is equivalent to the iteration  \eqref{eq:alm-tangent-vs} under the stopping criteria \eqref{eq:alm-app-tangent}.

We now present the convergence analysis of Algorithm~\ref{alg:ralm} under the manifold variational sufficiency. {A point $\hat{x}\in \mathcal{M}$ is called a local optimal solution of \eqref{eq:problem-mani} over $\mathcal{U}\subseteq\mathcal{M}$ if $f(\hat{x})+\theta(g(\hat{x}))\leq f(x)+\theta(g(x))$ for any $x\in \mathcal{U}$.}
Following the approach of \cite{R22-ext}, the main idea is to first establish the local augmented duality for problem~\eqref{eq:problem-mani}, and then apply the PPA to the local dual problem to derive the convergence of RALM.

	\begin{proposition}\label{pro:converge-alm}
        Let $(\bar{x},\bar{y}) \in \mathcal{M}\times \mathbb{Y}$ be a KKT pair of problem~\eqref{eq:problem-mani} satisfying \eqref{eq:man-kkt-origin}. 
        Assume that  the manifold variational sufficient condition for problem~\eqref{eq:problem-mani} under the retraction $R_{\bar{x}}$ holds with respect to $(\bar{x}, \bar{y})$ at level $\rho > 0$. Let $\mathcal{W}$ and $\mathcal{Y}$ be the closed convex neighborhoods in Proposition~\ref{pro:vari-suff-al-conv}. Suppose that the set ${\operatorname{argmin}}_{\xi \in \mathcal{W}}\; l^{{\rho}}_{R_{\bar{x}}}(\xi, y)$ is nonempty and bounded for any $y \in \operatorname{int} \mathcal{Y}$. {Denote $Z := \arg\max_{y \in \mathcal{Y}} \inf_{x\in R_{\bar{x}}(\mathcal{W})} l^{\rho}(x, y)$. Assume there exists $p>\operatorname{dist}(y^0, Z) + \sigma$ such that $	\{y \mid \|y - y^0\| < 3p \} \subset \mathcal{Y}$.}
        Let  $\{(x^k, y^k)\}$ be the sequence generated by Algorithm~\ref{alg:ralm} under  \eqref{eq:alm-app-mani}.
		Then $\{x^k\} \subseteq R_{\bar{x}}(\mathcal{W})$ is bounded, and every accumulation point of $\{x^k\}$ is a local optimal solution of problem \eqref{eq:problem-mani} over $R_{\bar{x}}(\mathcal{W})$.
\end{proposition}

\begin{proof}

In fact, the update of $(\xi^k,y^k)$ in  \eqref{eq:alm-tangent-vs} corresponds to a standard Euclidean ALM iteration. 
This structure allows us to analyze its convergence using a dual approach.
	Define the function $H(y) := \inf_{x\in R_{\bar{x}}(\mathcal{W})} l^{\rho}(x, y)$ for $y \in \mathcal{Y}$ and $H(y) := -\infty$ otherwise. Then it holds that  $H_{R_{\bar{x}}}(y):=\inf_{\xi\in\mathcal{W}} l^{\rho}_{R_{\bar{x}}}(\xi, y)$  for $y \in \mathcal{Y}$.
The local primal and dual problems associated with \eqref{eq:problem-tangent} are given by $\min_{\xi\in\mathcal{W}}\; \sup_{y \in \mathcal{Y}} l^{\rho}_{R_{\bar{x}}}(\xi, y)$ and $	\max_{y \in \mathcal{Y}}\; H_{R_{\bar{x}}}(y)$, respectively. By applying the proof of \cite[Theorem 2.1]{R22-ext} to these local problems at a saddle point $(0_{\bar{x}},\bar{y})$ of  $l_{R_{\bar{x}}}^{\rho}$ on $ \mathcal{W}\times  \mathcal{Y}$, we conclude that both optimal values are attained at $(0_{\bar{x}}, \bar{y})$, and that $0_{\bar{x}}$ is the solution to \eqref{eq:problem-tangent} over $\mathcal{W}$.

We now apply the PPA 
to the local dual problem. The solution set of the dual problem is denoted by
$Z = \arg\max_{y \in \mathcal{Y}} H_{R_{\bar{x}}}(y) = \arg\max_{y \in \mathcal{Y}} H(y)$. Then for $c_k>0$, the PPA iteration is
\begin{equation*}
	 		y^{k+1} \approx 
\bar{y}^{k+1} 
=\underset{y}{\operatorname{argmax}}\big\{ H_{R_{\bar{x}}}^k(y):= H_{R_{\bar{x}}}(y)-\frac{1}{2 {c_k}}\left\|y-y^k\right\|^2\big\}.
	 	\end{equation*}
	 	The stopping criteria for PPA follow the standard approximate conditions as specified in \cite[(1.5), (1.6)]{R21}. By applying \cite[Theorem 2.3]{R22-ext} to the update of $(\xi^k,y^k)$ in  \eqref{eq:alm-tangent-vs}, 
        we conclude that $\{y^k\}$ corresponds to an inexact PPA sequence with $c_k = \tilde{\rho}_k$, and the stopping criteria for PPA are related to 
        \eqref{eq:alm-app-tangent}. Consequently, $\{y^k\}$ converges to a local maximizer of $H_{R_{\bar{x}}}(y)$ over $y\in \mathcal{Y}$. Moreover, the sequence $\{\xi^{k+1}\}$ is bounded, and its accumulation points form solutions of the local primal problem. Finally, by the relationships that $x^{k+1} = R_{\bar{x}}(\xi^{k+1})$ and $l_{R_{\bar{x}}}^{\rho_k}(\xi^{k+1}, y^k) = l^{\rho_k}(x^{k+1}, y^k)$, the desired result follows. \qed
\end{proof}

\begin{remark}

We should emphasize that the PPA used in the proof of Proposition \ref{pro:converge-alm} differs from the one developed in \cite{FO02}. In \cite{FO02}, the authors proposed a PPA on Hadamard manifolds and analyzed its convergence under geodesic convexity assumptions. In contrast, the PPA considered here is constructed for the local dual problem around the KKT point $(0_{\bar{x}}, \bar{y})$ and is solely used to analyze the convergence behavior of RALM. It is essentially a standard Euclidean algorithm and cannot be directly applied to solve the original Riemannian problem \eqref{eq:problem-mani}. Rather, it serves as a theoretical tool in our convergence analysis.

\end{remark}

\begin{corollary}\label{cor:alm-converge-strong-convex}
Let $(\bar{x},\bar{y}) \in \mathcal{M}\times \mathbb{Y}$ be a KKT pair of problem~\eqref{eq:problem-mani} satisfying \eqref{eq:man-kkt-origin}. 
Assume that  the manifold strong variational sufficient condition for problem~\eqref{eq:problem-mani} under the retraction $R_{\bar{x}}$ holds with respect to $(\bar{x}, \bar{y})$ at level $\rho > 0$.
Then the sequence $\{x^k\}$ generated by Algorithm~\ref{alg:ralm}  converges to  $\bar{x}$.
\end{corollary}
\begin{proof}
	The strong sufficiency refers to the isolated minimizing property of $\bar{x}$. By Proposition \ref{pro:converge-alm}, $x^k$ will converge to the unique solution $\bar{x}$. \qed
\end{proof}

Now we assume that the manifold strong variational sufficiency holds at $(\bar{x},\bar{y})$ in the remaining 
part of this section. The locally strong convexity of $   l_{R_{\bar{x}}}^{\rho_k}(\cdot,y^k)$ implies that each RALM subproblem \eqref{eq:alm-subproblem}  has a unique solution. Next we give 
the local convergence rate of $\{x^k\}$ generated by Algorithm~\ref{alg:ralm}. 
	
	\begin{theorem} [Local convergence  of RALM]\label{thm:conv-rate-ralm}
Let $(\bar{x},\bar{y}) \in \mathcal{M}\times \mathbb{Y}$ be a KKT pair of problem~\eqref{eq:problem-mani} satisfying \eqref{eq:man-kkt-origin}. 
Assume that  the manifold strong variational sufficient condition for problem~\eqref{eq:problem-mani} under the retraction $R_{\bar{x}}$ holds with respect to $(\bar{x}, \bar{y})$ at level $\rho > 0$. Let $\{(x^k,y^k)\}$ be the sequence generated by Algorithm~\ref{alg:ralm} under \eqref{eq:alm-app-mani}.  Under the assumptions in Proposition~\ref{pro:converge-alm}, $\{y^k\}$ converges to $\bar{y}$, and consequently  $x^k\to \bar{x}$. Moreover,  if $y^k \to \bar{y}$ converges Q-linearly, and   $\|\operatorname{grad}_x l^{\rho_k}(x^{k+1}, y^k)\| \leq c\|y^{k+1}-y^k\|$ for some $c>0$,
	then $x^k \to \bar{x}$ also converges R-linearly at the same rate.
    
	\end{theorem}
	\begin{proof}	

	Similar to the proof of Proposition \ref{pro:converge-alm}, by applying \cite[Theorem 3.1]{R22-ext} to  the update of $(\xi^k,y^k)$ in  \eqref{eq:alm-tangent-vs} 
    and using  $x^{k+1} = R_{\bar{x}}(\xi^{k+1})$, we obtain the convergence $x^k \to \bar{x}$. According to the proof of \cite[Theorem 2]{R22}, the strong convexity of the function $l_{R_{\bar{x}}}^{\rho}(\cdot, y)$ implies that the mapping
$\lambda(v,y):=\argmin_{\xi\in\mathcal{W}}\{  l_{R_{\bar{x}}}^{\rho}(\xi,y)- \langle v,\xi \rangle \} $ is Lipschitz continuous with modulus $s^{-1}$. Since $0_{\bar{x}} = \lambda(0_{\bar{x}}, \bar{y})$ and $\xi^{k+1} = \lambda ( \nabla_{\xi} l_{R_{\bar{x}}}^{\rho_k}(\xi^{k+1}, y^k), y^k  )$, the Lipschitz continuity of $\lambda$ yields
	$$
d(x^{k+1},\bar{x})=\|\xi^{k+1}-0_{\bar{x}}\|\leq (1/s) ( \|\nabla_{\xi}   l_{R_{\bar{x}}}^{\rho_k}(\xi^{k+1},y^k)\|+\|y^k-\bar{y}\| ).
$$
Note that $x^{k+1} = R_{\bar{x}}(\xi^{k+1})$ and the sequence $\{ \xi^k \}$ remains in the closed set $\mathcal{W}$. 
There exists  $L > 0$ such that
	\begin{equation}\label{eq:alm-stopping-tangent}
	 \|\nabla_{\xi}   l_{R_{\bar{x}}}^{\rho_k}(\xi^{k+1},y^k)  \| \leq L \|\operatorname{grad}_x l^{\rho_k} (x^{k+1}, y^k ) \| \leq Lc \|y^{k+1}-y^k \|.
\end{equation}
Hence, we further obtain
	$$
\begin{aligned}
	s^2d^2(x^{k+1},\bar{x})\leq L^2c^2 \|y^{k+1}-y^k \|^2+  \|y^{k}-\bar{y} \|^2\leq L^2c^2\Big( \dfrac{ \|y^{k+1}-\bar{y} \|}{ \|y^{k}-\bar{y} \|}+1\Big)  \|y^{k}-\bar{y} \|^2+ \|y^{k}-\bar{y} \|^2.
\end{aligned}
$$
The Q-linear convergence of $\{ y^k \}$ implies $
\limsup_{k}{ \|y^{k+1}-\bar{y} \|}/{ \|y^{k}-\bar{y} \|}=r$
for some $0 < r < 1$.
%
Therefore, $ x^k $ converges to $\bar{x}$ R-linearly at the same rate.
\qed
	\end{proof}
	
In Theorem \ref{thm:conv-rate-ralm}, the local R-linear convergence of  $\{x^k\}$ is established given 
that the multiplier sequence $\{y^k\}$ converges Q-linearly. As shown in the proof of Proposition \ref{pro:converge-alm}, the sequence $\{y^k\}$ can be interpreted as being generated by PPA in the Euclidean setting. Therefore, its convergence properties can be analyzed directly using \cite[Theorem 3.2]{R22-ext}. If the condition (\ref{eq:alm-app-mani}c) is satisfied in each iteration, then $\{y^k\}$ converges to $\bar{y}$  Q-linearly.

	\begin{remark}
    	As shown in \cite{ZBD22},  the primal dual convergence rate of RALM can be established under perturbation properties of the KKT solution mappings, which require unique multipliers. In contrast, our analysis under manifold variational sufficiency yields only local primal R-linear convergence rate but removes the uniqueness requirement, extending applicability to broader settings.

	\end{remark}
	
	\begin{remark}
	In Section~\ref{sec:rsqp}, we establish that the RSQP method achieves local linear convergence under robust isolated calmness and hemistability. A similar convergence behavior can also be shown for  RALM  through the perturbation properties of the KKT solution mapping, using the error bound condition in Proposition~\ref{pro:errorbound}. 
    As shown in \cite{ZBD22}, the local $Q$-linear convergence of RALM can be established under the robust isolated calmness of the KKT solution mapping. 
    This  calmness can be verified in practice via  the M-SOSC and M-SRCQ conditions, by Proposition~\ref{pro:man-licq+ssosc-perturb}.
    Again by Proposition~\ref{pro:man-licq+ssosc-perturb},  M-SSOSC combined with constraint nondegeneracy is equivalent to the strong regularity of the KKT solution mapping when $\operatorname{epi}(\theta)$ is a polyhedral convex set, a second-order cone, or a semidefinite cone, which also guarantees linear convergence of RALM.
    Actually, the local linear convergence of ALM under the SSOSC and constraint nondegeneracy is classical in the Euclidean setting, as shown in \cite{SSZ08} for NLSDP and in \cite{LZ08} for NLSOC. 
	\end{remark}

	Now we  discuss the convergence rate of  the KKT residual. It follows from \cite[Theorem 3.1]{R22-ext} that the conditions \eqref{eq:alm-app-tangent} can be achieved by
        \begin{equation}\label{eq:alm-app-tangent-grad}
            \sqrt{{\tilde{\rho}_k}} \|\nabla_{\xi}  l_{R_{\bar{x}}}^{\rho_k}  (\xi^{k+1}, y^k ) \| \leq \begin{cases}\text { (a)}  & \varepsilon^{\prime}_k, \\ \text { (b})  & \varepsilon^{\prime}_k \min  \{1, \|\tilde{\rho}_{k} \nabla_y    l_{R_{\bar{x}}}^{\rho_k} (\xi^{k+1}, y^k ) \| \}, \\ \text { (c})  & \varepsilon^{\prime}_k \min \big\{1, \|\tilde{\rho}_{k}\nabla_y    l_{R_{\bar{x}}}^{\rho_k} (\xi^{k+1}, y^k ) \|^2\big\},\end{cases}
        \end{equation}	
	where $\varepsilon^{\prime}_k=\varepsilon_k\sqrt{s}$ as $s$ is the modulus of the strong convexity of $ l_{R_{\bar{x}}}^{\rho_k} (\cdot, y^k )$.  
    Moreover, {combining \eqref{eq:alm-app-tangent-grad} with (\ref{eq:alm-stopping-tangent}), the conditions \eqref{eq:alm-app-mani} can be achieved by}
		\begin{equation}\label{eq:alm-app-mani-grad}
			\sqrt{{\tilde{\rho}_k}} \|\operatorname{grad}_x l^{\rho_k} (x^{k+1}, y^k )  \|\leq \begin{cases}\text { (a) } & \varepsilon^{\prime\prime}_k, \\ \text { (b) } & \varepsilon^{\prime\prime}_k \min  \{1, \|\tilde{\rho}_{k} \nabla_y l^{\rho_{k}} (x^{k+1}, y^k ) \| \}, \\ \text { (c) } & \varepsilon^{\prime\prime}_k \min \big\{1, \|\tilde{\rho}_{k}\nabla_y l^{\rho_{k}} (x^{k+1}, y^k ) \|^2\big\}.\end{cases}
		\end{equation}
	 where $\varepsilon^{\prime\prime}_k=\varepsilon_k\sqrt{s}/L$ as $L$ is the  modulus given in  (\ref{eq:alm-stopping-tangent}). 
    The following proposition establishes the R-linear convergence of the KKT residual $\{E (x^{k+1}, y^{k+1} ) \}$ defined in \eqref{kktresidual}. Its proof follows the argument of \cite[Proposition 4.7]{WDZZ22}, with the projection step replaced by a proximal mapping in our setting. For brevity, we omit the details.

	\begin{proposition}\label{pro:kkt-r-linear}
Assume that  the manifold strong variational sufficient condition for problem~\eqref{eq:problem-mani} under the retraction $R_{\bar{x}}$ holds with respect to $(\bar{x}, \bar{y})$ at level $\rho > 0$. Let $\{(x^k,y^k)\}$ be the sequence generated by Algorithm~\ref{alg:ralm} under (\ref{eq:alm-app-mani-grad}b).
 If $\sqrt{{\tilde{\rho}_k}} \varepsilon_k<1$  for sufficiently large $k$, then there exists $L_g > 0$ such that
		$$
	E (x^{k+1}, y^{k+1} ) \leq w^k \operatorname{dist} (Y^k, Z ),
		\mbox{ where } w^k= (\varepsilon_k^{\prime\prime} \sqrt{\tilde{\rho}_k}+(1+{\rho}+L_g {\rho}) (\tilde{\rho}_k )^{-1} ) (1-\sqrt{\tilde{\rho}_k} \varepsilon_k )^{-1}.
        $$
	\end{proposition}

\subsection{Semismooth Newton method for RALM subproblem}\label{sec:semismooth}
For solving the RALM subproblem \eqref{eq:alm-subproblem} efficiently, we adopt the globalized semismooth Newton method on Riemannian manifolds \cite{ZBDZ21}. We begin by recalling the definition of the generalized covariant derivative for vector fields on manifolds from \cite{OF20}.

\begin{definition}
	 Let $X$ be a locally Lipschitz vector field on $\mathcal{M}$. The B-derivative of $X$  is a set-valued map $\partial_B X: \mathcal{M} \rightrightarrows \mathcal{L}(T \mathcal{M})$:
     	$$
	\partial_B X(x)=\{H \in \mathcal{L} (T_x \mathcal{M} )\mid\exists \{x^k \} \subseteq \mathcal{D}_X, \   x^k \to x,  \   \nabla X (x^k ) \to  H \}, \ x\in \mathcal{M},
	$$
	where the convergence $\nabla X (x^k ) \to  H$ means that $ \|\nabla X (x^k ) [P_{x x^k} v ]-P_{x x^k} H v \| \rightarrow 0$ for all $v \in T_x \mathcal{M}$. The Clarke generalized covariant derivative of $X$ is a set-valued map $\partial X: \mathcal{M} \rightrightarrows \mathcal{L}(T \mathcal{M})$ such that $\partial X(x)$ is the convex hull of $\partial_B X(x)$.
\end{definition}

If a retraction $R_x$ additionally satisfies $ \frac{\mathrm{D}^2}{\mathrm{~d} t^2} R(t \xi)|_{t=0}=0$ for any $\xi \in T_x \mathcal{M}$, where $\frac{\mathrm{D}^2}{\mathrm{~d} t^2} \gamma$ denotes the acceleration of a curve $\gamma$, then $R_x$ is called a second-order retraction. Such retractions can be interpreted as  approximations of exponential mappings and include, for example,  the polar retraction on the Stiefel manifold and the projective retraction on fixed rank manifolds; see \cite[Example 4.12]{AM12} for more details. In this section, we assume the retraction is second-order.

The globalized semismooth Newton method for solving the RALM subproblem (\ref{eq:alm-subproblem}),  given in \cite[Algorithm 4.1]{ZBDZ21},  aims to solve the equation  $\operatorname{grad}_x l^{\rho_k}(x,y^k)=0$ for $x$  in the $k$-th iteration. Let $R_{\bar{x}}$ be a second-order retraction.  Suppose that $\theta$ is either a polyhedral convex function or is the indicator function of a second-order cone or a positive semidefinite cone. Then, by Theorem \ref{thm:strong-vari-suff-ssosc}, the M-SSOSC is equivalent to the positive definiteness of the elements in the Hessian bundle of $l^{\rho}_{R_{\bar{x}}}(0_{\bar{x}},\bar{y})$ for sufficiently large $\rho$. Moreover, we shall show that this condition also suffices to guarantee the superlinear convergence of the semismooth Newton method.

\begin{lemma}\label{lem:equi-general-jacobian}
Let $(\bar{x},\bar{y}) \in \mathcal{M}\times \mathbb{Y}$ be a KKT pair of problem~\eqref{eq:problem-mani} satisfying \eqref{eq:man-kkt-origin}. 
Assume that the  manifold $\mathcal{M}$ is an embedded submanifold, and  the retraction $R_{\bar{x}}$ is second-order. For any $\rho>0$, we have $\partial \nabla_{\xi}l_{R_{\bar{x}}}^{\rho}(0_{\bar{x}},\bar{y})=\partial \operatorname{grad}_xl^{\rho}\left(\bar{x},\bar{y}\right)$.
\end{lemma}
\begin{proof}
   For any $G\in \partial \operatorname{grad}_xl^{\rho}\left(\bar{x},\bar{y}\right)$, we claim that $G\in \partial \nabla_{\xi}l_{R_{\bar{x}}}^{\rho}(0_{\bar{x}},\bar{y})$, or equivalently, 
   $$
   \lim_{\xi\to 0_{\bar{x}}}  \|\nabla_{\xi\xi}^2l_{R_{\bar{x}}}^{\rho} (\xi,\bar{y} ) v -Gv \|= 0,\quad \forall\ v\in T_{\bar{x}}\mathcal{M}. 
   $$ 
   Denote $l^{\rho}(\cdot,\bar{y})$ by $h(\cdot)$.
   Since $h_{R_{\bar{x}}}(\xi)=h\circ R_{\bar{x}}(\xi)$, for any $v\in T_{\bar{x}}\mathcal{M}$ we have
   \begin{equation*}
 	\begin{aligned}
   		&\langle \nabla^2h_{R_{\bar{x}}}({\xi})v,v \rangle =\dfrac{d^2}{dt^2}(h\circ R_{\bar{x}})(\xi+tv) \Big|_{t=0}\\
   		=& \dfrac{d}{dt}\Big[\dfrac{d}{dt}h(R_{\bar{x}}(\xi+tv))\Big] \Big|_{t=0}
   		=\dfrac{d}{dt}Dh(R_{\bar{x}}(\xi+tv))\Big[\dfrac{d}{dt}R_{\bar{x}}(\xi+tv)\Big] \Big|_{t=0}\\
   		=&\langle\dfrac{D}{dt}\operatorname{grad}h(R_{\bar{x}}(\xi+tv)),DR_{\bar{x}}(\xi)v\rangle+\langle\operatorname{grad}h(R_{\bar{x}}(\xi+tv)),\dfrac{D^2}{dt^2}R_{\bar{x}}(\xi+tv)\rangle \Big|_{t=0}\\
   		=& \underset{(*)}{\underbrace{\langle\nabla\operatorname{grad}h(R_{\bar{x}}(\xi))[DR_{\bar{x}}(\xi)v],DR_{\bar{x}}(\xi)v\rangle}}
        +\underset{(**)}{\underbrace{\langle\operatorname{grad}h(R_{\bar{x}}(\xi+tv)),\dfrac{D^2}{dt^2}R_{\bar{x}}(\xi+tv)\rangle \Big|_{t=0}}}.
   	\end{aligned}
   \end{equation*}
   By the definition of retraction and $\mathcal{M}$ is an embedded submanifold, for any $v\in T_{\bar{x}}\mathcal{M}$, we have
   $DR_{\bar{x}}(\xi)v=D\operatorname{exp}_{\bar{x}}(\xi)v+O(\|\xi\|)$ when $\xi\to 0_{\bar{x}}$.
   Therefore, the first term of the last equation is 
   \begin{align*}
     (*)   = & \langle\nabla\operatorname{grad}
     h(R_{\bar{x}}(\xi))[D\operatorname{exp}_{\bar{x}}(\xi)v],D\operatorname{exp}_{\bar{x}}(\xi)v\rangle+O(\|\xi\|)\\
        = & \langle\nabla\operatorname{grad} h(R_{\bar{x}}(\xi))[P_{\bar{x}R_{\bar{x}}(\xi)}v],P_{\bar{x}R_{\bar{x}}(\xi)}v\rangle+O(\|\xi\|).
   \end{align*}
   And the second term is zero $(**)=0$
   if {$\xi\to 0_{\bar{x}}$} and $t\to 0$ as $R_{\bar{x}}$ is  a second-order retraction. Thus, note that the parallel transport is isometry, for any $v\in T_{\bar{x}}\mathcal{M}$,
   $$
   \begin{aligned}
   	\big\langle\nabla^2h_{R_{\bar{x}}}\big(\xi\big)v-Gv,v\big\rangle
   	= &\;\big\langle P_{\bar{x}R_{\bar{x}}(\xi)}\nabla^2h_{R_{\bar{x}}}\big(\xi\big)v-\operatorname{Hess}h\big(R_{\bar{x}}(\xi)\big)\big[P_{\bar{x}R_{\bar{x}}(\xi)} v\big],P_{\bar{x}R_{\bar{x}}(\xi)}v\big\rangle\\
   	&+\big\langle\operatorname{Hess}h\big(R_{\bar{x}}(\xi)\big)\big[ P_{\bar{x}R_{\bar{x}}(\xi)}v\big]-P_{\bar{x}R_{\bar{x}}(\xi)}Gv,P_{\bar{x}R_{\bar{x}}(\xi)}v\big\rangle,\\
   \end{aligned}
   $$
   which converges to $0$ when $\xi\to 0_{\bar{x}}$. 
   {The arbitrary taken $v$ implies that}
   $
   \lim_{\xi\to 0_{\bar{x}}} \big\|\nabla^2h_{R_{\bar{x}}}\big(\xi\big)v-Gv\big\|= 0.
   $
   Similarly, $\partial \nabla_{\xi}l_{R_{\bar{x}}}^{\rho}(0_{\bar{x}},\bar{y})\subseteq \partial \operatorname{grad}_xl^{\rho} (\bar{x},\bar{y} )$. The equality is proved and the proof is completed.
   \qed
\end{proof}


Now, combining Lemma~\ref{lem:equi-general-jacobian} with \cite[Theorem 4.3]{ZBDZ21}, we obtain the following local convergence result for the globalized semismooth Newton method.

\begin{theorem}\label{thm:ssn-convergence}
Assume that  the manifold strong variational sufficient condition for problem~\eqref{eq:problem-mani} under the retraction $R_{\bar{x}}$ holds with respect to $(\bar{x}, \bar{y})$ at level $\rho > 0$.
Let  $R_{\bar{x}}$ be a second-order retraction and {$\{x^j\}$} be the sequence generated by the semismooth Newton method  \cite[Algorithm 4.1]{ZBDZ21}.
 Suppose there exists $\delta > 0$ such that the level set $\Omega := \{x \in \mathcal{M} \mid l^{\rho_k}(x, y^k) \leq l^{\rho_k}(x^0, y^k) + \delta\}$ is compact. {Denote $\hat{x}$ to be any accumulation point of $\{x^j\}$.} If $ \operatorname{grad}_xl^{\rho_{k}} (\cdot,y^k )$ is $\nu$-th order semismooth at {$\hat{x}$} 
 with respect to $ \partial \operatorname{grad}_xl^{\rho_{k}}\left(\cdot,y^k\right)$, then we have the convergence $x^k \rightarrow \hat{x}$, 
 and $\hat{x}$ is optimal to subproblem (\ref{eq:alm-subproblem}). Moreover, for sufficiently large $j$, it holds
$$
d (x^{j+1}, \hat{x} ) \leq O\big(d (x^j, \hat{x} )^{1+\min \{\nu, \bar{\nu}\}}\big),
$$
where $\bar{\nu} \in(0,1]$ is the parameter defined in {\cite[Algorithm 4.1]{ZBDZ21}.}
\end{theorem}
\begin{proof}
	By Theorem~\ref{thm:strong-vari-suff-ssosc}, the manifold strong sufficiency ensures the positive definiteness of all elements in $\partial \nabla_{\xi} l_{R_{\bar{x}}}^{\rho_j}(0_{\bar{x}}, \bar{y})$.  Then by Lemma~\ref{lem:equi-general-jacobian}, all elements in $\partial \operatorname{grad}_x l^{\rho_j}(\bar{x}, \bar{y})$ are also positive definite. Thus the conditions in \cite[Theorem 4.3]{ZBDZ21} are satisfied,  implying the local superlinear convergence of the globalized semismooth Newton method. The proof is  completed. \qed
\end{proof}

\begin{remark}\label{rm:ssosc-ssn-positive}
Based on Theorem \ref{thm:strong-vari-suff-ssosc} and Lemma \ref{lem:equi-general-jacobian},  the positive definiteness of the generalized Hessian of the augmented Lagrangian function is equivalent to the manifold strong variational sufficient condition of problem (\ref{eq:problem-mani}) at the KKT 
point. This equivalence underscores the pivotal role of manifold strong variational sufficiency in ensuring the efficiency of the semismooth Newton method for solving the RALM subproblem.
\end{remark}

\section{Numerical experiments}\label{sec:numerical}
In this section, we present numerical experiments on the fixed-rank manifold under the M-SOSC and M-SRCQ conditions, and on the Stiefel manifold under the M-SSOSC condition. These experiments illustrate the local convergence rates established in Theorems~\ref{thm:con-rsqp} and \ref{thm:conv-rate-ralm}.
The codes were implemented in Matlab (R2021b), and all experiments were performed on a 64-bit MacOS system with an Intel Cores i5 2.4GHz CPU and 16GB of memory.

\subsection{Solving robust matrix completion by  RSQP}
We apply RSQP (Algorithm~\ref{alg:rsqp}) to the robust matrix completion (RMC) problem proposed in \cite{CA16}.  Given $A\in \mathbb{R}^{m\times n}$, let $g(X)=P_{\Omega}(X-A)$ and $\theta(\cdot)=\mu\|\cdot\|_{1}$, where $\Omega$ is an index set and $P_{\Omega}$ is the projection operator defined by $\left(P_{\Omega}(X)\right)_{i j}=X_{i j}$ if $(i, j) \in \Omega$ and $0$ otherwise. We let $\mathcal{M}=Fr(m,n,r):=\{X\in \mathbb{R}^{m\times n}\mid\operatorname{rank}(X)=r\}$, and the RMC problem can be formulated as
\begin{equation}\label{eq:rmc-full}
\min \  \left\|P_{\Omega}(X-A)\right\|_{1} \quad
\text { s.t. } \quad  X \in Fr(m,n,r).
\end{equation}
As shown in \cite{V13}, the tangent and normal spaces of $\mathcal{M}$ at a point $X$ with the singular value decomposition $X=USV^{\top}$ are, respectively, given by
\begin{align*}
T_{X} \mathcal{M}=
\left\{\left[\begin{array}{ll}
U & U_{\perp}
\end{array}\right]\left[\begin{array}{cc}
\mathbb{R}^{r \times r} & \mathbb{R}^{r \times(n-r)} \\
\mathbb{R}^{(m-r) \times r} & 0^{(m-r) \times(n-r)}
\end{array}\right]\left[\begin{array}{ll}
V & V_{\perp}
\end{array}\right]^{\top}\right\},  \\
N_{X} \mathcal{M}=
\left\{\left[\begin{array}{ll}
U & U_{\perp}
\end{array}\right]\left[\begin{array}{cc}
0^{r \times r} & 0^{r \times(n-r)} \\
0^{(m-r) \times r} & \mathbb{R}^{(m-r) \times(n-r)}
\end{array}\right]\left[\begin{array}{ll}
V & V_{\perp}
\end{array}\right]^{\top}\right\},
\end{align*}
and the projection of a matrix $Y\in\mathbb{R}^{m\times n}$ onto the tangent space $T_{X} \mathcal{M}$ is given by $
\Pi_{X}(Y)=P_{U} Y P_{V}+P_{U}^{\perp} Y P_{V}+P_{U} Y P_{V}^{\perp}$. The Lagrangian of (\ref{eq:rmc-full}) is given by $L(X,y) = \left\langle P_{\Omega}(X-A), y\right\rangle$. And its KKT conditions are
\begin{equation}\label{eq:rmc-kkt}
		\Pi_{X} P_{\Omega} (y) =0, \quad
		y \in \partial\|	P_{\Omega}(X-A)\|_{1}.
\end{equation}
By \cite[Section 3]{HLWY20}, the Hessian of a function $f$ on $\mathcal{M}$ at $X=U\Sigma V^{\top}$ is given by
$$
\operatorname{Hess}f(X)\xi=U \hat{M} V^{\top}+\hat{U}_p V^{\top}+U \hat{V}_p^{\top},\quad   \xi \in T_X \mathcal{M},
$$
where 
$$
	\begin{aligned}
		\hat{M}&=M\left(\nabla^2 f(X)[\xi]; X\right),  M(Z ; X)=U^{\top} Z X,\\
		\hat{U}_p&=U_p\left(\nabla^2 f(X)[\xi] ; X\right)+P_U^{\perp} \nabla f(X) V_p(\xi ; X)/\Sigma, & U_p(Z ; X)=P_U^{\perp} Z V,\\
		\hat{V}_p&=V_p\left(\nabla^2 f(X)[\xi] ; X\right)+P_V^{\perp} \nabla f(X) U_p(\xi; X) / \Sigma, & V_p(Z ; X)=P_V^{\perp} Z^{\top} U.
	\end{aligned}
$$
Let $(\betterbar{X},\bar{y})$ be a KKT pair satisfying \eqref{eq:rmc-kkt}. Since $\nabla_X L(\betterbar{X},\bar{y})=P_{\Omega}(\bar{y})$ and $\nabla_{XX}^2 L(\betterbar{X},\bar{y})=0$, it holds that
$$
\begin{aligned}
	\langle \xi, \operatorname{Hess}_{X} L(\betterbar{X},\bar{y})\xi \rangle &= \langle \xi,P_U^{\perp}P_{\Omega}(\bar{y})P_V^{\perp}\xi^{\top}U\Sigma^{-1}V^{\top}+U\Sigma^{-1}V^{\top}\xi^{\top}P_U^{\perp}P_{\Omega}(\bar{y})P_V^{\perp} \rangle\\
	&=2\operatorname{tr} ( \xi^{\top}P_U^{\perp}P_{\Omega}(\bar{y})P_V^{\perp}V^{\perp}\xi^{\top}U\Sigma^{-1}V^{\top} )=2\operatorname{tr} ( \xi^{\top}P_{\Omega}(\bar{y})\xi^{\top}U\Sigma^{-1}V^{\top} ),
\end{aligned}
$$
where the last equality is obtained by the KKT condition (\ref{eq:rmc-kkt}) that $\Pi_{\betterbar{X}}P_{\Omega}(\bar{y})=0$. Moreover, by the definition of $G$, 
$Dg(X)\xi=g^{\prime}(X)\xi=P_{\Omega}(\xi)$, for any $\xi\in T_{X}\mathcal{M}$. We can further obtain that $\mathcal{C}_{\theta, g}(X, y)=\left\{ d\in\mathbb{R}^{m\times n}\mid \theta^{\downarrow}(P_{\Omega}(X-A) ; d) = \left\langle d,y \right\rangle \right\} $, in which
$$
{\theta^{\downarrow}(P_{\Omega}(X-A) ; d)}
= \sum_{P_{\Omega}(X-A)_{ij}=0}|d_{ij}|+\sum_{P_{\Omega}(X-A)_{ij}>0}d_{ij}-\sum_{P_{\Omega}(X-A)_{ij}<0}d_{ij}.
$$
Therefore, {the M-SOSC} for  problem (\ref{eq:rmc-full}) holds at the KKT pair $(\betterbar{X}, \bar{y})$  if 
$$ 
\operatorname{tr}\left( \xi^{\top}P_{\Omega}(\bar{y})\xi^{\top}U\Sigma^{-1}V^{\top}\right)>0, \mbox{ for any } \xi\in T_{\betterbar{X}}\mathcal{M} \mbox{ satisfying } P_{\Omega}(\xi)\in \mathcal{C}_{\theta, g}(\betterbar{X}, \bar{y})\backslash\{0\}.
$$


Next we consider a toy example such that  the M-SOSC and M-SRCQ conditions can be verified. We take $m=n=5$, $r=3$, and  $\Omega$ as the full index set in problem~\eqref{eq:rmc-full}. Let
\[
U= \begin{bmatrix}
	1& 0 &0 &0&0\\
	0 &-{\sqrt{2}}/{2}&{\sqrt{2}}/{2}&0&0 \\
	0  &{\sqrt{2}}/{2}&{\sqrt{2}}/{2}&0&0
\end{bmatrix}^{\top}, \quad 
V= \begin{bmatrix}
	1& 0 &0 &0&0\\
	0 &0.6&-0.8&0&0 \\
	0  &0.8&0.6&0&0
\end{bmatrix}^{\top},  \quad \mbox{and} \quad
S= \begin{bmatrix}
	1&&\\&2&\\&&3
\end{bmatrix}.
\]
Set $A_{\text{ex}} = USV^{\top}$. We then add noise by defining  $E_{\text{out}}$ as a matrix with Gaussian entries at positions $(i,j)\in\{4,5\} \times \{4,5\}$ and $0$  elsewhere, and set $A = A_{\text{ex}} + E_{\text{out}}$. 

We take $\betterbar{X} = A_{\text{ex}}$ and $\bar{y}_{ij} = -\operatorname{sgn}((E_{\text{out}})_{ij})$. It can be  verified that $(\betterbar{X}, \bar{y})$ satisfies the KKT conditions~\eqref{eq:rmc-kkt}. Next we show that  the M-SOSC and  M-SRCQ conditions hold at $(\betterbar{X}, \bar{y})$.
First, we can compute the critical cone 
$
\mathcal{C}_{\theta, g}(\betterbar{X}, \bar{y}) = \{ d\in \mathbb{R}^{m\times n} \mid d_{ij}\in \mathbb{R} \;\text{if}\; (E_{\text{out}})_{ij}\neq 0, \; d_{ij}=0\;\text{if}\; (E_{\text{out}})_{ij}=0\}.
$
The sparsity pattern of $E_{\text{out}}$ implies that only the zero element $\xi=0_{\betterbar{X}}$ satisfies $P_{\Omega}(\xi) \in \mathcal{C}_{\theta, g}(\betterbar{X}, y)$. Therefore, the M-SOSC holds at $(\betterbar{X}, \bar{y})$.
Moreover, observe that $Dg(\betterbar{X})T_{\betterbar{X}}\mathcal{M} = T_{\betterbar{X}}\mathcal{M}$. Thus, the M-SRCQ condition holds if there exists a multiplier $\bar{y}$ 
such that
$
N_{\betterbar{X}}\mathcal{M} \subseteq \mathcal{C}_{\theta, g}(\betterbar{X}, \bar{y}).
$
For any multiplier $y$ satisfying $Dg(\betterbar{X})^*y=0$, we have $y \in N_{\betterbar{X}}\mathcal{M}$ and thus ${y} = P_U^{\perp} Y P_V^{\perp}$ for some $Y\in \mathbb{R}^{m\times n}$. For any $d = P_U^{\perp} \widetilde{Y} P_V^{\perp} \in N_{\betterbar{X}}\mathcal{M}$, it is easy to see that  there exists a  multiplier ${y} = P_U^{\perp} Y P_V^{\perp}$ satisfying
\begin{align*}
	\sum_{i,j\in\{4,5\}}-\operatorname{sgn}((E_{\text{out}})_{ij})\widetilde{Y}_{ij} &= \sum_{i,j\in\{4,5\}}-\operatorname{sgn}((E_{\text{out}})_{ij})d_{ij} 
	= \theta^{\downarrow}(g(\betterbar{X}); d) \\
	&=  \langle d, y \rangle 
	=  \langle P_U^{\perp} \widetilde{Y} P_V^{\perp}, P_U^{\perp} Y P_V^{\perp}  \rangle 
	= \sum_{i,j\in\{4,5\}} \widetilde{Y}_{ij} Y_{ij}.
\end{align*}
Therefore, $d \in \mathcal{C}_{\theta, g}(\betterbar{X}, \bar{y})$, and the M-SRCQ condition holds at $\betterbar{X} = A_{\text{ex}}$ with respect to $y$.

\begin{figure}[htbp]
	\centering
    \subfloat[$m=n=5$, $r=3$]{\includegraphics[width=0.24\linewidth, trim=10 0 15 20, clip]{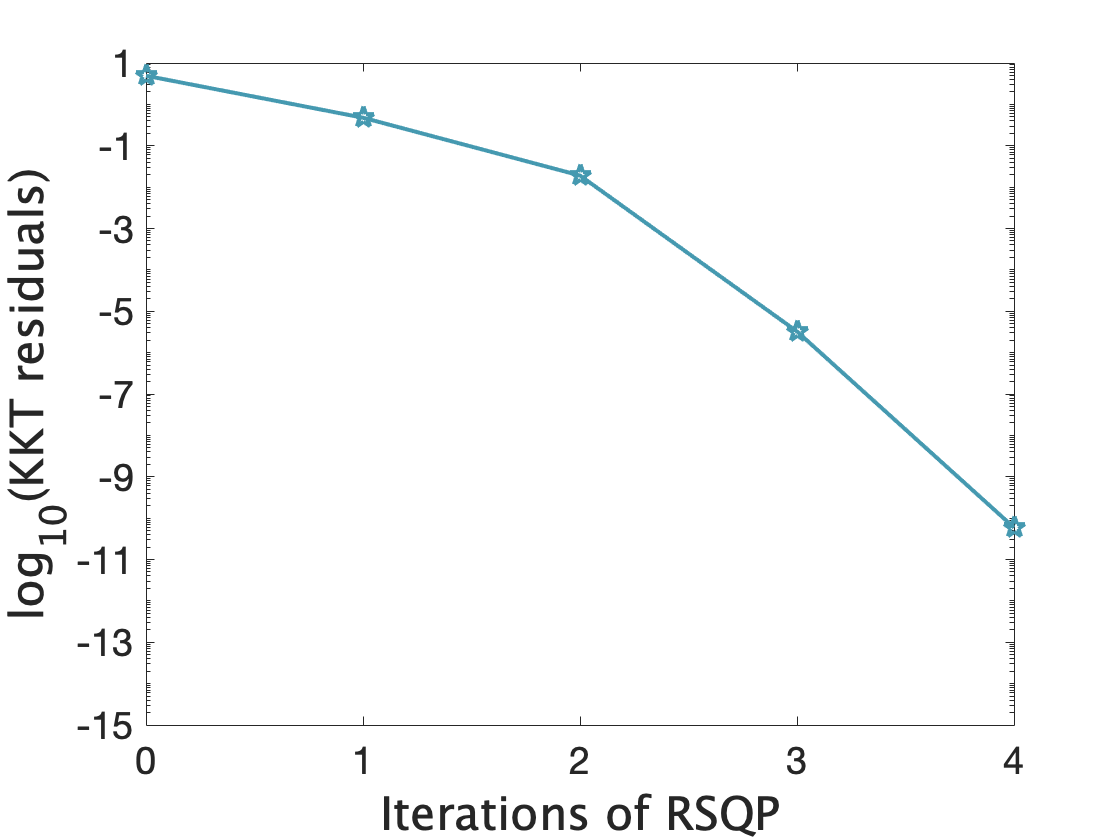}}
	\subfloat[$m=n=50$, $r=6$]{\includegraphics[width=0.24\linewidth, trim=10 0 15 20, clip]{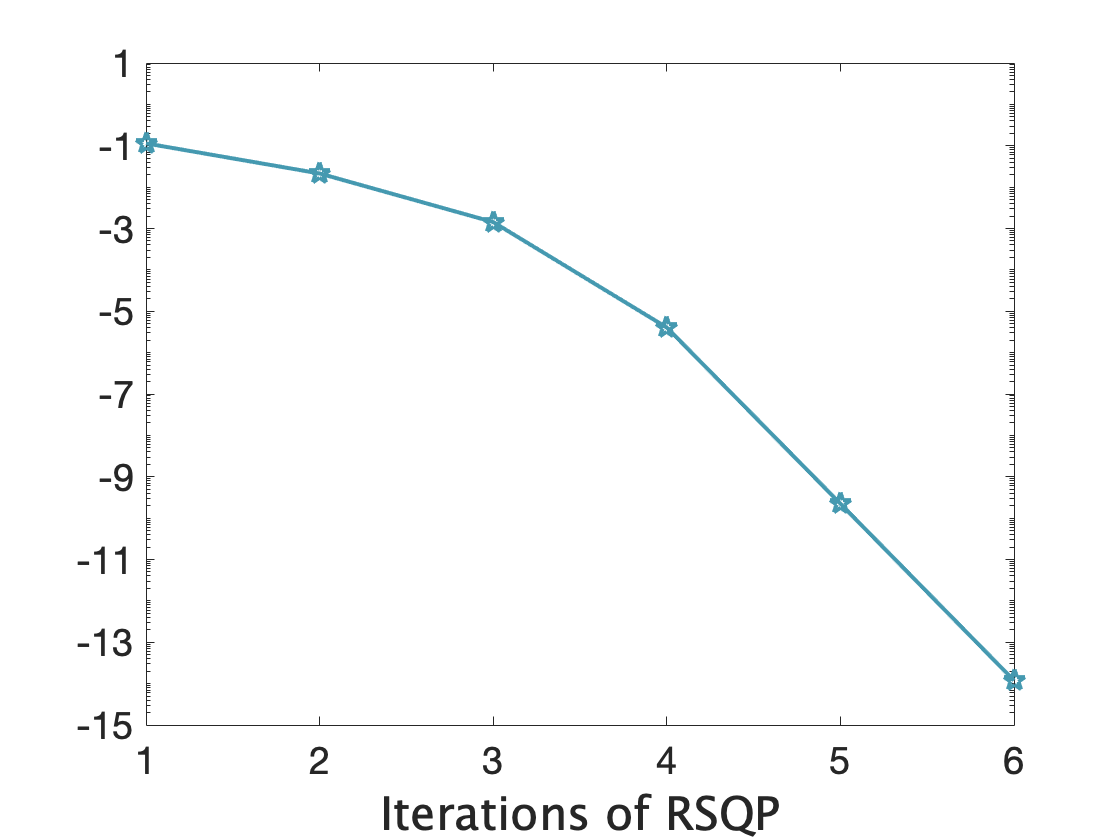}} 
	\subfloat[$m=n=100$, $r=8$]{\includegraphics[width=0.24\linewidth, trim=10 0 15 20, clip]{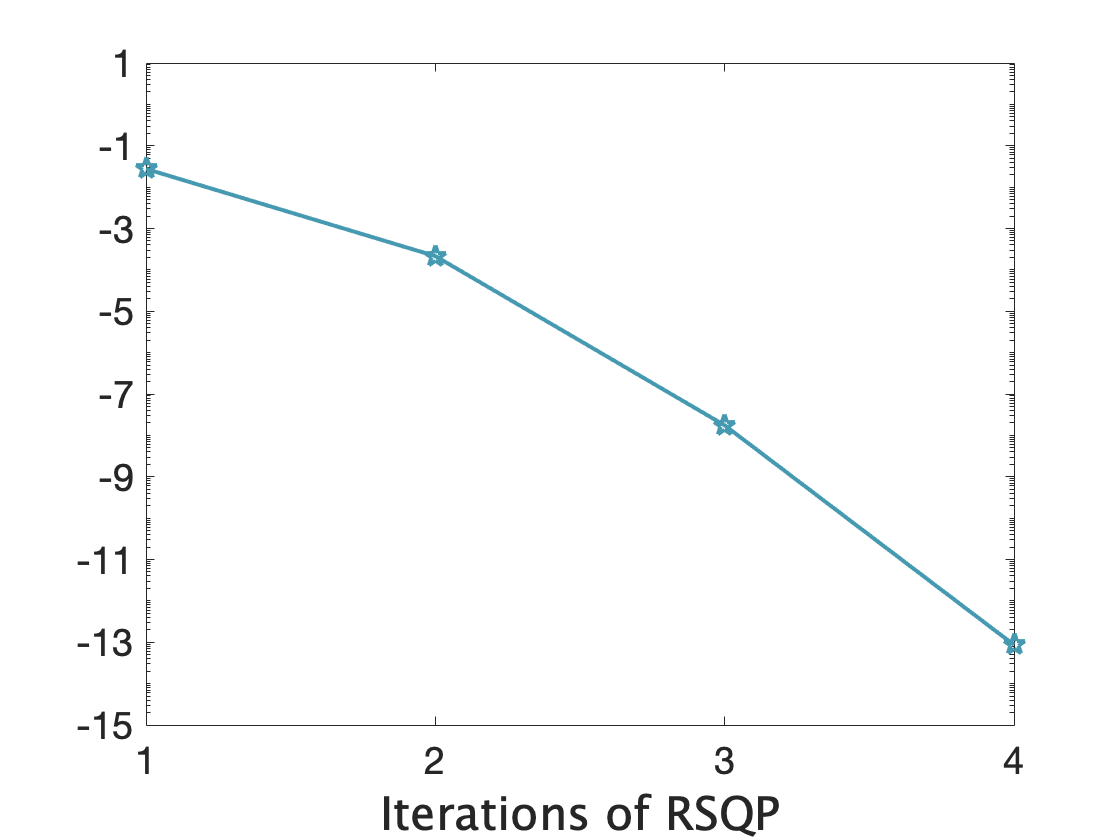}}
	\subfloat[$m=n=200$, $r=10$]{\includegraphics[width=0.24\linewidth, trim=10 0 15 20, clip]{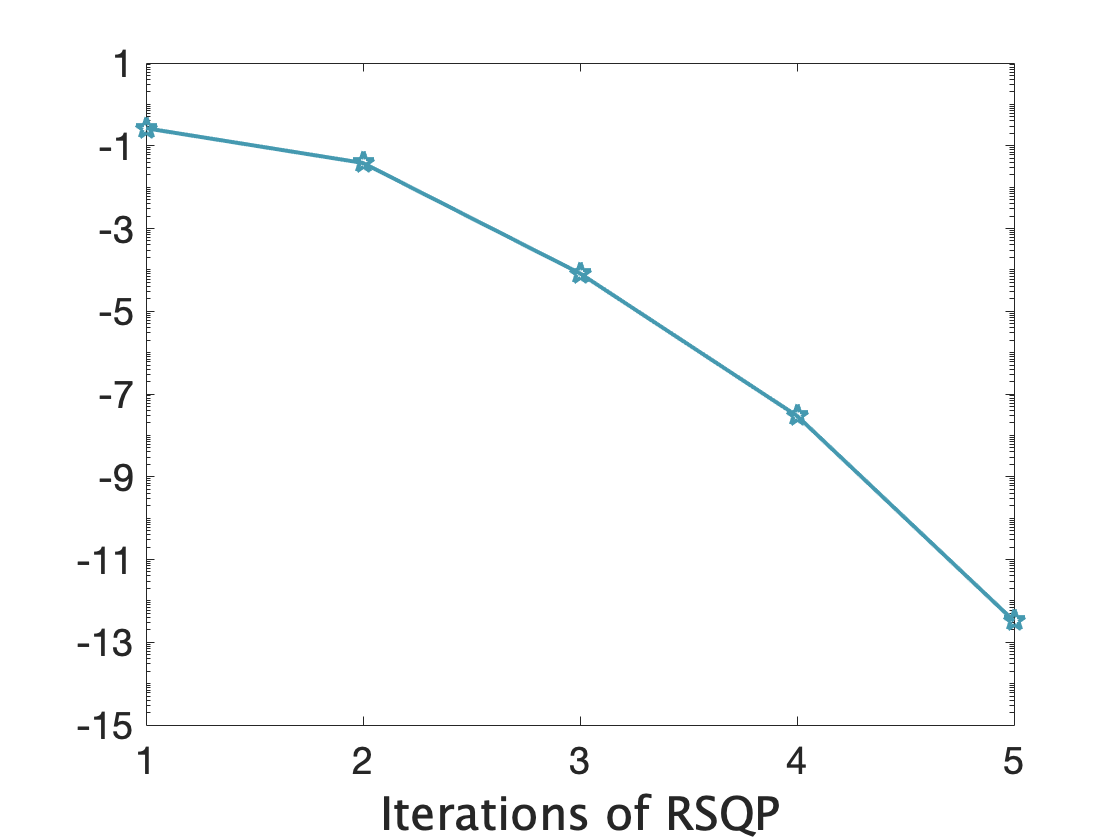}}
	\caption{Superlinear decay of the KKT residuals 
    (see \eqref{kktresidual}) across RSQP iterations (Algorithm~\ref{alg:rsqp}) for the RMC problem \eqref{eq:rmc-full}: (a) toy instance; (b-d) random instances of varying dimensions.}
	\label{fig:rmcall}
\end{figure}
\begin{table}[htbp]
	\centering
	\caption{KKT residuals 
    (see \eqref{kktresidual}) and prediction errors $\|X^k-A\|$ across RSQP iterations (Algorithm~\ref{alg:rsqp}) for random instances of the RMC problem \eqref{eq:rmc-full}.}
	\label{table:rsqp-vertical}
	\begin{tabular}{c c S[table-format=1.2e-1] S[table-format=1.2e-1]}
		\toprule
		Problem size & Iteration $k$ & {KKT residuals} & {Errors $\|X^k-A\|$} \\
		\midrule
		\multirow{6}{*}{$m=n=50,\,r=6$}
		& 1 & 1.14e-1 & 1.39e-1 \\
		& 2 & 2.14e-2 & 1.49e-2 \\
		& 3 & 1.42e-3 & 6.57e-4 \\
		& 4 & 4.04e-6 & 1.78e-6 \\
		& 5 & 2.23e-10 & 5.91e-10 \\
		& 6 & 1.19e-14 & 6.18e-14 \\
		\midrule
		\multirow{4}{*}{$m=n=100,\,r=8$}
		& 1 & 2.76e-2 & 7.79e-2 \\
		& 2 & 2.15e-4 & 2.54e-4 \\
		& 3 & 1.75e-8 & 1.24e-8 \\
		& 4 & 8.62e-14 & 5.52e-14 \\
		\midrule
		\multirow{5}{*}{$m=n=200,\,r=10$}
		& 1 & 2.65e-1 & 1.69e-1 \\
		& 2 & 3.84e-2 & 5.01e-2 \\
		& 3 & 8.23e-5 & 3.95e-5 \\
		& 4 & 2.97e-8 & 1.06e-8 \\
		& 5 & 3.26e-13 & 1.69e-12 \\
		\bottomrule
	\end{tabular}
\end{table}

We apply RSQP (Algorithm~\ref{alg:rsqp}) for solving this toy instance of  the RMC problem \eqref{eq:rmc-full}. Theoretically,  by  Proposition \ref{pro:poly-delta-hemi2}, the  M-SOSC and M-SRCQ conditions ensure the superlinear convergence rate of RSQP (Algorithm~\ref{alg:rsqp}). This is confirmed numerically in Figure~\ref{fig:rmcall}(a), which displays the superlinear decay of the KKT residuals 
defined in \eqref{kktresidual}.

We further test larger problem sizes with dimensions $(m,n,r) = (50,50,6)$, $(100,100,8)$, and $(200,200,10)$.   The random matrix $A\in \mathbb{R}^{m\times n}$ is generated as $A=LR^{\top}$, where $L\in \mathbb{R}^{m\times r}$ and $R\in \mathbb{R}^{n\times r}$ have entries independently sampled from the uniform distribution on $[0,1]$. The observation set $\Omega$ is formed by including each index independently with probability $p=0.3$.
Algorithm~\ref{alg:rsqp} is implemented  based on a slightly modified version of the method in \cite{OOT22}, where each subproblem is solved under an orthonormal basis of the tangent space.  Since Algorithm~\ref{alg:rsqp} does not include a globalization strategy, we use a single iteration of RALM to produce the initial point before applying RSQP. We report the averaged performance over 10 independent replications of each experiment.

Figure~\ref{fig:rmcall}(b-d) demonstrates the superlinear decay of the KKT residuals 
defined in \eqref{kktresidual}. Correspondingly,
Table~\ref{table:rsqp-vertical} reports both the KKT residuals and the prediction errors of the iterates from the true matrix $\|X^k-A\|$.
Although it is difficult to verify the M-SOSC and M-SRCQ conditions in these instances, the results in Figure~\ref{fig:rmcall} and Table~\ref{table:rsqp-vertical} provide clear empirical evidence of superlinear convergence. Moreover, Table~\ref{table:rsqp-vertical} shows that the iterates $X^k$  recover the truth  $A$ almost  exactly, with errors between $10^{-14}$ and $10^{-12}$. This high accuracy is likely due to the fact that no noise is added to the low-rank true matrix $A$ in these experiments.

\subsection{Solving compressed modes by  RALM}

We apply RALM (Algorithm~\ref{alg:ralm}) to the  compressed modes (CM) problem proposed in \cite{OLCO13}.
Let $f(X)=\operatorname{tr}(X^{\top}HX)$, $\theta(\cdot)=\mu\|\cdot\|_1$, $g(X)=X$, and $\mathcal{M}= \operatorname{St}(n,r) :=\{X\in \mathbb{R}^{n \times r}\mid X^{\top}X=I_r\}$,
Let $H$ be a discretization of the Hamilton operator. Then the CM problem can be formulated as
\begin{equation}\label{eq:cm}
\min \  \operatorname{tr}(X^{\top}HX)+\mu\left\|X\right\|_{1} \quad \text { s.t. } \quad  X \in \operatorname{St}(n,r).
\end{equation}
The tangent and normal spaces of $\mathcal{ M }$ at a point $X$ are, respectively, given by
\begin{equation}\label{eq:stiefel-tangent}
\begin{aligned}
    T_X \mathcal{ M }&=\left\{\xi \in \mathbb{R}^{n \times r}\mid X^{\top}\xi+\xi^{\top}X=0\right\},\\
    N_X \mathcal{M}&=\left\{ XA \mid A=A^{\top} \right\},
\end{aligned}
\end{equation}
and the the projection of a matrix $Y\in\mathbb{R}^{n\times r}$ onto the tangent space $T_X \mathcal{ M }$ is given by $\Pi_X (Y)=Y-X\operatorname{sym}(X^{\top}Y)$, where 
$
\operatorname{sym}(Z) := (Z + Z^{\top} )/2
$ for any $Z$.
The Lagrangian of (\ref{eq:cm}) is given by $L(X,y) =\operatorname{tr}(X^{\top}HX) + \langle X, y \rangle$. And its KKT conditions are
\begin{equation}\label{eq:cm-kkt}
\Pi_{X}( 2HX+y )=0, \quad
y \in \mu\partial\|X\|_{1}.
\end{equation}
The Euclidean gradient of $L(X,y)$ is $\nabla_{X} L(X,y)= 2HX+y$. And the Euclidean Hessian  of  $L(X,y)$ is $\nabla_{XX}^2 L(X,y)\xi= 2H\xi$ for any $\xi\in T_X\mathcal{ M }$. Therefore, by \cite{HLWY20}, the Riemannian gradient and Hessian of $L(X,y)$ can be computed as
$$
\operatorname{grad}_{X}L(X,y) = \Pi_{X}( 2HX+y ),\quad
\operatorname{Hess}_{X} L(X,y)\xi =  \Pi_{X}(2H\xi-\xi \operatorname{sym}(X^{\top}\nabla_{X}L(X,y) ).
$$
By the KKT condition (\ref{eq:cm-kkt}), we have $2HX+y\in N_X\mathcal{ M}$, which implies that  $2HX+y=XS$ for some symmetric matrix $S$. Therefore, for any $\xi\in T_{X}\mathcal{ M }$, 
\begin{equation*}
	\begin{aligned}
	 \langle \xi, \operatorname{Hess}_{X}L(X,y)\xi\rangle
	&= \langle \xi,2H\xi-\xi \operatorname{sym}(X^{\top}\nabla_{X}L(X,y)  \rangle \\
	&= \langle \xi,2H\xi  \rangle - \langle \xi,2\xi X^{\top}HX \rangle - \langle \xi,\xi \operatorname{sym}(X^{\top}y)  \rangle\\
    &=  \langle \xi,2H\xi  \rangle - \langle \xi,2\xi X^{\top}HX \rangle - \langle \xi,\xi \operatorname{sym} ( X^{\top}(XS-2HX) )  \rangle \\
&=\operatorname{tr}(\xi^{\top}H\xi)-\operatorname{tr}(\xi^{\top}\xi S).
	\end{aligned}
\end{equation*}
We further obtain that $\mathcal{ C}_{\theta,g}(X,y)=\{d\in \mathbb{R}^{n\times r}\mid\theta^{\downarrow}(X;d)= \langle d,y  \rangle\}$, in which
$$
{\theta^{\downarrow}(X ; d)}= \sum_{X_{ij}=0}|d_{ij}|+\sum_{X_{ij}>0}d_{ij}-\sum_{X_{ij}<0}d_{ij}.
$$
Its affine hull is 
\begin{equation}\label{aff_c_mc}
\operatorname{aff}\mathcal{ C}_{\theta,g}(X,y)= \{ d\in \mathbb{R}^{n\times r}\mid d_{ij}=0\;\text{if}\; y_{ij}\neq \pm \mu ,\;d_{ij}\in \mathbb{R} \;\text{if}\; y_{ij}=\pm \mu \}.
\end{equation}
Therefore, the M-SSOSC for  problem \eqref{eq:cm} holds at the KKT pair $(\betterbar{X}, \bar{y})=(\betterbar{X}, -2H\betterbar{X}+\betterbar{X}\bar{S})$ if
$$
\operatorname{tr}(\xi^{\top}H\xi)-\operatorname{tr}(\xi^{\top}\xi \bar{S})>0, \mbox{ for any }  \xi\in T_{\betterbar{X}}\mathcal{M} \mbox{ satisfying } \xi\in \operatorname{aff}\mathcal{C}_{\theta, g}(\betterbar{X}, \bar{y})\backslash\{0\}.
$$

In \cite{ZBDZ21}, the authors formulate the CM problem in the context of solving the Schrödinger equation for a one-dimensional free-electron model with periodic boundary conditions:  
\begin{equation*}
	-\tfrac{1}{2} \Delta \phi(x) = \lambda \phi(x), \quad x \in [0,50].
\end{equation*}
Their numerical results indicate that the smallest eigenvalue of $H_k \in \partial \operatorname{grad}_{X} L^{\rho_{k}}(X^k,y^k)$ remains strictly positive, suggesting that the M-SSOSC condition may hold in this setting. To further illustrate this conjecture, we present a simple example below.
Consider the Schrödinger equation on the domain $[0,2]$ with periodic boundary conditions. We discretize $[0,2]$ into $n=4$ grid points and denote $H$ by the discrete approximation of $-\tfrac{1}{2}\Delta$. In this case, $H$ takes the form  
$
H = -\begin{bmatrix}
-4 & 2 & 0 & 2 \\
2 & -4 & 2 & 0 \\
0 & 2 & -4 & 2 \\
2 & 0 & 2 & -4
\end{bmatrix}.
$
We take $\mu<5\sqrt{2}$, $r=2$, 
$
\betterbar{X}=\begin{bmatrix}
0&0&\sqrt{2}/2&\sqrt{2}/2\\
\sqrt{2}/2&\sqrt{2}/2&0&0
\end{bmatrix}^{\top}, \mbox{ and }
\bar{y}=\mu\begin{bmatrix}
0&0&1&1\\
1&1&0&0
\end{bmatrix}^{\top}.
$
It can be verified that $(\betterbar{X},\bar{y})$ satisfies the KKT conditions \eqref{eq:cm-kkt}.
And for $\bar{S}=\begin{bmatrix}
-4+\sqrt{2}\mu&4\\
4&-4+\sqrt{2}\mu
\end{bmatrix}$, it holds that $2H\betterbar{X}+\bar{y}=\betterbar{X}\bar{S}$.
Next we show that the M-SSOSC condition holds at  $(\betterbar{X},\bar{y})$. By \eqref{aff_c_mc}, if follows that 
$$
\operatorname{aff}\mathcal{ C}_{\theta,g}(\betterbar{X},\bar{y})=\bigg\{ \begin{bmatrix}
0&0&c_1&c_2\\
c_3&c_4&0&0
\end{bmatrix}^{\top}\Big|\;c_i\in \mathbb{R},i=1,2,3,4 \bigg\}.
$$
For any $\xi\in T_{\betterbar{X}}\mathcal{ M}$  satisfying   $\xi\in \operatorname{aff}\mathcal{ C}_{\theta,g}(\betterbar{X},\bar{y})$, (\ref{eq:stiefel-tangent}) implies that $\xi$ takes the form 
$$
\xi=\begin{bmatrix}
0&0&c_1&-c_1\\
c_2&-c_2&0&0
\end{bmatrix}^{\top},\quad c_1,c_2 \in \mathbb{R},
$$ and thus for $\mu<5\sqrt{2}$,
$
	\operatorname{tr}(\xi^{\top}H\xi)-\operatorname{tr}(\xi^{\top}\xi \bar{S})=12(c_1^2+c_2^2)-2(-4+\sqrt{2}\mu)(c_1^2+c_2^2)=(20-2\sqrt{2}\mu)(c_1^2+c_2^2)>0.
$
Therefore the M-SSOSC holds at $(\betterbar{X},\bar{y})$ given $\mu<5\sqrt{2}$. 

We set $\mu=0.8$ and apply RALM (Algorithm~\ref{alg:ralm}) for solving this toy instance of the CM problem \eqref{eq:cm}. Figure~\ref{fig:cmfigure}(a) illustrates the linear convergence rate of the KKT residuals defined in \eqref{kktresidual}, which confirms the theoretical results in Theorem~\ref{thm:conv-rate-ralm}.

We further evaluate larger problem sizes by discretizing the domain $[0,50]$ into $n$ nodes with $n=200,\,500,\,1000$, setting $r=20$ and $\mu=1$, and taking $H$ as the discretized version of $-\tfrac{1}{2}\Delta$. Algorithm~\ref{alg:ralm} is implemented following  \cite{ZBDZ21}. Figure~\ref{fig:cmfigure}(b-d) demonstrates the linear decay of the KKT residuals based on $10$ independent replications. In parallel, Table~\ref{table:cm} reports the smallest eigenvalues of all generalized Hessians arising in the algorithm, from $5$ independent replications of each experiment. While direct verification of the M-SSOSC condition remains challenging in these instances and thus theoretical guarantees on  eigenvalue positivity are absent, we still observe in Table~\ref{table:cm} that the generalized Hessians arising in the semismooth Newton method  consistently exhibit positive eigenvalues. This might suggest that the M-SSOSC condition may indeed hold in these instances.
\begin{figure}[htbp]
	\centering
    \subfloat[$n=4,r=2$]{\includegraphics[width=0.24\linewidth]{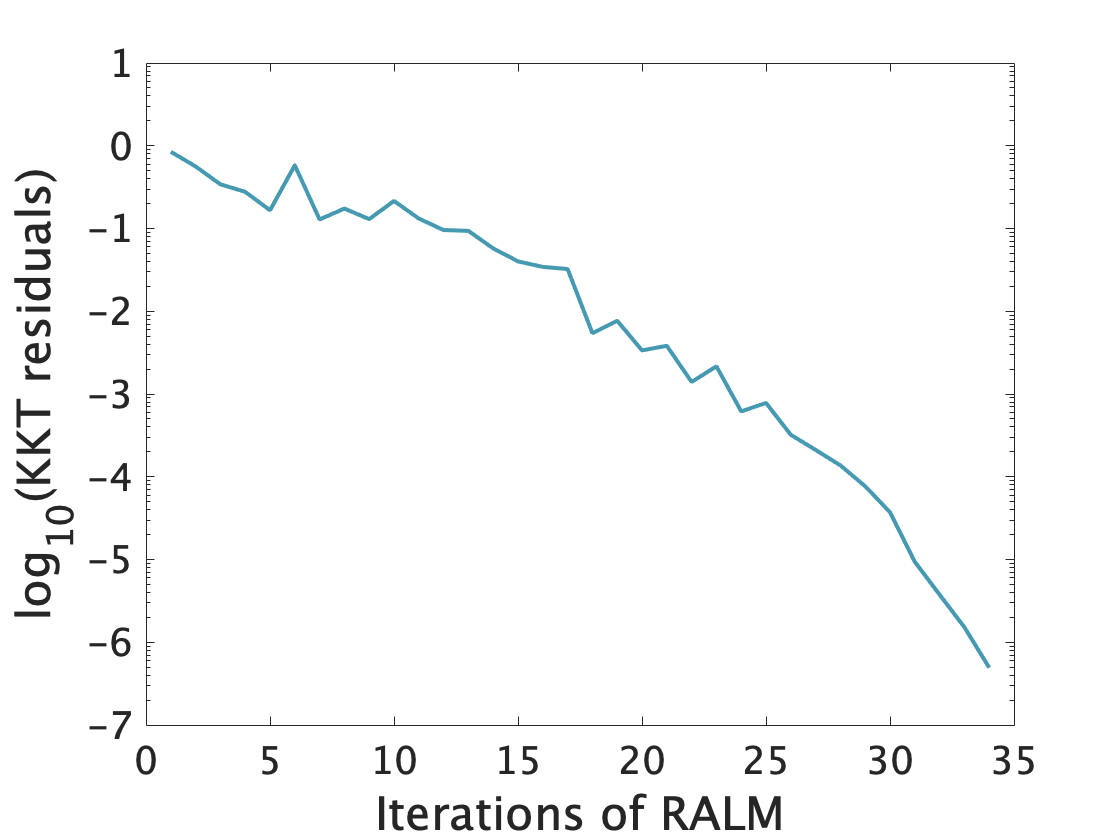}}
	\subfloat[$n=200,r=20$]{\includegraphics[width=0.24\linewidth]{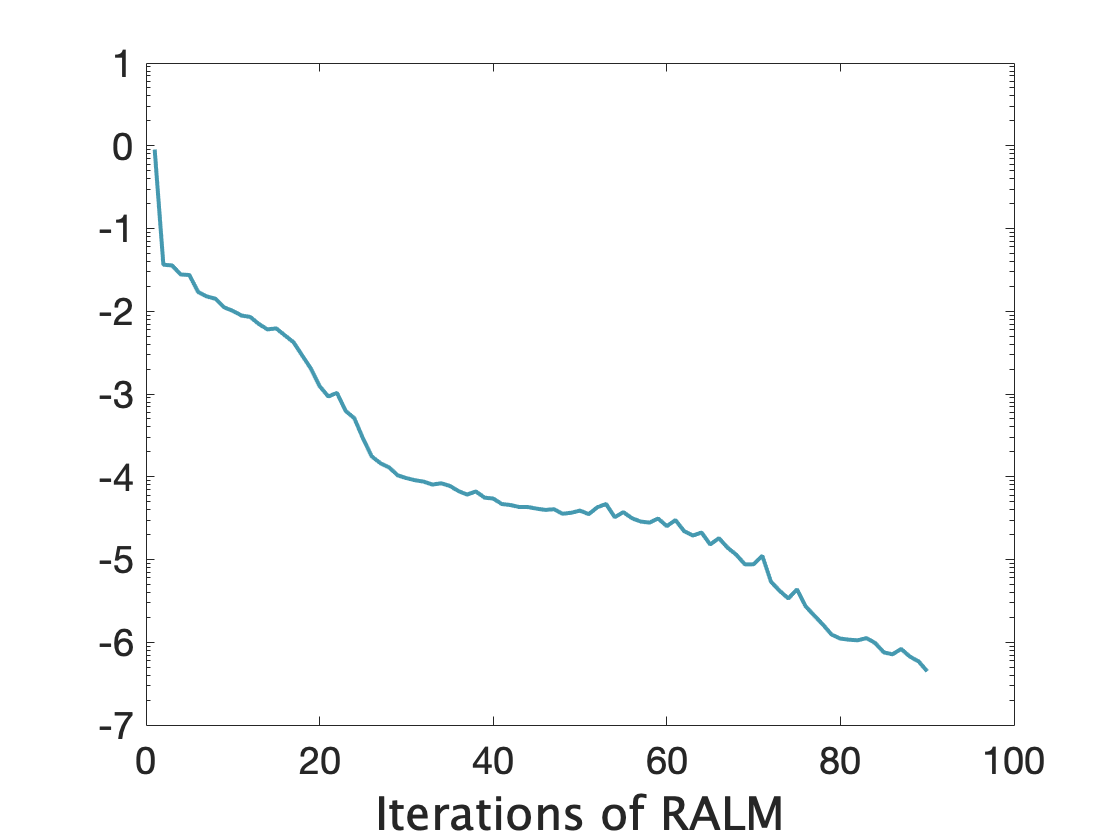}}
	\subfloat[$n=500, r=20$]{\includegraphics[width=0.24\linewidth]{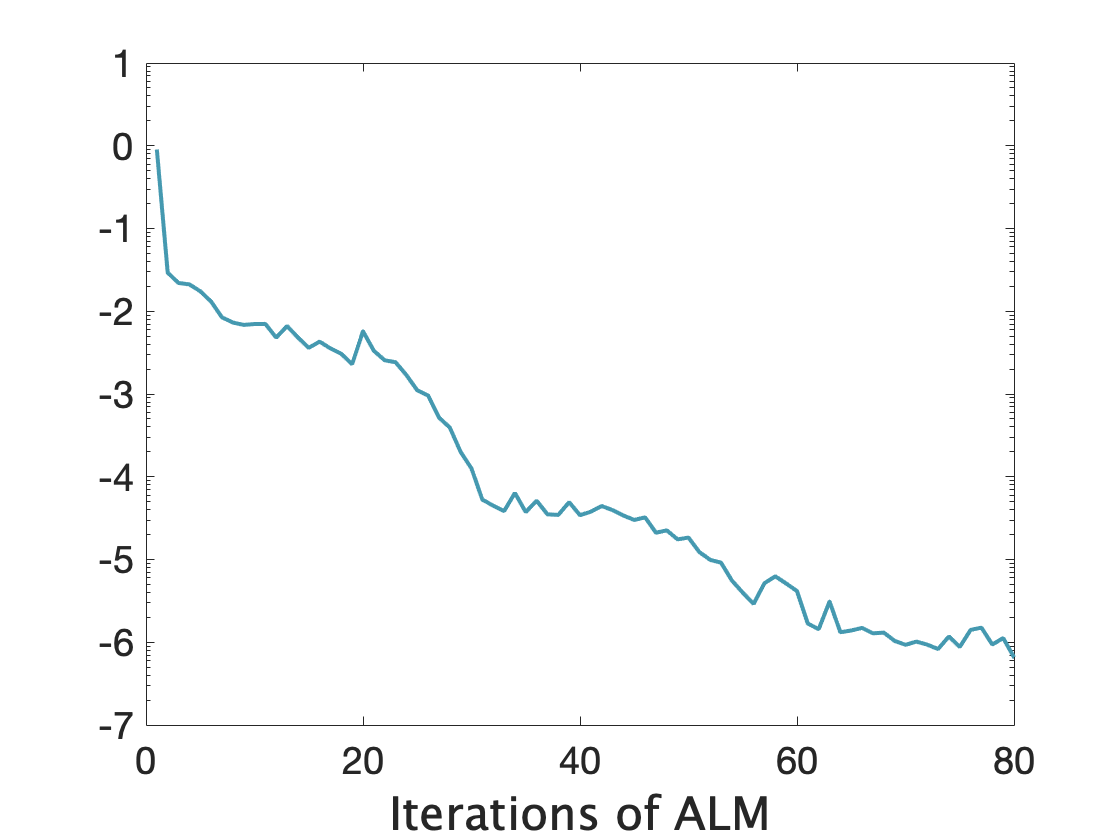}}
		\subfloat[$n=1000, r=20$]{\includegraphics[width=0.24\linewidth]{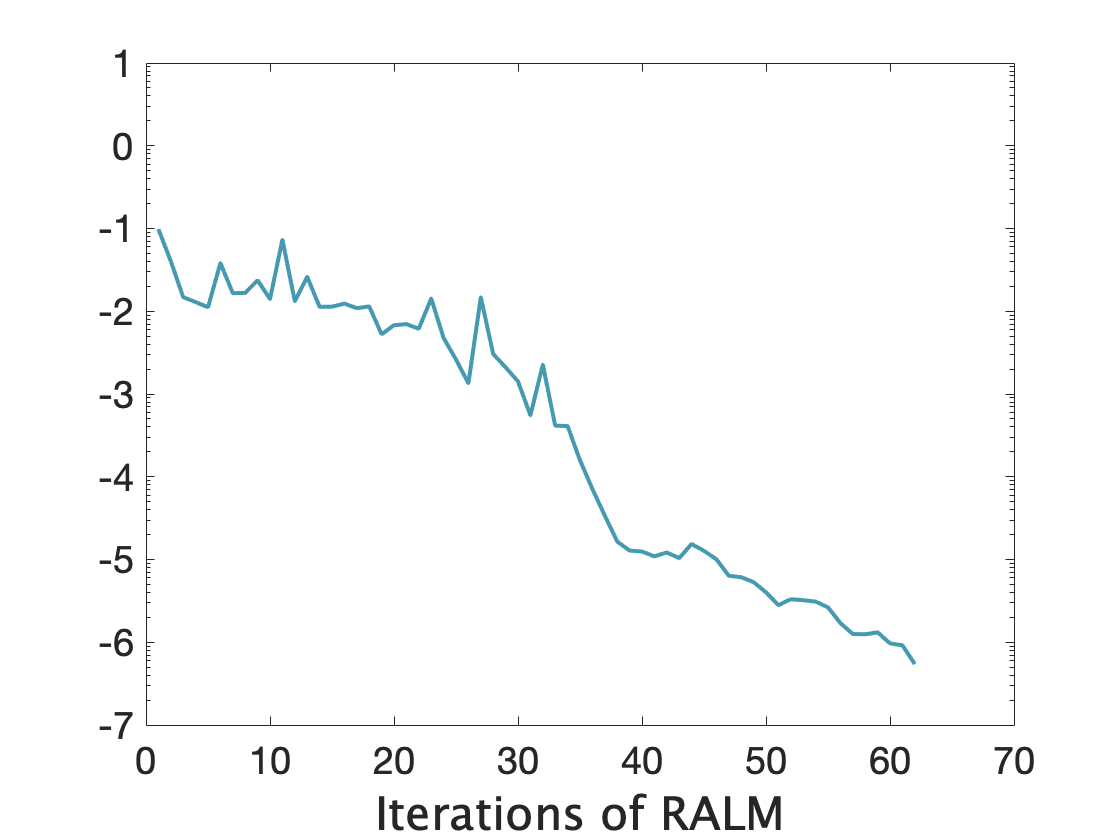}}
	\caption{Linear decay of the KKT residuals 
    (see \eqref{kktresidual}) across RALM iterations (Algorithm~\ref{alg:ralm}) for the CM problem \eqref{eq:cm}: (a) toy instance; (b-d) random instances of varying dimensions.}
	\label{fig:cmfigure}
\end{figure}
\begin{table}[ht]
	\centering
	\caption{Smallest eigenvalues of all generalized Hessians arising in the semismooth Newton based RALM (Algorithm~\ref{alg:ralm}) for random instances of the CM problem \eqref{eq:cm}, based on $5$ independent replications. }
	\label{table:cm}
	\begin{tabular}{c c|c c c c c}
		\toprule
		$n$ &  $r$  & \multicolumn{5}{c}{smallest eigenvalues}\\
		\midrule
		$200$  & $20$ &$0.30$ &$0.32$& $0.19$ &$0.28$ &$0.14$\\
		\midrule
		$500$ & $20$ &$0.28$& $0.11$ & $0.12$ & $0.09$ &$0.11$ \\
		\midrule
		$1000$ & $20$ & $0.03$ & $0.17$& $0.01$ & $0.17$ & $0.21$ \\
		\bottomrule
	\end{tabular}
\end{table}

\section{Conclusion}\label{sec:conclu}
In this paper, we have developed a retraction-based perturbation framework for nonsmooth optimization on connected Riemannian manifolds. By reducing the analysis to locally equivalent tangent space models, we extend fundamental regularity concepts from Euclidean variational analysis to the manifold setting and provide explicit characterizations. 
In particular, we show that the robust isolated calmness of the KKT solution mapping is equivalent to the M-SOSC and M-SRCQ conditions. {Moreover, for $\operatorname{epi} \theta$ being a convex polyhedral set,} a second-order cone, or a semidefinite cone, we establish that the strong regularity of the KKT solution mapping is equivalent to the M-SSOSC and manifold constraint nondegeneracy, which is also equivalent to the nonsingularity of every element in Clarke's generalized Jacobian of the natural mapping. 
We further introduce the notion of the manifold (strong) variational sufficiency, and clarify their role in perturbation analysis. In particular, we show for such $\theta$ the manifold strong variational sufficiency is equivalent to the M-SSOSC. These theoretical developments have facilitated the convergence analysis of nonsmooth optimization algorithms on manifolds. Specifically, under {robust} isolated calmness, the RSQP  achieves local superlinear (quadratic) convergence. Under manifold strong variational sufficiency, the RALM attains R-linear convergence of its outer iterations. Moreover, for the associated inner subproblems, all generalized Hessians arising in the semismooth Newton method are positive definite, thereby guaranteeing superlinear local convergence of the inner iterations. Numerical experiments on the robust matrix completion and compressed modes problem  confirm the predicted convergence rates.

Several challenges remain open. In the robust matrix completion problem, computing the Riemannian Hessian  is computationally expensive in our current implementation.  This motivates the development of Hessian-free or coordinate-free subproblem strategies that can more effectively exploit the underlying manifold structure. In addition, while the PPA for the primal problem and the  ALM for the dual problem are known to be equivalent in the Euclidean setting, their precise relationship under convexity assumptions on Riemannian manifolds remains unclear and warrants further investigation.

\bibliographystyle{plain}
\bibliography{main}
\end{document}